\documentclass[12pt,a4paper]{article}
 \usepackage{amsmath,amstext,amssymb,amscd}
\usepackage[english,russian]{babel}
 \usepackage{graphicx}
%for russian hyphenation

%Writing in russian
 \usepackage[T2A]{fontenc}
 \usepackage[cp1251]{inputenc}
% \usepackage[russian]{babel}
%for russian hyphenation

\newcommand{\Xcomment}[1]{}

\oddsidemargin=5mm \textwidth=155mm \textheight=231mm
\topmargin=-10mm

\newtheorem{theorem}{Теорема}
\newtheorem{lemma}{Лемма}

\newtheorem{prop}{Предложение}

  % reference to section
  % reference to subsection
  % reference to subsubsection
  % reference to Theorem

\makeatletter \@addtoreset{equation}{section} \makeatother

\newenvironment{proof}{\noindent{Доказательство.}~}%
{\hfill$\Box$\medskip}

%                         numbered textual expression

 %

  % reference to equation

\begin{document}

\title{Cubillages of cyclic zonotopes}

 \author{Vladimir I.~Danilov\thanks{Central Institute of Economics and
Mathematics of the RAS, 47, Nakhimovskii Prospect, 117418 Moscow, Russia;
email: danilov@cemi.rssi.ru.}
 \and
Alexander V.~Karzanov\thanks{Central Institute of Economics and Mathematics of
the RAS, 47, Nakhimovskii Prospect, 117418 Moscow, Russia; email:
akarzanov7@gmail.com. Corresponding author. }
  \and
Gleb A.~Koshevoy\thanks{The Institute for Information Transmission Problems of
the RAS, 19, Bol'shoi Karetnyi per., 127051 Moscow, Russia, and HSE University,
Moscow, Russia; email: koshevoyga@gmail.com. Supported in part by grant RSF
16-11-10075, and by Laboratory of Mirror Symmetry NRU HSE, RF Government grant,
ag. № 14.641.31.0001. }
 }

\date{}
\maketitle

\begin{abstract}
This paper (written in Russian) presents a survey of new and earlier results on
fine zonotopal tilings (briefly, \emph{cubillages}) of cyclic zonotopes. The
combinatorial theory of these objects is of interest in its own right and also
has a connection to higher Bruhat orders, triangulations of cyclic polytopes,
and Tamari--Stasheff posets applied in the study of Kadomtsev--Petviashvily
equations, and etc.
 \medskip

\textbf{Keywords:} higher Bruhat order, Tamari--Stasheff poset, polycategory,
rhombus tiling, separated sets, purity
\end{abstract}

\begin{flushright}
Светлой памяти Владимира \\ Воеводского
посвящается
\end{flushright}

%{\small  \renewcommand{\baselinestretch}{0.4} \tableofcontents }
\tableofcontents

\addcontentsline{toc}{section}{Введение}

\section*{Введение}

Замощение фигур другими фигурами -- излюбленная тема геометров. Вспомним
кристаллические решетки и параллелоэдры, упаковки шаров,  мозаики Пенроуза и
связанные с этим ромбические тайлинги. Одно из      направлений, интенсивно
развивающееся в последние 30 лет, касается  триангуляций т.н. циклических
многогранников (см.  \cite{R} и  обзор Рэмбо и Райнера \cite{RR} в сборнике
\cite{ATL}). Здесь мы хотим      обсудить во многом параллельную теорию
кубильяжей циклических зонотопов, то есть разбиений зонотопов на
(комбинаторные) кубы.

Как это ни покажется странным, импульсы к рождению этой теории были не только
геометрическими. {Хотя зонотопальные (и кубические) разбиения издавна
привлекали внимание геометров (см., например, работу \cite{She}, или
исторический очерк в \cite{Zbook}), переворот вызвали две работы, скорее
алгебраические или комбинаторные.}  Одна  -- работа Леклерка и Зелевинского
\cite{LZ} 1998 года -- была связана с вопросом  квазикоммутирования квантовых
миноров и привела к изучению  ромбических тайлингов зоногонов; обзор некоторых
достижений на этом пути см. в статье \cite{UMN}. Другая  -- статья Манина и
Шехтмана \cite{MSch-1} 1986 года (тоже мотивированная квантовой тематикой)  --
была посвящена обобщению слабого порядка Брюа на      симметрической группе
$S_n$; полученные упорядоченные множества $B(n,d)$ были названы высшими
порядками Брюа. Несколько позже Воеводский и Капранов \cite{VK} дали
интерпретацию этих порядков в терминах кубильяжей циклических зонотопов,
частным (двумерным) случаем которых являются упомянутые выше ромбические
тайлинги. Значительные продвижения в этом направлении были получены Циглером
\cite{Z}, Галашиным \cite{Ga}, и Галашинм и Постниковым \cite{GaP} (см. также
\cite{Ba}).

Обо всем этом (включая наши собственные результаты) и предполагается рассказать
в настоящей работе. Мы условно делим изложение на две тесно связанные части:
геометрическую и комбинаторную (или теоретико-множественную). В  первой
рассказывается про зонотопы, кубические тайлинги  (кубильяжи) зонотопов,
вводятся разные полезные объекты в кубильяже: перегородки, тоннели, и т.п.
Здесь же вводится основной рабочий инструмент -- редукции и экспансии,
незаменимые для проведения индуктивных рассуждений.

После введения этих общих понятий мы ограничиваем свое внимание исключительно
случаем циклических (или -- более правильно -- вполне положительных) зонотопов
$Z(n,d)$. Именнно  кубильяжи таких зонотопов связаны с высшими порядками Брюа
Манина-Шехтмана. При работе внутри фиксированного кубильяжа важную роль играет
структура естественного порядка $\preceq$ на множестве его кубов. Когда же мы
переходим к сравнению различных кубильяжей, тут на первый план выходит понятие
флипа, {некоторой локальной перестройки кубильяжа}. Последнее проще всего
объяснить на примере простейшего (после куба) зонотопа $Z(d+1,d)$, т. н.
капсида. У него есть всего два кубильяжа, и замена одного кубильяжа на другой и
называется флипом. В общем случае флип -- это замена `капсидного' фрагмента
кубильяжа другим (`флипованным') фрагментом. Главный факт о  кубильяжах состоит
в том, что с помощью флипов можно перейти от любого кубильяжа зонотопа $Z(n,d)$
к любому другому.

Другое важное понятие -- понятие мембран, гиперповерхностей специального вида
внутри зоногона; мы показываем, что любую  мембрану можно вписать в кубильяж.

Комбинаторная часть статьи связывает кубильяжи с системами подмножеств
множества $[n]=\{1,2,...,n\}$, индексирующего вектора, порождающие циклический
зонотоп $Z(n,d)$.  Имеется два способа сделать это. Первый рассматривает
кубильяжи с точки зрения их \emph{спектров}. Каждый кубильяж $\mathcal Q $
зонотопа $Z(n,d)$ ($d$ --  это размерность зонотопа, тогда как $n$ -- число
`направляющих' векторов) задает систему $Sp(\mathcal Q )$ подмножеств  в
$[n]=\{1,2,...,n\}$, то есть подмножество в $2^{[n]}$. Сет-система $Sp(\mathcal
Q )$ однозначно определяет кубильяж $\mathcal Q $. Этот результат  поднимает
вопрос: какие сет-системы соответствуют кубильяжам, потому что  ответ на него
позволяет заменить геометрию комбинаторикой. Найденный Галашиным и Постниковым
\cite{GaP} ответ состоит в том, что  такие сет-системы обладают т.н. свойством
($d-1$)-\emph{разделенности} и при этом они максимальны по размеру (равному ${n
\choose \le d}$). Если максимальность по размеру можно заменить максимальностью
по включению, то говорят о \emph{чистоте} соответствующего отношения.
Фундаментальное  исследование вопросов чистоты было проведено в \cite{GaP}.
Однако чтение этой  статьи затруднено тем, что авторы работали с произвольными
(не обязательно циклическими) зонотопами, и даже с ориентированными матроидами,
и потому использовали усложненную матроидную терминологию и технику. Мы же
сознательно ограничиваемся циклическим  случаем, что делает (на наш взгляд)
изложение более простым и доступным. В случаях $d=2$ и 3 свойство чистоты было
доказано в \cite{LZ} и \cite{Ga}, соответственно. В случае $d \ge 4$ ответ
отрицателен; для этого мы подробно обсуждаем ситуацию в случае $n=d+2$, а также
переносим отсутствия чистоты на большие $n$ и $d$.

Другой переход от геометрии к комбинаторике подсказывает отображение типов. С
каждым кубом кубильяжа можно связать его \emph{тип},  некоторое подмножество
размера $d$ в множестве $[n]$. И это задает  отображение (биективное, как
несложно показать) кубильяжа в множество ${[n] \choose d}$ подмножеств размера
$d$ в $[n]$.      Пользуясь этой биекцией, можно перенести естественный порядок
$\preceq _\mathcal Q $  с $\mathcal Q $ на ${[n] \choose d}$. Оказывается, что
перенесенный порядок тоже однозначно определяет кубильяж $\mathcal Q $. Тем
самым можно вместо кубильяжей работать с классом т.н. \emph{допустимых}
порядков на `дискретном' грассманиане $Gr([n],d)={[n] \choose d}$. Так мы
возвращаемся к истоку теории -- определению высших порядков Брюа как допустимых
порядков на грассманиане ${[n] \choose d}$. На  геометрическом языке высший
порядок Брюа $B(n,d)$ -- это порядок на  множестве ${\bf Q}(n,d)$ кубильяжей
зонотопа $Z(n,d)$, или, эквивалентно, на  множестве ${\bf M}(n,d)$ мембран в
зонотопе $Z(n,d+1)$.

Кубильяжи имеют многочисленные связи с другими объектами математического мира.
Например, известна их связь с уравнениями Кадомцева-Петвиашвили, идущая пока в
основном через триангуляции циклических политопов. Уже отмечалось, что
происхождение теории было мотивировано уравнениями Замолодчикова. Кубильяжи
дают также естественный пример поликатегорий. Кубильяжи и триангуляции связаны
с представлениями колчанов {и конечномерных алгебр, см., например, работу}
\cite{OT}. О некоторых таких `внешних' связях кубильяжей мы рассказывыаем
кратко в Дополнениях. Здесь мы уже, как правило, не приводим точные
определения и результаты, указывая лишь общие контуры связи. Таких Дополнений
четыре. Это связь с поликатегориями, связь с триангуляциями (и через них с
уравнениями К-П), рассказ о слабых мембранах, обобщающих понятие мембран. Туда
же мы отнесли и доказательство теоремы об ацикличности из основного текста.

\

\

\textsc{Часть первая, геометрическая}

\

Здесь излагаются основные геометрические понятия и факты про  кубильяжи
зонотопов. Комбинаторый (теоретико-множественный) взгляд  будет дан во второй
части. Мы стараемся иллюстрировать все на 2-мерных кубильяжах (ромбических
тайлингах), иногда рисовать 3-мерные картины.

%--------------SEC 1
            \section{Зонотопы} \label{sec:intr}

Естественно начать с напоминаний про зонотопы\footnote{Имеется большая
литература про зонотопы, включая книги \cite{BVSWZ, Zbook}.}  и введения
терминов. Кубильяж -- это, грубо говоря, правильное заполнение (разбиение)
выпуклой фигуры `кубами', а точнее говоря -- параллелоэдрами.

Что же такое зонотоп? Фиксируем  вещественное векторное пространство $V$
(размерности $d>0$) и некоторый конечный набор ${\bf V}=(v_1,...,v_n)$ из $n$
векторов в нем. {\em Зонотопом} $Z=Z({\bf  V})$ (порожденным набором  ${\bf
V}$) называется  сумма по Минковскому $n$ отрезков $[0,v_i]$. Иначе говоря, $Z$
состоит  из точек $z$ вида $\sum _i \alpha_i v_i$, где $0\le \alpha _i\le 1$
для любого $i=1,...,n$. Можно также сказать, что зонотоп -- это проекция
`единичного'  $n$-мерного куба, когда базисные вектора пространства ${\mathbb
R}^n$ отправляются в $v_i$. Это выпуклое тело, симметричное относительно  точки
(центра) $\sum v_i/2$.

Зонотопом мы называем и любой сдвиг зонотопа. Заменяя, если  нужно, вектора
$v_i$ на $-v_i$, можно считать, что все они `смотрят в  одну сторону', если
угодно -- `вверх' по отношению к некоторой  `горизонтальной'  гиперплоскости.
Такая замена меняет зонотоп на сдвинутый. Но зато теперь наш зонотоп имеет
`нижнюю' (корневую) вершину 0 и `верхнюю' вершину $v_1+...+v_n$.

Простейший пример зонотопа -- это \emph{куб} (точнее -- параллелоэдр, но мы для
краткости именуем его кубом). Это когда система $v_1,...,v_n$ образует базис
пространства $V$ (в этом случае $n=d$). Далее мы будем заниматься заполнениями
зонотопов кубами.

Довольно легко описать грани зонотопа (которые тоже  зонотопы меньшей
размерности; {см. \cite[гл. 7]{Zbook} о кодировке граней знаковыми векторами}).
Мы еще более облегчим эту задачу. Во первых, мы будем считать, что система {\bf
V} находится в \emph{общем положении}, то есть что $n\ge d$ и что любые $d$
векторов из системы {\bf V} образуют базис пространства $V$. Далее это
предположение всюду подразумевается, в основном по той причине, что мы будем
заниматься кубильяжами $Z$. Во вторых, мы ограничимся описанием только фасет
(граней коразмерности 1) зонотопа $Z$. Понятно, что фасеты такого `общего'
зонотопа будут кубами размерности $d-1$.

Чтобы задать фасету, мы фиксируем произвольные $d-1$ вектор $w_1,...,w_{d-1}$
(подсистему {\bf W}) в нашей системе {\bf V}. Пусть $H_{\bf W}$ --
гиперплоскость в $V$, натянутая на {\bf W}. Она делит оставшиеся точки-вектора
{\bf V}-{\bf W} на две части, скажем, ${\bf V}_+$ и ${\bf V}_-$. Это  дает две
(противоположные друг другу) фасеты зонотопа $Z$. Одна образована
зонотопом-кубом $Z({\bf W})$, укорененным в точке $\sum  {\bf      V}_+$, а
другая --  тем же зонотопом, но укорененным в точке $\sum  {\bf V}_-$. И так
получаются  все фасеты.  Отсюда видно, что

\

\noindent (1.1) \ \ \ \ \emph{`общий' зонотоп $Z$ имеет  $2{n \choose d-1}$ фасет.}

\

Тут неявно предполагалось, что $d>1$; случай одномерного  зонотопа несколько
выпадает из общей картины по причине его тривиальности; это просто отрезок $[0,
v_1+...+v_n]$.

Стоит еще сказать о вершинах зонотопа и об их числе. Наверняка это хорошо
известно, но нам не удалось найти хорошую ссылку. Поэтому приведем формулу, а
позже скажем о доказательстве.

\

\noindent  (1.2)  \emph{Число вершин $V(n,d)$ зонотопа $Z$ равно
$2{n-1 \choose \le (d-1)}=2({n-1 \choose d-1}+...+{n-1 \choose 0})$.}

\

Другая полезная вещь -- это взгляд на зонотоп  вдоль некоторого направления.
Пусть направление задается некоторым  ненулевым вектором $w$ в $V$ и
проектирует (отображением $\pi _w$) пространство $V$ (и находящийся в нем
зонотоп $Z$) на (фактор)пространство $V'=V/{\mathbb R}w$. Снова тут удобно
считать, что этот  вектор $w$ находтся в общем положении относительно системы
{\bf V}. В этом  случае каждая фасета $F$ зонотопа $Z$ инъективно проектируется
в  пространство $V'$. А весь зонотоп $Z$ проектируется на зонотоп      $Z'=\pi
_w(Z)=Z(\pi _w({\bf V}))$. При этом граница $\partial Z'$ (имеющая размерность
$d-2$) является биективным образом некоторого (тоже ($d-2$)-мерного)
подкомплекса границы $Z$, который можно назвать \emph{ободом} (относительно
этой проекции $\pi _w$). Обод делит (нестрого) границу $Z$ на две `полусферы' -
`верхнюю' (мы представляем проекцию $\pi_w $ как вертикальную, снизу вверх) и
`нижнюю'.

Здесь удобно ввести терминологию видимых и невидимых фасет. Снова берем `общий'
вектор $w$ (вдоль которого мы смотрим на  зонотоп $Z$). Пусть $F$ - фасета $Z$.
Скажем, что эта фасета $F$ {\em видимая} (в направлении $w$), если для
(относительно внутренней) точки $p$ этой фасеты точка $p-\varepsilon w$ не
принадлежит $Z$ (для малого или  любого  $\varepsilon >0$); тогда для малого
$\varepsilon >0$ точка $p+\varepsilon w$  принадлежит $Z$. Иначе говоря, точка
$p$ первая в зонотопе при движении (из $-\infty $ в $+\infty $) по прямой
$p+{\mathbb R}w$.

В этой теминологии видимые фасеты покрывают нижнюю  полусферу зонотопа, а
невидимые - верхнюю. Проекция верхней  полусферы дает один кубильяж зонотопа $Z
'$, а нижнего -- другой, симметричный верхнему. Мы еще вернемся к этой теме в
следующем разделе.

Можно проектировать не только вдоль `общих' направлений, но и, скажем, вдоль
направления вектора $v_i$, обозначая эту проекцию как $\pi _i$. Проекция при
$\pi _i$ зонотопа $Z$ снова будет зонотопом $Z'=Z/ v_i$, порожденным (в
$V'=V/\mathbb{R}v_i$) образами всех векторов $v_1,...,v_n$, кроме $v_i$. Однако
теперь обод будет не ($d-2$)-мерный, но ($d-1$)-мерный {(и называется он
\emph{зоной} или \emph{поясом} зонотопа)}. Отметим также, что сам $Z$
получается как сумма зонотопа $Z({\bf V}-\{v_i\})$ и отрезка  $Z(v_i)=[0,v_i]$.

После введения этих понятий можно обратиться к доказательству формулы (1.2).
Зонотоп $Z=Z(\mathbf{V})$ представляется как сумма зонотопа
$\widetilde{Z}=Z(\mathbf{V}-\{v_n\})$ и отрезка $[0,v_n]$. При этом его вершины
делятся на видимые и невидимые. Но видимые вершины (относительно направления
$v_n$) зонотопа $\widetilde{Z}$ те же, что видимые вершины зонотопа $Z$;
аналогично для невидимых (точнее, видимых с противоположного направления).
Единственная разница состоит в том, что в пером случае эти два множества не
пересекаются, тогда как во втором -- пересекаются, причем в точности по
множеству вершин обода (зонотопа $\widetilde{Z}$ в направлении $v_n$.). Отсюда
мы получаем соотношение
$$
V(n,d)=V(n-1,d)+V(n-1,d-1).
$$
Учитывая правило Паскаля, остается только убедиться в справедливости формулы
(1.2) в случаях $d=1$ и $d=n$. В первом случае правая часть равна $2{n-1
\choose 0}=2$, что согласуется с тем, что отрезок имеет две вершины. Во втором
случае правая часть равна $2{d-1 \choose \le d-1}=2(2^{d-1})=2^d$, что
согласуется с тем, что $d$-мерный куб имеет $2^d$ вершин.

%--------------SEC 2
           \section{Кубильяжи }\label{sec:cubil}

Кубильяж (или кубический, гиперромбический, параллелоэдральный тайлинг) -- это правильное замощение
(разбиение, paving, tiling) зонотопа (или другого многогранника) кубами. Точнее:

{\em Кубильяж} ${\mathcal Q}$ зонотопа $Z$ -- это множество $d$-мерных кубов
(см. выше) $Q_1,...,Q_N$ (называемых плитками, тайлами или просто
\emph{кубами}), покрывающих $Z$, причем выполнены два условия:

           1) два куба могут пересекаться только по общей грани;

           2) фасеты зонотопа $Z$ являются фасетами некоторых кубов из ${\mathcal Q}$.

\noindent Точнее, так определяется кубильяж для зонотопов размерности $d>1$.
Кубильяж одномерного зонотопа по определению состоит из $n$ отрезков,
конгруэнтных отрезкам $[0,v_i]$, $i=1,...,n$.

{\em Гранью} кубильяжа считается грань некоторого куба из ${\mathcal Q}$.\medskip

\begin{figure}[h]
\begin{center}
\includegraphics[scale=.35]{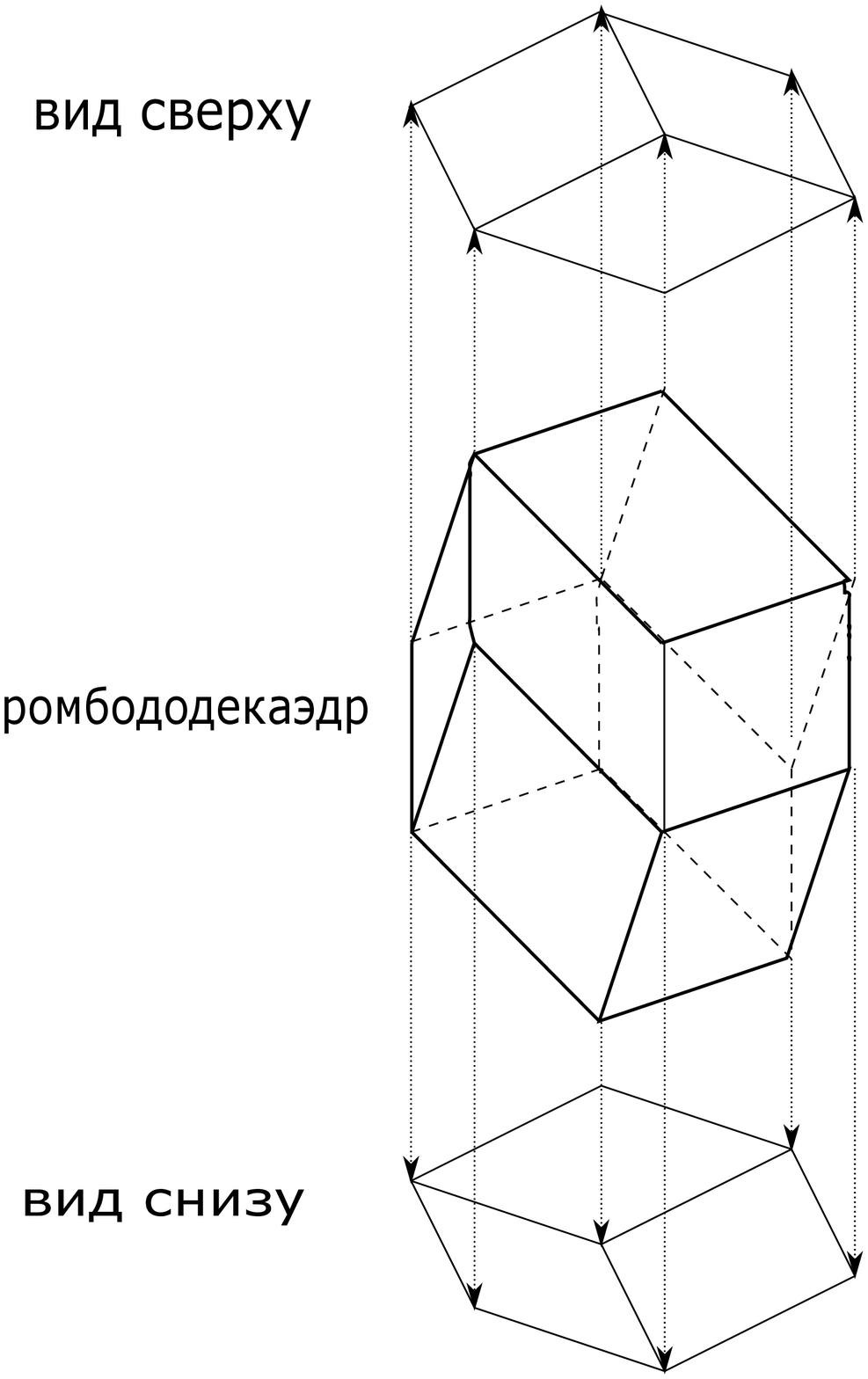}
\end{center}
 \caption{Вид сверху и снизу на ромбододекаэдр} \label{fig:1}
 \end{figure}

\textbf{Пример.} Пусть $Z$ -- зонотоп $Z(v_1,...,v_n)$, и мы проектируем  его
вдоль, скажем, направления $v_n$ (обозначая проекцию как $\pi  $, см. рис.
\ref{fig:1}). Тогда, как объяснялось в предыдущем разделе, проекция фасет
видимой  части границы $Z$ дает кубильяж зонотопа $Z'=Z(\pi (v_1),...,\pi
(v_{n-1}))$.\medskip

Часто на эту конструкцию смотрят в противоположном направлении,  говоря, что
система векторов ${\bf V}= \{v_1,...,v_n\}$ -- это `одноэлементный лифтинг'
системы ${\bf V}'=\{\pi  (v_1),...,\pi (v_{n-1})\}$ (с помощью      вектора
$v_n$). В такой терминологии одноэлементный лифтинг системы ${\bf V}'$ дает
кубильяж зонотопа $Z'=Z({\bf V}')$. Знаменитая теорема Бонэ-Дресса (см.
\cite{BVSWZ} или \cite{Zbook}) в каком-то смысле утверждает обратное. Мы  не
приводим точную формулировку, поскольку это лежит в стороне от наших
целей.\medskip

Полезным для понимания является следующий простой факт, который часто просто включают в определение кубильяжа.

\begin{lemma} \label{lm:1}
Пусть $Q$ -- куб некоторого кубильяжа ${\mathcal Q}$ зототопа $Z=Z({\bf V})$.
Тогда  любое ребро этого куба конгруэнтно некотоому отрезку $[0,v_i]$.
 \end{lemma}

\begin{proof}
Можно считать, что $d>1$. Пусть $E$ - некоторое ребро куба $Q$. Если оно не
лежит на границе зонотопа, вокруг него есть много кубов, ребра которых
конгруэтны $E$. Переходя от одного такого куба к другому, мы рано или поздно
выйдем на границу зонотопа. А тогда все следует из свойства 2) определения
кубильяжа.
\end{proof}

В частности, любой куб $Q$ кубильяжа конгруэнтен кубу-зонотопу $Z({\bf W})$,
где ${\bf W}$ -- $d$-элементное подмножество в $\mathbf{V}$. Множество индексов
(или \emph{цветов}; так мы понимаем элементы индексирующего множества
$[n]=\{1,...,n\}$) этого  подмножества ${\bf W}$ называется {\em типом} этого
куба. Иными словами, тип куба  $Q$ -- это подмножество $\{i_1,...,i_d\}$ в
$[n]$, такое что куб $Q$  есть сдвиг куба-зонотопа $Z(v_{i_1},...,v_{i_d})$.

Аналогично  понимается тип грани кубильяжа. В частности, тип ребра кубильяжа
-- одноэлементное подмножество $\{i\}$ в $[n]$.  Мы ориентируем каждое  ребро
кубильяжа в направлении вектора $v_i$. Направленный путь из нижней вершины 0 в
верхнюю вершину $v([n])$,  идущий по ребрам кубильяжа, мы называем {\em
змейкой}. Как мы увидим  далее, в змейке ребро цвета $i$ встречается ровно один
раз. Если  $c(i)$ -- цвет $i$-го ребра-стрелки некоторой змейки, то мы получаем
биекцию $[n]$ в $[n]$, то есть перестановку множества $[n]$. Эту перестановку
можно понимать также как линейный порядок на $[n]$, в котором минимальным
элементом яаляется цвет первого ребра змейки.

Сопоставление кубу $Q$ кубильяжа ${\mathcal Q}$ его типа  задает отображение
$\tau =\tau _{\mathcal Q}: {\mathcal Q} \to {{\bf V} \choose d}$ в множество
$d$-элементных подмножеств {\bf V}.

\begin{prop} \label{pr:1} Отображение $\tau $ является биекцией. В частности, число кубов в любом кубильяже равно ${n \choose d}$.\end{prop}

Мы докажем это утверждение в следующем разделе. А пока приведем простое\medskip

\textbf{Следствие.} \emph{Число вершин любого кубильяжа равно ${n \choose \le
d}$.}\medskip

В самом деле, выберем некоторое общее направление проектирования (`смотрения')
$w$. {Из формулы (1.2) видно, что для каждого куба $Q$ кубильяжа все его
вершины, кроме двух, лежат на ободе этого куба. И есть одна `внутренняя'
вершина на видимой половине границы  $Q$ и, аналогично, одна `внутренняя'
вершина на невидимой половине. Обозначим эту последнюю, самую далекую он нас
вершину куба как }$h(Q)$. Легко понять, что отображение $h$ (из $\mathcal{Q}$ в
множество вершин кубильяжа ${\mathcal Q}$) инъективно. А образ заметает все
вершины, кроме вершин на видимой части $\partial _-(Z)$ границы $Z$. При
проекции $\pi =\pi _w$ фасеты $\partial_-(Z)$ дают кубильяж зонотопа $Z'=Z(\pi
({\bf V}), d-1)$, у которого, по предположению индукции, ${n \choose d-1}+...+
{n \choose 1}+{n \choose 0}$  вершин. $\Box$

%--------------SEC 3
           \section{Перегородки}\label{sec:partit}

Пусть ${\mathcal Q}$ -- некоторый кубильяж зонотопа $Z=Z({\bf V})$, где ${\bf
V}=\{v_1,...,v_n\}$. В дальнейшем мы рассмотрим несколько интересных
подмножеств в ${\mathcal Q}$ -- перегородки, тоннели, гирлянды, стэки, капсиды
... Начнем с перегородок. \medskip

\textbf{Определение. } \emph{Перегородкой} цвета $i\in [n]$ в кубильяже
${\mathcal Q}$ называется множество ${\mathcal P}_i$ кубов кубильяжа ${\mathcal
Q}$, которые содержат цвет $i$ в своем типе. То есть
                                                         $$
                              {\mathcal P}_i=\{Q\in {\mathcal Q},  \    i\in \tau (Q)\}.
                                                           $$
(В некоторых работах перегородки называются также \emph{сечениями де Брюйна}).
\emph{Телом перегородки} ${\mathcal P}_i$ называется объединение кубов из
${\mathcal P}_i$; это замкнутое подмножество в зонотопе $Z=Z({\bf V})$, которое
мы обозначаем как $|{\mathcal P}_i|$ (не путать с числом элементов множества
${\mathcal P}_i$, которое равно ${n-1 \choose d-1}$). Тело перегородки
${\mathcal P}_i$ выходит на границу зонотопа по зоне цвета $i$, см. раздел 1.
\medskip

{\bf Основная теорема о перегородках.} {\em Тело перегородки $|{\mathcal P}_i|$
устроено  как произведение отрезка $[0,v_i]$ на некоторый диск,
трансверсальный направлению $v_i$ и биективно проектирующийся на  зонотоп
$Z'=\pi _i(Z)$ (где $\pi _i$ -- проекция вдоль вектора $v_i$, см.
выше).}\medskip

Здесь для наглядности удобно считать вектор $v_i$ направленным вертикально
вверх. В каждом кубе $P$ перегородки  ${\mathcal P}={\mathcal P}_i$ проведем
`среднее сечение' $MP$ (среднее по вертикальному направлению). Все такие
локальные сечения склеиваются в  ($d-1$)-мерный кубический комплекс $M{\mathcal
P}$. Так как каждый  куб $P$ представляется как сумма по Минковскому сечения
$MP$ и трансверсального к нему вертикального отрезка $[-v_i/2, v_i/2]$, то и
вся перегородка ${\mathcal P}$ есть сумма (по Минковскому)      $M{\mathcal P}$
и отрезка $[-v_i/2, v_i/2]$. Более точно, естественное отображение
$$
M{\mathcal P} \times [-v_i/2, v_i/2] \to M{\mathcal P}+[-v_i/2, v_i/2]=|{\mathcal P}|
$$
является биекцией (и гомеоморфизмом).  Отсюда легко понять, что при
(вертикальной) проекции сечение $M{\mathcal P}$ локально гомеоморфно
проектируется на $Z'$. Так как зонотоп $Z'$ стягиваем и, следовательно,
односвязен, среднее сечение представляетися как сумма нескольких копий $Z'$. На
самом деле копия всего одна, что видно из того, что край этого комплекса
$M{\mathcal P}$ образует `среднее сечение' соответствующего обода (зоны цвета
$i$) зонотопа $Z$. $\Box$\medskip

\textbf{Замечание.} {Перегородки (а еще лучше -- их средние сечения) напоминают
по своим свойствам гиперплоскости. Как и последние, они делят зонотоп на две
области (до и после); кроме того, любые $d$ перегородок пересекаются по
единственному кубу (а средние сечения -- по единственной точке). Так что
система всех перегородок выступает как некая замена арранжмента
гиперплоскостей. Недаром вся эта теория у Манина и Шехтмана начиналась с
аранжментов гиперплоскостей. }\medskip

Проекция комплекса $M{\mathcal P}_i$ (или всей  перегородки ${\mathcal P}_i$)
дает  кубильяж $\pi _i({\mathcal P}_i)$ зонотопа $Z'=\pi _i(Z)=Z/v_i$. Этот
кубильяж (размерности  $d-1$) зонотопа $Z'$ можно обозначить как ${\mathcal
Q}/v_i$. Вслед за  Атанасиадисом \cite{Athan} мы называем кубильяж ${\mathcal
Q}/v_i$ {\em сжатием (контрактацией)} кубильяжа ${\mathcal Q}$ в направлении
$v_i$, или сжатием по цвету $i$. Отметим, что при сжатии размерность кубильяжа
уменьшается на единицу, как и число цветов.

Первое следствие основной теоремы про перегородки -- это уже упоминавшееся
утверждение про змейки. Напомним, что \emph{змейка} -- это направленный путь по
ребрам-стрелкам кубильяжа из нижней (корневой) вершины зонотопа в верхнюю. Так
вот, {\em цвета ребер змейки биективны множеству $[n]$}. То есть каждый цвет
$i\in [n]$ встречается и при том один раз как цвет ребра змейки. В самом деле,
перегородка ${\mathcal P}_i$  цвета $i$ делит зонотоп на три части -- ниже
${\mathcal P}_i$, само ${\mathcal P}_i$, и выше      ${\mathcal P}_i$. Наша
змейка (при движении от корня к верхушке) обязательно пересекает перегородку,
так что обязательно содержит стрелку цвета  $i$. Причем пересекает ровно один
раз, потому что после пересечения перегородки змейка попадает в верхнюю часть
зонотопа и уже не может ее покинуть (в перегородке все стрелки цвета $i$
смотрят вверх).

Это следствие позволяет ввести важное понятие \emph{спектра} (набора цветов)
вершины кубильяжа, которое будет играть центральную роль в комбинаторной части.
А именно, рассмотрим для вершины $v$ кубильяжа путь из 0 в $v$, идущий вверх по
стрелкам-ребрам кубильяжа. Очевидно, что такой путь существует, быть может, не
один. Этот путь -- начальный кусок некоторой змейки, поэтому цвета его ребер
образуют подмножество в $[n]$. Это подмножество не зависит от выбора пути; мы
обозначаем его $sp(v)$ и называем {\em спектром} вершины $v$. Независимость от
выбора пути видна из другого  описания $sp(v)$: цвет $i$ входит в $sp(v)$ тогда
и только тогда, когда перегородка ${\mathcal P}_i$ проходит ниже вершины $v$
(то есть отделяет $v$ от  корневой вершины 0). Заметим также, что $sp(v)$ не
зависит и от кубильяжа ${\mathcal Q}$; в самом деле,
                              $$
                              v=\sum _{i\in sp(v)} v_i.
                                $$

Наряду с перегородками можно рассмотреть в каком-то смысле дуальные объекты,
называемые {\em тоннелями} в кубильяже ${\mathcal Q}$. Зафиксируем подмножество
$D$ в $[n]$ размера $d-1$ и соберем вместе множество ${\mathcal T}_D$ кубов,
типы которых содержат $D$ (`тоннель типа $D$'). У каждого куба $Q$ из
${\mathcal T}_D$ есть две фасеты типа $D$. Пусть $F$ -- такая фасета; тогда она
либо фасета самого зонотопа $Z$, либо по этой фасете наш куб $Q$ смежен другому
кубу $Q'$, тоже из ${\mathcal T}_D$. Повторяя это с кубом $Q'$, мы получаем
соседний куб $Q''$ и так далее, пока не дойдем до фасеты зонотопа $Z$. (Эта
конструкция уже встречалась при доказательстве Леммы 1.) Таким образом, тоннель
представляет набор нескольких `толстых путей' (или циклов) внутри кубильяжа. На
самом деле легко понять, что тоннель -- это связная цепочка кубов, идущих от
одной (видимой) части  границы $Z$ к другой, невидимой. Потому что каждый
тоннель типа $D$ пересекает каждую перегородку ${\mathcal P}_i$ (где $i\notin
D$) ровно по одному кубу (типа $Di$). И состоит ровно из $n-d+1$ куба.\medskip

Теперь обратимся к доказательству Предложения 1. Оно проводится индукцией по
$d$; при $d=1$ утверждение тривиально. Пусть теперь $d>1$. Возьмем некоторую
$d$-шку $D\subset [n]$ и покажем, что существует, и при том ровно один, куб $Q$
типа $D$ в нашем кубильяже ${\mathcal Q}$. Для этого выберем один из цветов
(скажем, $i$) в $D$  и рассмотрим перегородку ${\mathcal P}_i$  цвета $i$.
Пусть $\pi _i:Z \to Z'=\pi _i(Z)$ -- проекция вдоль вектора $v_i$. Как было
сказано выше, проекция перегородки ${\mathcal P}_i$ дает кубильяж
$\pi_i({\mathcal P}_i)$ зонотопа $Z'$. По индукции у него есть единственный куб
$Q'$ (размерности $d-1$) типа $D-i$. Куб $Q$ из перегородки ${\mathcal P}_i$,
проектирующийся на $Q'$, имеет тип $(D-i)\cup  \{i\}=D$. Это доказываент
существование и единственность куба типа $D$. $\Box$\medskip

Примерно так Шеппард \cite{She} получал формулу для числа кубов кубильяжа
зонотопа (см. Предложение 1).

           \section{Редукции и экспансии}

Продолжим получать следствия структуры перегородок. Снова временно (и для
наглядности) мы считаем, что вектор $v_i$ (произвольного цвета) идет
вертикально вверх, так что мы смотрим на зонотоп $Z$ снизу вверх. Перегородка
${\mathcal P}={\mathcal P}_i$ делит $Z$ на три части: собственно перегородку
${\mathcal P}$, то, что нестрого ниже ${\mathcal P}$ (включая нижнюю границу
${\mathcal P}$), что мы обозначим $Z_-$, и верхнюю половину $Z_+$.

{\em Редукцией} кубильяжа ${\mathcal Q}$ называется кубильяж  ${\mathcal
Q}_{-i}$ зонотопа  $Z({\bf V}-\{v_i\}$), полученный следующим образом: нижнюю
часть $Z_-$ мы  оставляем на месте (вместе со всеми ее кубами), а верхнюю часть
$Z_+$ сдвигаем вниз на вектор $-v_i$ (тоже вместе со всеми кубами). При  этом
перегородка ${\mathcal P}$ исчезает, а лучше сказать -- сжимается на свою
нижнюю границу, превращаясь в то, что позже мы назовем мембраной. Эту мембрану
${\mathcal M}=Z_-\cap |{\mathcal P}_i|$ мы называем {\em швом} (или рубцом),
оставшимся после операции редукции. См. рис. \ref{fig:2}.

\begin{figure}[h]
\begin{center}
\includegraphics[scale=.6]{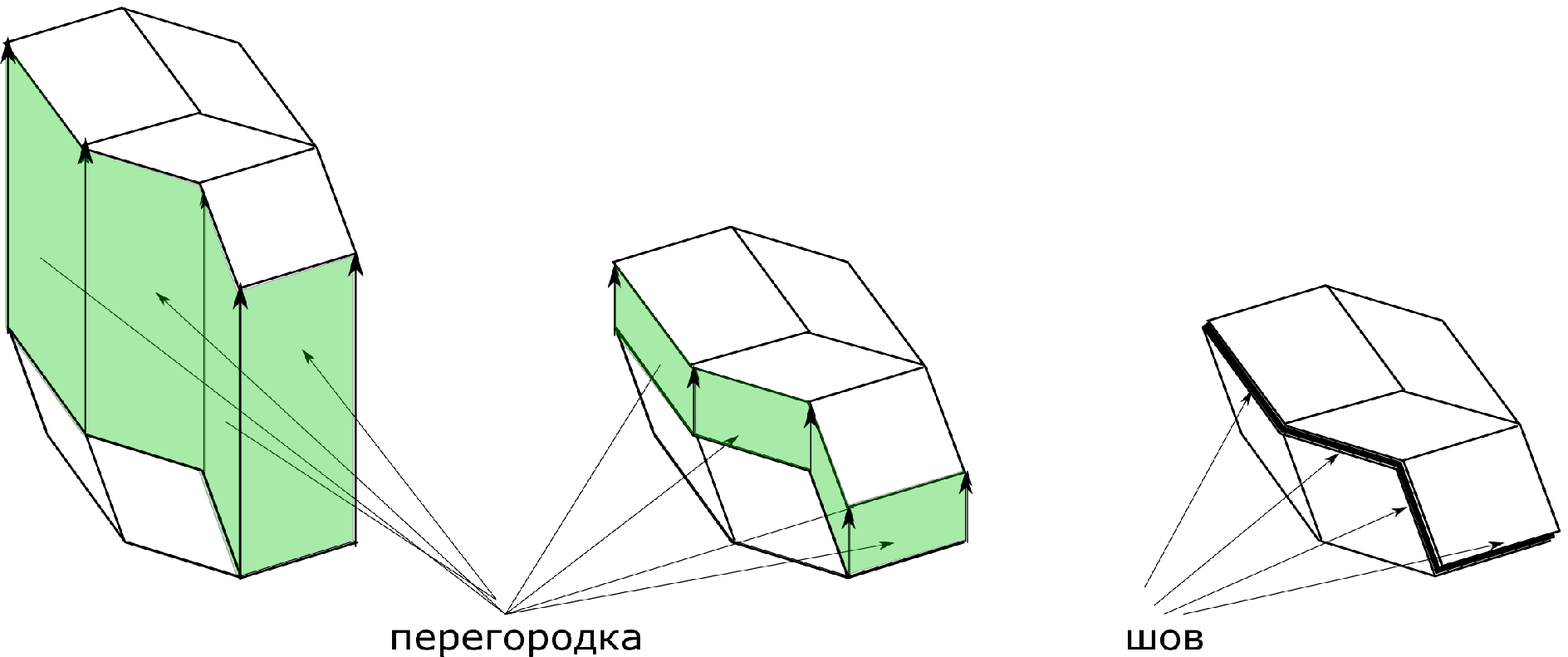}
\end{center}
\caption{\small Перегородка и ее постепенная редукция}
\label{fig:2}
\end{figure}

Эта операция редукции обратима: если мы `раздвинем' шов-мембрану ${\mathcal M}$
с помощью отрезка $[0,v_i]$ (отодвинув часть редуцированного кубильяжа,
расположенную выше шва, на вектор $v_i$), мы вернемся к исходному кубильяжу
${\mathcal Q}$. Эту обратную операцию мы   называем {\em экспансией}.

Вообще, если у нас есть кубильяж $\widetilde{{\mathcal Q}}$ зонотопа
$\widetilde{Z}=Z({\bf V}-\{v_i\})$ и в нем мембрана ${\mathcal M}$ (то есть
($d-1$)-мерный подкомплекс кубильяжа ${\mathcal Q}$, который биективно
проектируется при `вертикальной проекции'  на
$\widetilde{Z}'=\pi(\widetilde{Z})$), то мы можем `раздвинуть'  мембрану
$\mathcal M$ на вектор $v_i$ и получить некоторый кубильяж ${\mathcal Q}$
зонотопа $Z$. Эту операцию мы называем       \emph{экспансией мембраны}
${\mathcal M}$ в кубильяже $\widetilde{{\mathcal Q}}$ в направлении
$v_i$.\medskip

\textbf{Следствие.} {\emph{Кубильяж зонотопа $Z(\textbf{V})$ определяется
множеством его вершин. }}\medskip

\begin{proof} Пусть  $\mathcal{Q}$  и  $\mathcal{R}$  -- два кубильяжа зонотопа $Z(\textbf{V})$ с одним и тем же множеством вершин. Выберем какой-то цвет, скажем, $n$, и произведем редукцию этого цвета в $\mathcal{Q}$  и    $\mathcal{R}$. В результате мы получим два новых кубильяжа $\mathcal{Q}_{-n}$ и $\mathcal{R}_{-n}$ (зонотопа $Z(\mathbf{V}-\{v_n\})$) и в каждом шов $S$ и $T$. Шов $S$ составлен из `слившихся' вершин $v$ и $v'$, таких что $sp(v')=sp(v)\cup \{n\}$; аналогично для шва $T$. Отсюда видно, что совпадают как эти два шва, так и множества вершин кубильяжей $\mathcal{Q}_{-n}$ и $\mathcal{R}_{-n}$. По индукции мы получаем совпадение самих кубильяжей $\mathcal{Q}_{-n}=\mathcal{R}_{-n}$. А исходные кубильяжи $\mathcal{Q}$  и  $\mathcal{R}$ получаются $n$-экспансией общего шва $S=T$, так что они тоже совпадают.
  \end{proof}

Раз уж речь зашла о вершинах кубильяжа, то возникает вопрос -- \emph{какие
точки (или наборы точек) внутри зонотопа $Z=Z(\textbf{V})$ могут быть вершинами
некоторого кубильяжа этого зонотопа} (`вписываются в кубильяж')? Одно
требование очевидно (см. выше определение спектра): такая точки должна иметь
вид $v(S)=\sum _{i\in S} v_i$ для некоторого подмножества $S\subset [n]$. Точки
такого вида мы называем \emph{целыми}, и это требование всюду далее
предполагается выполненным. Мы еще вернемся к этому вопросу во второй части
работы, а пока приведем один простой результат в этом направлении. А именно, мы
утверждаем, что любая отдельно взятая целая точка вписывается в некоторый
кубильяж. На самом деле верно даже большее: \emph{любой подзонотоп вписывается
в некоторый кубильяж}. Более точно, множество вершин любого подзонотопа $T
\subset  Z$ вписывается в некоторый кубильяж зонотопа $Z$; здесь
предполагается, что $T$  есть сдвиг на целую точку $v(S)$ зонотопа
$Z(\textbf{W})$, где  $\textbf{W} \subset \textbf{V}$, {и $S$ не пересекается с
множеством индексов $\textbf{W}$.}\medskip

 \begin{prop} \label{pr:3}
Любой подзонотоп (и любой его кубильяж) $T$ зонотопа $Z=Z(\textbf{V})$ вписывается в некоторый кубильяж $Z$.
  \end{prop}

 \begin{proof}
Будем рассуждать индукцией по $n$ (или по размеру $\textbf{V-W}$). Пусть $t(T)$
-- `верхняя' вершина подзонотопа $T$. Предположим сначала, что эта вершина
отлична от верхней вершины всего зонотопа $Z$. В этом случаем множество
$sp(t(T))$ отлично от $[n]$; пусть $i$ -- произвольный цвет, не принадлежащий
$sp(t(T))$. Тогда подзонотоп $T$ лежит в зонотопе $Z'=Z(\textbf{V}-\{i\})$, и
по индуктивному предполоржению вписывается в некоторый кубильяж $\mathcal{Q}'$
зонотопа $Z'=Z(\textbf{V}-\{i\})$. Остается сделать экспансию по цвету $i$. См.
рис. \ref{fig:3}.

\begin{figure}[htb]
\begin{center}
\includegraphics[scale=.23]{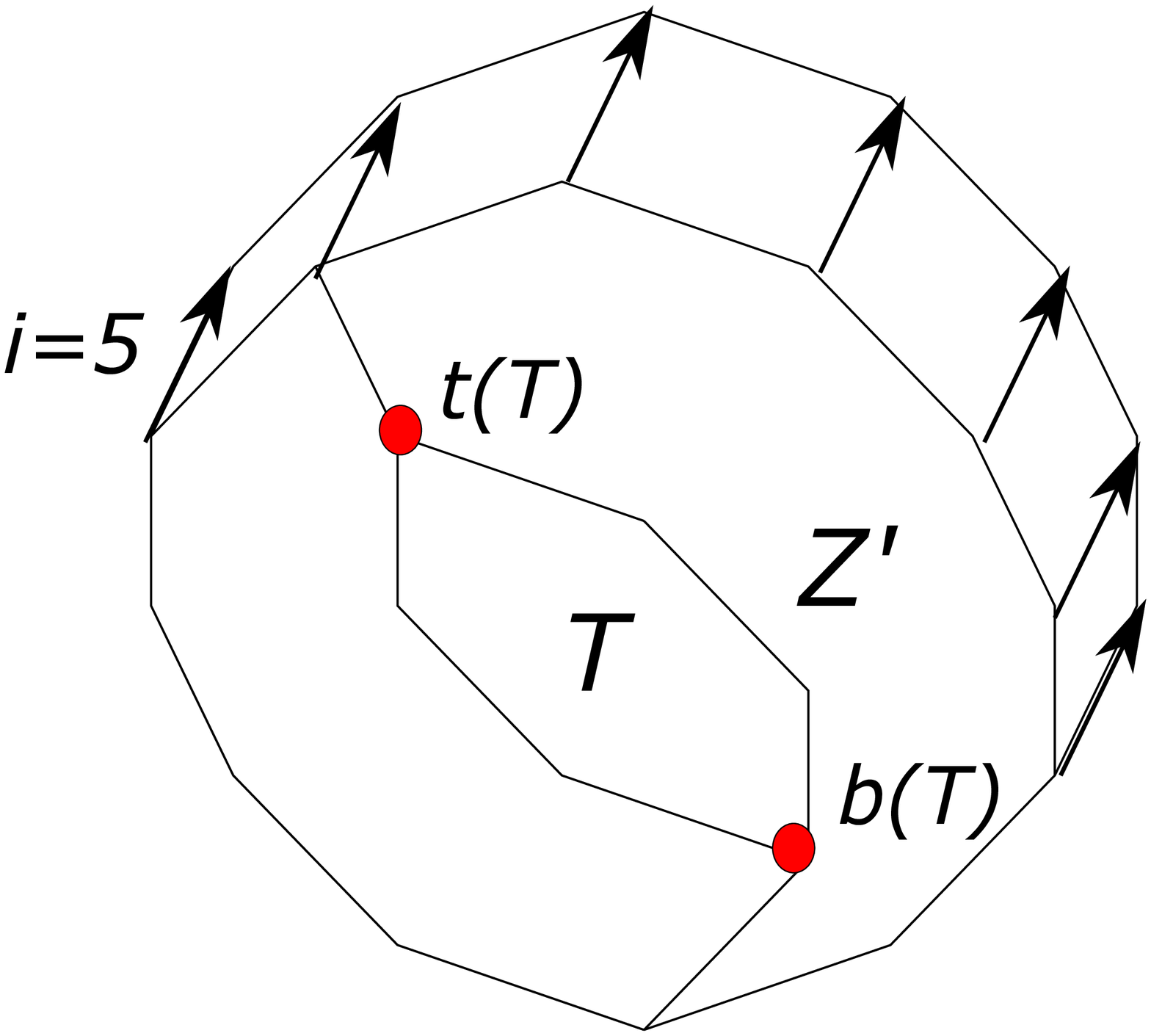}
\end{center}
\caption{подзоногон $T$ в зоногоне $Z(7,2)$}
 \label{fig:3}
  \end{figure}

Таким образом, мы можем считать, что `верхние' вершины у $T$ и $Z$ совпадают.
Рассуждая симметрично, можно считать, что совпадают и `нижние' (корневые)
вершины. Но тогда $T$ и $Z$ совпадают, и утверждение тривиально верно.
\end{proof}

Экспансию мембраны ${\mathcal M}$ можно производить не только в направлении
вектора $v_i$, но и в любом `близком' направлении $v'_i$. Точнее, тут важна не
близость, а только то, чтобы по  отношению ко всем пластинкам (граням
размерности $d-1$) мембраны  ${\mathcal M}$ вектора $v_i$ и $v'_i$ глядели в
одну и ту же сторону. Раздвигая  мембрану ${\mathcal M}$ в направлении $v'_i$,
мы получаем другой  кубильяж другого  зонотопа, но в каком-то смысле похожий,
`подобный' старому. Скажем  об этом чуть подробнее.

Говоря о зонотопе $Z({\bf V})$ и его кубильяжах, мы фиксировали  векторную
конфигурацию {\bf V}. Однако интуитивно ясно, что малые шевеления {\bf V} не
играют роли в строении кубильяжей. Скажем точнее. Пусть у нас есть две векторые
конфигурации  ${\bf V}=\{v_1,...,v_n\}$ и ${\bf W}=\{w_1,...,w_n\}$ (в одном и
том же $d$-мерном пространстве $V$ и с одним и тем же числом векторов $n$).
Скажем, что конфигурации {\bf V} и {\bf W} {\em подобны}, если для любого
подмножества $B\subset [n]$ размера $d$ определители $\det(v_B)$ и $\det(w_B)$
имеют один и тот же знак (или что  соответствующие базисы $v_B$ и $w_B$ имеют
одну и ту же ориентацию).

Если конфигурации {\bf V} и {\bf W} подобны, то для любого  кубильяжа
${\mathcal Q}$ зонотопа $ЕZ({\bf V})$ естественно строится `подобный' кубильяж
${\mathcal Q}'$ зонотопа  $Z({\bf W})$, который комбинаторно устроен так же,
как ${\mathcal  Q}$. А именно, пусть $Q$ - куб кубильяжа ${\mathcal Q}$; у него
есть корневая вершина $v$ и тип $\tau (Q)\subset [n]$. Из начальной  вершины 0
зонотопа      $Z({\bf V})$ в вершину $v$ ведет путь $P$ по стрелкам кубильяжа
$Z({\bf  V})$. Проведем соответствующий путь $P'$ в $Z({\bf W})$ и конец его
$v'$ объявим образом вершины $v$. Ясно, что это построение не зависит  от
выбора пути $P$. Образ куба $Q$ -- это куб того же типа, что и $Q$, растущий из
вершины $v'$. Так мы получаем новое множество ${\mathcal Q}'$ кубов в $Z({\bf
W})$, и нужно лишь убедиться, что эти кубы не налезают друг на  друга. А это
достаточно проверить для соседних кубов. То есть если кубы $Q$ и $R$ кубильяжа
${\mathcal Q}$ соседние (по общей фасете $F$), то их  образы $Q'$ и $R'$
(которые, очевидно, имеют общую фасету $F'$) лежат по  разные стороны от $F'$.

Пусть фасета $F$ (то есть куб размерности $d-1$) имеет тип  $\tau (F)$. Кубы
$Q$ и $R$ получаются добавлением к векторам $v_{\tau (F)}$ каких-то  векторов
$v_i$ и $v_j$. Так как они лежат по разные стороны от $F$, то      ориентации
базисов $v_{\tau (F)i}$ и $v_{\tau (F)j}$ противоположны. Но тогда и ориентации
базисов $w_{\tau (F)i}$ и $w_{\tau (F)j}$ противоположны, что и  означает, что
кубы $Q'$ и $R'$ лежат по разные стороны от $F'$.

Это показывает, что строение (и разнообразие) кубильяжей  зонотопа $Z({\bf V})$
зависит только от ориентированного матроида, порожденного векторной
конфигурацией {\bf V}. Мы будем активно  пользоваться этим простым замечанием
при рассмотрении вполне положительных конфигураций.

%--------------SEC 5
\section{Циклические (вполне положительные) вектор\-ные конфигурации}
\label{sec:cyclic_config}

До сих пор конфигурации {\bf V} векторов в пространстве $V$ были, фактически,
произвольными (если не считать требования общего положения). Начиная с этого
места мы ограничиваемся более  специальными конфигурациями, которые обычно
называются циклическими или вполне положительными. Здесь и далее мы будем
считать, что $V={\mathbb R}^d$ (с базисом $e_1,...,e_d$); кроме того, теперь
важную роль будет играть порядок  индексации векторов $v_i$ (то есть, мы
снабжаем множество индексов-цветов $[n]$ естественным порядком $1<2<...<n$).
\medskip

{\bf Определение.} Конфигурация векторов ${\bf V}=(v_1,...,v_n)$ называется
{\em вполне положительной} (или, не очень удачно, но более коротко, {\em
циклической}), если для любой  (возрастающей) $d$-шки $i_1<i_2<...<i_d$ из
$[n]$ определитель матрицы  $\textrm{Mat}(v_{i_1},...,v_{i_d})$, составленной
из столбцов-векторов  $v_{i_j}$, положителен.\medskip

Циклические конфигурации подобны между собой и определяются двумя числами $d$ и
$n\ge d$. По этой причине циклический  зонотоп $Z({\bf V})$ обозначается далее
просто как $Z(n,d)$, а множество его  кубильяжей -- как ${\bf Q}(n,d)$.

Удобный представитель такой конфигурации получается следующим  образом.
Отобразим прямую ${\mathbb R}$ в ${\mathbb R}^d$ с помощью  отображения ${\bf
v}_d$, ${\bf v}_d(t)=(1,t,t^2,...,t^{d-1})$. (Когда $d$ ясно из контекста, мы
опускаем инлекс $d$ и говорим просто про {\bf v}. Образ прямой ${\mathbb R}$
при таком      отображении называют \emph{кривой Веронезе} или \emph{кривой
моментов}.) Если  мы теперь возьмем конечное подмножество $\{t_1<...<t_n\}$ в
${\mathbb R}$, мы  получаем (индексированную множеством $[n]$) векторную
конфигурацию {\bf V}, состоящую из векторов $v_i={\bf v}(t_i)$, $i=1,...,n$.
Такая конфигурация вполне положительна, что видно  из формулы для определителей
Вандермонда.

Отметим два важных для дальнейшего свойства циклических конфигураций. Если мы
выбрасываем какой-то элемент $k$ из $[n]$, конфигурация остается циклической.
Так что мы можем говорить о проекциях $\pi _k:Z(n,d) \to Z([n]-\{k\},d)$ и
соответствующих редукциях кубильяжей (см. выше). И второе (применимое к
конфигурациям Веронезе): если мы рассмотрим каноническую проекцию $\pi $
пространства ${\mathbb R}^d$ на ${\mathbb R}^{d-1}$ (проекция вдоль последнего
базисного вектора $e_d$, или забывание $d$-ой координаты), то циклическая
конфигурация переходит в циклическую. В частности, мы получаем проекцию $\pi
:Z(n,d)\to Z(n,d-1)$, полезную для проведения индукции по $d$. Заметим еще, что
эту проекцию $\pi $ (вдоль $d$-ой координатной оси) можно понимать как проекцию
вдоль направления ${\bf v} (+\infty )$, потому что направление вектора ${\bf
v}(t)$ при  большом $t$ стремится к направлению вектора $e_d$. Неформальная же
причина интереса циклических конфигураций объясняется тем, что  кубильяжи
циклических зонотопов тесно связаны с высшими порядками Манина-Шехтмана, о чем
мы будем говорить в комбинаторной части  работы.

Мы уже говорили про редукцию цветов для общих зонотопов. В случае циклических
зонотопов особо важную роль играет редукция старшего цвета $n$. Дело в том, что
шов, остающийся после такой редукции, является мембраной относительно проекции
$\pi $ вдоль $d$-й  координатной оси. Расскажем более обстоятельно об этом в
следующем разделе.

%--------------SEC 6
           \section{Мембраны}\label{sec:membr}

{\bf Определение.} Пусть ${\mathcal Q}$ - кубильяж (циклического) зонотопа
$Z=Z(n,k)$. {\em Мембраной} этого кубильяжа, или мембраной, \emph{вписанной} в
кубильяж ${\mathcal Q}$, называется ($d-1$)-мерный подкомплекс ${\mathcal M}$ в
${\mathcal Q}$, который биективно проектируется на зонотоп $Z(n,d-1)$ (при
проекции $\pi $ вдоль последней $d$-ой координаты).

Множество мембран кубильяжа ${\mathcal Q}$ обозначается как ${\bf M}({\mathcal
Q})$.\medskip

Так что мембрана -- это некоторая $(d-1)$-мерная пленка в $Z$ (идущая по граням
кубильяжа ${\mathcal Q}$), гомеоморфная $(d-1)$-мерному диску, край которого --
в точности обод зонотопа $Z$ относительно проекции $\pi$. Мембрана делит
зонотоп на две части: нестрого до мембраны ($Z_-({\mathcal M})$) и нестрого
после нее ($Z_+({\mathcal M})$). Проекция клеток мембраны ${\mathcal M}$ (ее
$(d-1)$-мерных кубов, называемых \emph{пластинами}) дает при этом кубильяж
зонотопа $Z'=Z(n,d-1)=\pi (Z)$, который мы обозначаем $\pi ({\mathcal M})$.

Априори не ясно, существуют ли мембраны вообще и много ли их. Позже мы увидим,
что их достаточно много. А пока приведем пример двух универсальных
мембран.\medskip

{\bf Пример 1.} Мы уже говорили про это в разделе 2; давайте вернемся
применительно к циклической ситуации. Рассмотрим видимую (в направлении $d$-ой
координаты) часть границы $Z$, $\partial_-Z$. Очевидно, это мембрана (любого
кубильяжа зонотопа $Z$); ее проекция $\pi (\partial _-Z)$ дает кубильяж
зонотопа $Z'$, который мы называем {\em стандартным} (так именовались
соответствующие ромбические тайлинги двумерного зоногона в \cite{UMN}).
Невидимая  часть границы $\partial _+Z$ дает {\em анти-стандартный} кубильяж
зонотопа $Z'$.\medskip

Универсальный способ получения мембран дает редукция старшего  цвета. Пусть
${\mathcal Q}$ -- кубильяж зонотопа $Z=Z(n,d)$. И пусть  $\widetilde{{\mathcal
Q}}={\mathcal Q}_{-n}$ --  кубильяж зонотопа $\tilde Z=Z(n-1,d)$, полученный
редукцией цвета  $n$. Тогда  \emph{шов $\mathcal S $ от этой редукции является
мембраной}. В самом деле, замена  вектора $v_n$ на $d$-й базисный вектор
$e_d=(0,...,0,1)$ в ${\mathbb  R}^d$ приводит  к подобной векторной
конфигурации. Поэтому (см. раздел 4) можно  произвести экспансию шва $\mathcal
S $ в направлении $e_d$ и получить кубильяж  ${\mathcal Q}'$, подобный
${\mathcal Q}$. А это значит (см. основную теорему из раздела 3), что шов
$\mathcal S$ биективно проектируется на зонотоп $\pi (Z)=Z(n,d-1)$.

Напротив, имея мембрану ${\mathcal M}$ в кубильяже $\widetilde{{\mathcal  Q}}$
зонотопа $\widetilde Z=Z(n-1,d)$,      мы можем экспансировать эту мембрану в
направлении нового вектора  $v_n={\bf v}(t_n)$, где $t_n>t_{n-1}$. Дело в том,
что направление,  задаваемое вектором $v_n$, также трансверсально мембране
${\mathcal M}$ (как и      направление ${\bf v}(+\infty )$). Мы говорили об
этом в разделе 4 про подобные  конфигурации. В результате получаем новый
кубильяж ${\mathcal Q}$  зонотопа $Z=Z(n,d)$. `Раздвинутая' мембрана ${\mathcal
M}+[0,v_n]$ становится перегородкой старшего цвета $n$ в $\mathcal Q$.
Редуцируя по этому цвету, мы  возвращаемся к исходному кубильяжу
$\widetilde{{\mathcal Q}}$.

Таким образом, пара $(\widetilde{{\mathcal Q}},{\mathcal M})$, где
$\widetilde{{\mathcal Q}}$ -- кубильяж зонотопа      $\widetilde{Z}=Z(n-1,d)$,
а ${\mathcal M}$ -- мембрана в $\widetilde{{\mathcal Q}}$, определяет кубильяж
${\mathcal Q}$ зонотопа  $Z=Z(n,d)$. И на этом пути получаются все кубильяжи
зонотопа $Z$. В  самом деле, пусть ${\mathcal Q}$ -- произвольный кубильяж $Z$.
Возьмем в нем  перегородку ${\mathcal P}$ цвета $n$ и редуцируем ее. Шов
$\mathcal S $ от этой редукции, как уже говорилось, является мембраной для
редуцированного кубильяжа      $\widetilde{{\mathcal Q}}={\mathcal Q}_{-n}$
редуцированного зонотопа $Z(n-1,d)$. Экспансия этой мембраны $\mathcal S $
возвращает нас к исходному кубильяжу  ${\mathcal Q}$.

           Итог этого обсуждения можно подвести следующим утверждением.

\begin{prop} \label{pr:4}
Задать кубильяж зонотопа $Z(n,d)$ --  то же самое, что задать пару
$(\widetilde{{\mathcal Q}}, {\mathcal M})$, где $\widetilde{{\mathcal Q}}$ --
кубильяж зонотопа $Z(n-1,d)$, а ${\mathcal M}$ -- мембрана для
$\widetilde{{\mathcal Q}}$.
 \end{prop}

Или чуть иначе: существует естественное сюръективное  отображение ${\bf Q}(n,d)
\to {\bf Q}(n-1,d)$; слой его над любой точкой ${\mathcal Q}\in {\bf Q}(n-1,d)$
совпадает с множеством ${\bf M}({\mathcal Q})$ мембран для ${\mathcal
Q}$.\medskip

{\bf Пример 2.} Стандартный кубильяж зонотопа $Z(n,d)$ получается из
стандартного же кубильяжа зонотопа $Z'=Z(n-1,d)$ с помощью `задней'  мембраны
$\partial_+(Z')$ (невидимой части границы $Z'$). {См. рис. \ref{fig:7}, где
$d=2$.}

В самом деле, мы определяли стандартный кубильяж зонотопа $Z(n,d)$ как образ
`передней' (видимой)  мембраны $\partial_-Z(n,d+1)$ зонотопа $Z(n,d+1)$. Но как
выглядит последняя? Зонотоп  $Z(n,d+1)$ получается из зонотопа $Z(n-1,d+1)$
прибавлением отрезка  $[0,{\bf v}_{d+1}(n)]$. Соответственно, его видимая
граница состоит из видимой границы $\partial _-$ зонотопа $Z(n-1,d+1)$ плюс
фасеты вида  $F+[0,{\bf v}_{d+1}(n)]$, где $F$ пробегает множество невидимых (в
направлении координаты $d$)  фасет этой видимой границы $\partial _-$. При
проекции вдоль координаты $d+1$ первая половина превращается в стандартный
кубильяж зонотопа $Z'=Z(n-1,d)$, а фасеты $F$ -- в невидимые фасеты этого
зонотопа $Z'$, образуя в совокупности невидимую часть границы зонотопа $Z'$.
Поэтому вторая половина проектируется в точности на экспансию (по  цвету $n$)
задней (невидимой) мембраны зонотопа $Z'$.

Аналогично, антистандартный кубильяж получается экспансией по цвету $n$
передней (видимой) мембраны $\partial_-Z'$ антистандартного  кубильяжа зонотопа
$Z'=Z(n-1,d)$.\medskip

%--------------SEC 7
           \section{Кубы и флопы}\label{sec:cubes}

Рассмотрим более обстоятельно случай куба, зонотопа $C=Z(d,d)$. У него есть
всего две мембраны -- видимая (передняя) $\partial_-C$  и невидимая (задняя)
$\partial_+C$, которые при проекции $\pi$  дают стандартный и антистандартный
кубильяжи зонотопа $Z(d,d-1)$. Из подсчета числа вершин обода (см. (1.2))
видно, что на передней мембране имеется ровно одна `внутренняя' вершина $t$
(назовем ее \emph{хвостом}), как и на задней одна `внутренняя' вершина  $h$
(назовем ее \emph{головой}).

\begin{figure}[h]
\begin{center}
\includegraphics[scale=.25]{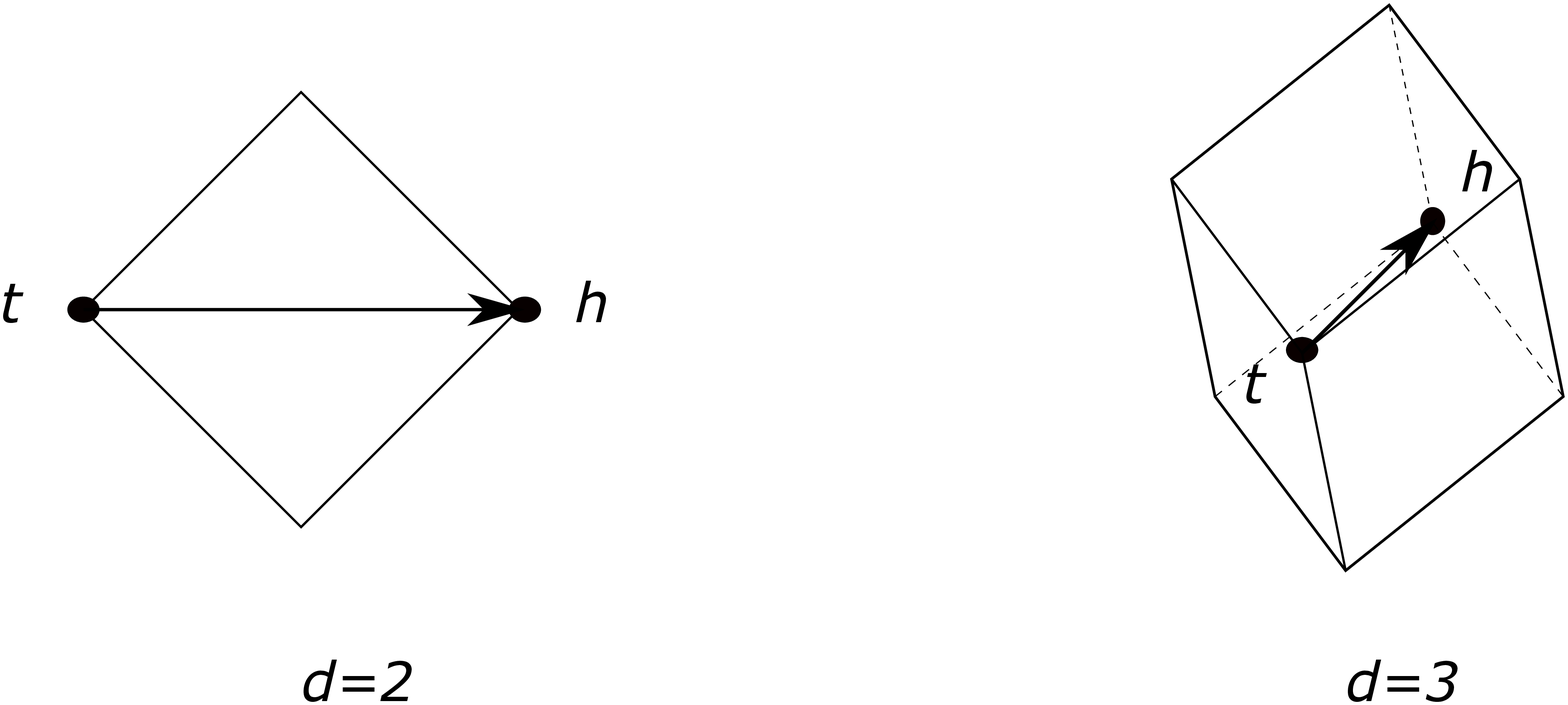}
\end{center}
  \caption{Хвост и голова}
 \label{fig:4}
  \end{figure}

Скажем более конкретно о спектрах этих вершин. Для этого вернемся к разделу 1,
где описывались фасеты зонотопа. Аналогично можно поступить и здесь, для
описания вершин $t$ и $h$. Напомним, что вектора $v_1,...,v_d$  задавались
числами $t_1<...<t_d$ и мы полагали $v_i=\textbf{v}(t_i)$. Возьмем некоторые
перемежающие их числа $s_1,...,s_{d-1}$, так что
$t_1<s_1<t_2<...<t_{d-1}<s_{d-1}<t_{d}$.  И в качестве линейного функционала на
пространстве $\mathbb{R}^{d}$ возьмем $\det(\textbf{v}(s_1), ...,
\textbf{v}(s_{d-1}), \bullet)$. Заметим, что этот функционал положителен на
векторах $v_{d}$ и $e_{d}$. Он положителен также на векторах $v_i$, когда $i$
имеет ту же четность, что и $d$, и отрицателен на векторах $v_i$, когда $i$
имеет противоположную с $d$ четность. Отсюда видно, что {головная} вершина
${h}$  есть сумма векторов $v_i$, таких что $d-i$ четно, тогда как {хвостовая}
вершина ${t}$  есть сумма векторов $v_i$, таких что $d-i$ нечетно. Так что
$sp(h)=\{d,d-2,...\}$, тогда как $sp(t)=\{d-1,d-3,...\}$. {Например, в случае
$d=3$ (см. рис. \ref{fig:4})} $sp(t)=2$, $sp(h)=13$ (как и далее, 13 обозначает
множество $\{1,3\}$). Очевидно, что в объединении эти множества-спектры дают
все $[d]$.

Стрелку, идущую из хвоста  $t$   в голову $h$, будем назвать \emph{хордой} куба
$C$.  В случае четного  $d$ хорда идет горизонтально, тогда как в при нечетном
$d$ она поднимается на 1 при каждом шаге, как видно из рис.  \ref{fig:4}.
{Здесь впервые начинает проявляться небольшое различие между четными и
нечетными $d$.}

После расмотрения изолированного куба обратимся к общей ситуации.  Пусть
${\mathcal Q}$ -- кубильяж зонотопа $Z=Z(n,d)$, и $Q$ -- некоторый куб этого
кубильяжа с хвостом $t_Q$, головой $h_Q$ и хордой $t_Q \to h_Q$. Если мы
проделаем это с каждым кубом кубильяжа, то эти хорды дадут структуру
направленного графа на множестве вершин кубильяжа ${\mathcal Q}$. Этот граф
состоит из путей-нитей, соединяющих (внутренние) вершины передней мембраны
$\partial_-Z$ с (внутренними же) вершинами задней мембраны $\partial_+Z$.
Каждый куб оказывается `нанизанным' на свою хорду, а кубы, нанизанные на пути,
выглядят как `гирлянды'. Считая, что вершины обода соединены сами с собой путем
длины 0, мы получаем интересную (и слабо изученную) `гирляндную' биекцию
множества вершин передней мембраны в множество вершин задней мембраны зонотопа
$Z$.

\begin{figure}[h]
\begin{center}
\includegraphics[scale=.25]{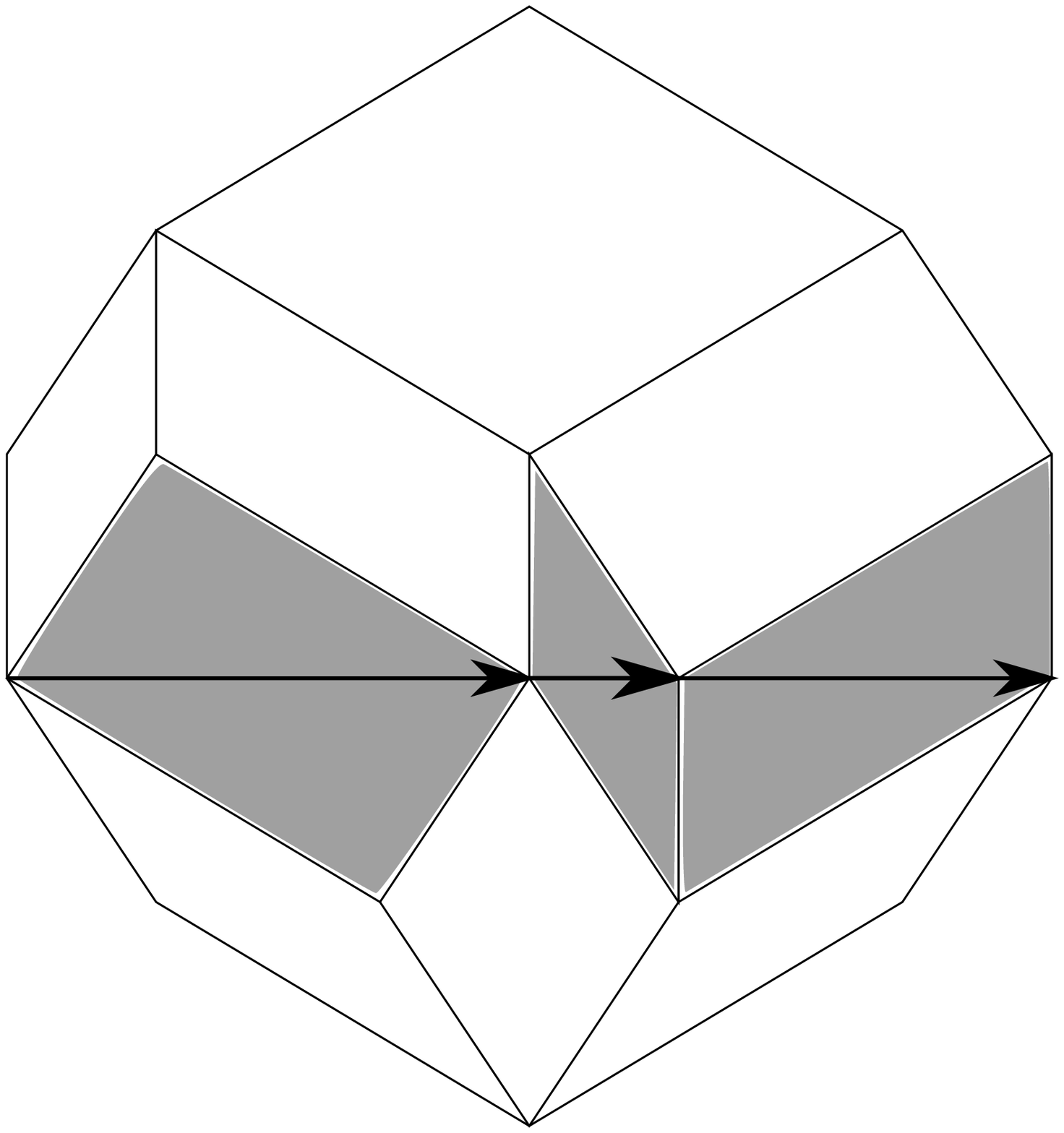}
\end{center}
  \caption{Гирлянда в зоногоне $Z(5,2)$}
 \label{fig:5}
  \end{figure}

Разумеется, можно не ограничиваться только граничными мембранами. Если
$\mathcal{M}$ и $\mathcal{M}'$ -- две произвольные мембраны (одного и того же
кубильяжа), то каждая гирлянда `протыкает' каждую из мембран ровно одит раз.
Тем самым мы получаем `гирляндную' биекцию между множеством вершин
$\mathcal{M}$ и множеством вершин $\mathcal{M}'$. Это лишний раз показывает,
что число вершин у каждой мембраны такое же, как у стандартной. На самом деле
мы получаем даже больше: при четном $d$ у всех мембран зонотопа $Z(n,d)$ на
каждой высоте одно и то же число вершин. Или: при нечетном $d$ у любого
кубильяжа зонотопа $Z(n,d)$ число вершин на каждом этаже одно и то же.\medskip

{\bf Пример.} {Проиллюстрируем гирляндную биекцию на примере стандартного
кубильяжа занотопа $Z(4,3)$. Как уже говорилось в предыдущем разделе, такой
кубильяж получается из куба $C=Z(3,3)$ экспансией по цвету 4 задней стороны
этого куба. Задняя сторона $\partial _+C$ состоит из трех пластинок-ромбов, так
что стандартный кубильяж $Z(4,3)$ состоит из четырех кубов: исходного $C$ и
трех `экспансированных' из этих пластинок. Мы изобразили все это на рис. 5, где
для наглядности вынули и слегка отодвинули первый куб $C$.}

\begin{figure}[h]
\begin{center}
\includegraphics[scale=.8]{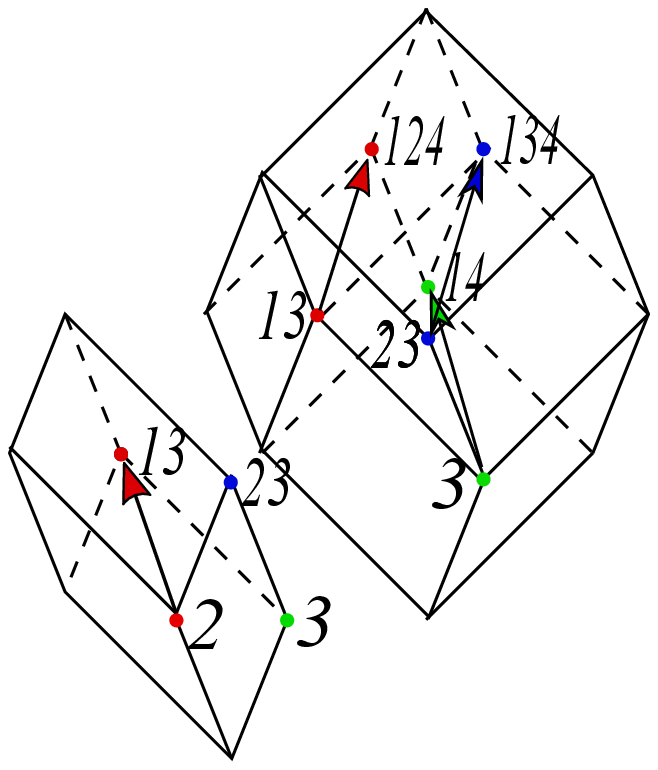}
\end{center}
  \caption{\small Гирляндное отображение для стандартного кубильяжа зонотопа $Z(4,3)$}
 \label{fig:6}
  \end{figure}

           Передняя сторона (или мембрана) зонотопа $Z(4,3)$ содержит,
      кроме периферийных (лежащих на ободе), три вершины со спектрами 2,3
      и 23. Так как гирляндное отображение $\gamma _S$ тривиально действует на
      ободе, нам нужно понять -- куда при отображении $\gamma _S$ переходят эти
      три вершины 2, 3 и 23. Глядя на рисунок, мы видим, что 2 переходит
      сначала в точку 13, которая затем переходит в точку 124. Точка 3
      переходит в 14, а точка 23 -- в 134. Окончательно,
                                                             $$
      \gamma _S:         \left\{ \begin{array}{c}
                                 2\mapsto 124 \\
                                 3 \mapsto 14 \\
                                 23 \mapsto 134
                               \end{array}\right..
      $$
Действуя аналогично, можно убедиться, что гирляндное отображение  $\gamma _A$
(для антистандартного кубильяжа $Z(4,3)$) устроено так:
                                                         $$
      \gamma _A:  \left\{ \begin{array}{c}
                                 2\mapsto 14 \\
                                 3 \mapsto 134 \\
                                 23 \mapsto 124
                               \end{array}\right..
                  $$

{Последнее, впрочем, можно увидеть и из следующего общего соображения.
Произвольный зоногон $Z$ симметричен относительно своего центра. Если
обозначить эту симметрию как $\alpha $, мы для любого кубильяжа $\mathcal Q $
можем рассмотреть симметричный ему кубильяж $\alpha \mathcal Q $ того же
зонотопа $Z$. При этом стандартный кубильяж переходит в антистандартный, и
наоборот. Пусть $S$ обозначает вершины зоногона на передней стороне $Z$, а $A$
-- вершины на задней стороне. Инволюция       $\alpha $ осуществляет биекцию
$S$ на $A$; подмножество $X$ переходит в свое дополнение $[n]-X$.}

\begin{prop} \label{pr:4} Если $\gamma _\mathcal Q :S \to A$ -- гирляндная биекция,
      связанная с кубильяжем $\mathcal Q $, то $\gamma _{\alpha
      \mathcal Q }=\alpha ^{-1}\gamma _\mathcal Q \alpha $.
\end{prop}

\begin{proof} При инволюции $\alpha $ каждая хорда куба $Q$ переходит в противоположно направленную хорду куба $\alpha Q$.
 \end{proof}

Пусть ${\mathcal M}$ -- мембрана в некотором кубильяже ${\mathcal Q}$.
Представим, что некоторый куб $Q$ кубильяжа ${\mathcal Q}$  касается мембраны
всей своей видимой границей $\partial _-Q$. (Ниже мы увидим, что такая
ситуация не исключительна и встречается постоянно.) В этом случае можно
образовать новую мембрану ${\mathcal M}'$, которая всюду совпадает с ${\mathcal
M}$, за исключением того, что этот фрагмент $\partial _-Q$ заменяется новым
фрагментом $\partial _+Q$. Иначе говоря, если мембрана ${\mathcal M}$ обходила
куб $C$      спереди, то теперь она обходит его сзади. Мы говорим, что мембрана
${\mathcal M}'$ получена из ${\mathcal M}$ с помощью {\em  повышающего флопа}
(а ${\mathcal M}$ получена из ${\mathcal M}'$ \emph{понижающим флопом}).

Это можно выразить чуть иначе. Пусть ${\mathcal Q}_-({\mathcal M})$ обозначает
набор кубов кубильяжа ${\mathcal Q}$, которые расположены до мембраны
${\mathcal M}$. Тогда ${\mathcal Q}_-({\mathcal M}')$ получается добавлением к
${\mathcal Q}_-({\mathcal M})$ ровно одного куба $Q$. Так что при повышающих
флопах мембрана движется `от нас', захватывая по очереди по одному кубу
кубильяжа, пока не дойдет до `задней' границы  зонотопа.

%--------------SEC 8
\section{Капсиды и флипы}\label{sec:capsid}

После обстоятельного рассмотрения куба естественно перейти к следующему по
сложности случаю зонотопа  $Z=Z(d+1,d)$. Такой зонотоп (или подзонотоп в
большем зонотопе $Z(n,d)$) будем для краткости называть {\em
капсидом}\footnote{Термин взят из вирусологии. Так называется белковая оболочка
вируса, обычно полиэдральная (типа ромбододекаэдра, то есть $Z(5,3)$).}.

Капсид представляется как проекция $(d+1)$-мерного куба
$\widehat{Z}=Z(d+1,d+1)$. Проекция передней мембраны этого куба дает
стандартный кубильяж капсида $Z$. А проекция `хвоста' $t_{\widehat{Z}}$ дает
единственную `центральную' (не лежащую на ободе) вершину стандартного
кубильяжа, которую мы обозначим как $c^{st}$. Ее спектр $sp(c^{st})=\{d,
d-2,...\}$. Эта вершина со всех сторон окружена кубами стандартного кубильяжа
капсида и является вершиной каждого куба этого кубильяжа. Симметрично невидимая
мембрана содержит `центральную' вершину $c^{an}$ антистандартного кубильяжа
капсида со спектром $\{d+1, d-1,...\}$. Например, при $d=2$ $sp(c^{st})=2$,
$sp(c^{an})=13$; при $d=3$  $sp(c^{st})=13$, $sp(c^{an})=24$.

В силу Следствия из Раздела 4 мы получаем, что капсид имеет только два кубильяжа, описанных выше. Подведем итог:

 \begin{prop} \label{pr:5}
Капсид $Z(d+1,d)$ допускает всего два кубильяжа - стандартный и
антистандартный. Стандартный кубильяж характеризуется тем, что кроме вершин
обода содержит вершину $c^{st}$ со спектром $\{d, d-2,...\}$. Симметрично,
антистандартный кубильяж характеризуется тем, что кроме вершин обода содержит
вершину $c^{an}$ со спектром $\{d+1, d-1,...\}$. \hfill $\Box$
 \end{prop}

Утверждение про два кубильяжа капсида  можно получить также из Предложения 3.
Последнее говорит, что задать кубильяж капсида $Z(d+1,d)$ -- это то же самое,
что задать мембрану в кубе  $C=Z(d,d)$. Но у куба есть всего две мембраны:
видимая часть границы $\partial_-C$ и невидимая часть границы $\partial_+C$.
Соответственно, у капсида $Z(d+1,d)$ есть два кубильяжа. Один получается
$(d+1)$-экспансией задней мембраны $\partial _+C$; это стандартный кубильяж.
Другой (антистандартный) получается путем экспансии передней мембраны
$\partial_-C$. См. рис. 7.

\begin{figure}
\begin{center}
\includegraphics[scale=.23]{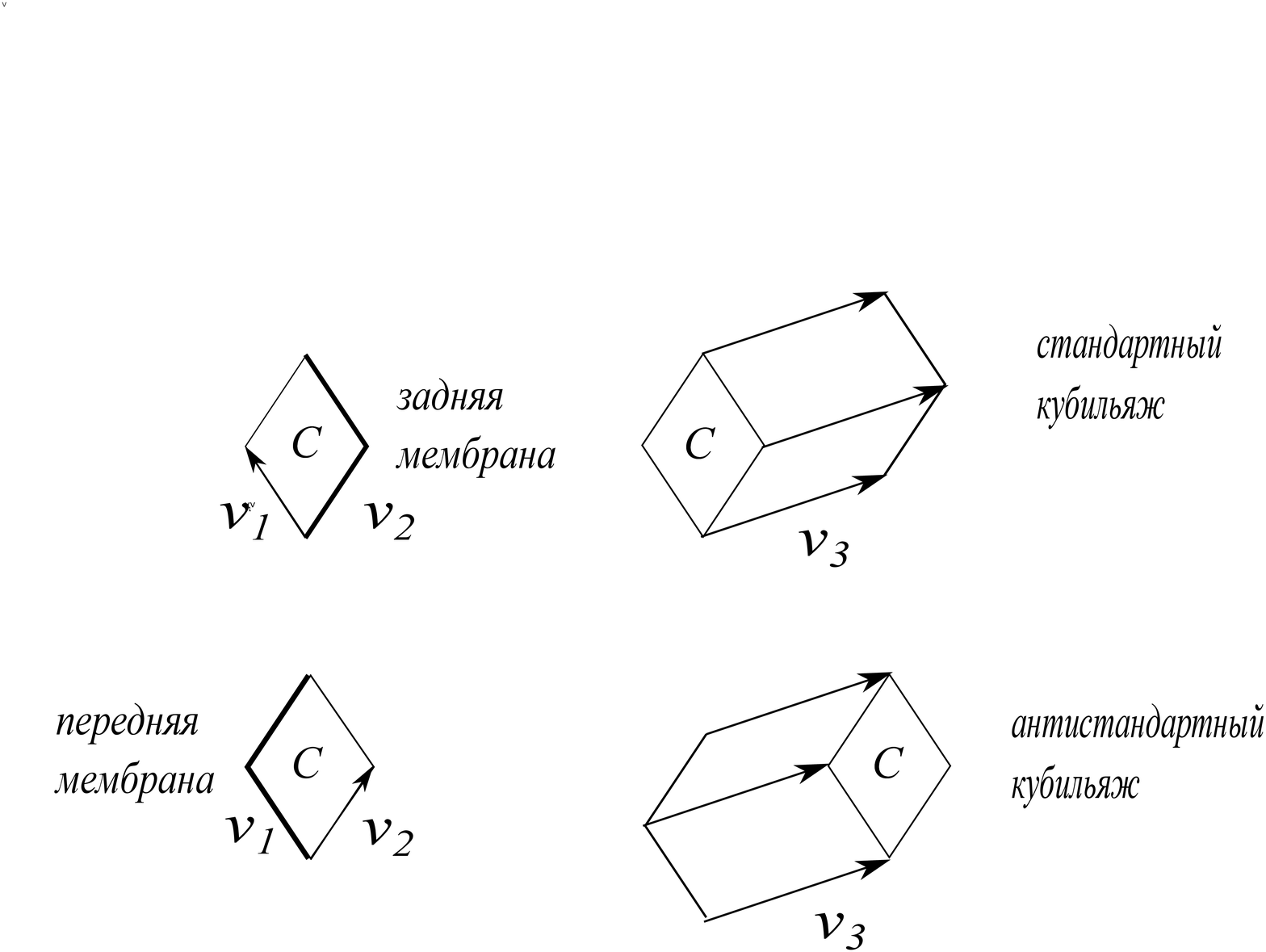}
\end{center}
  \caption{экспансия мембран куба}
 \label{fig:7}
  \end{figure}

Предположим теперь, что у нас имеется некоторый кубильяж ${\mathcal Q}$
произвольного зонотопа $Z(n,d)$, а в нем фрагмент  вида  $Z(d+1,d)$ (то есть
граница этого капсида является подкомплексом комплекса ${\mathcal Q}$).
Ограничение кубильяжа ${\mathcal Q}$ на этот капсид  $Z(d+1,d)$ является
стандартным или антистандартным кубильяжем. Замена кубильяжа этого фрагмента
противоположным кубильяжем называется {\em флипом} ({\em  повышающим}, когда
стандатрный фрагмент заменяется антистандартным, и      {\em понижающим} в
противном случае). Флип меняет кубильяж ${\mathcal Q}$ на некоторый новый
кубильяж ${\mathcal Q}'$, отличающийся от ${\mathcal Q}$ лишь локально, внутри
капсида.

Связь флипов с флопами, рассмотренными в предыдущем разделе, очевидна.
Предположим: что кубильяж ${\mathcal Q}$ зонотопа  $Z(n,d)$ получен как
проекция мембраны $\mathcal{M}$ в кубильяже $\widehat{{\mathcal Q}}$ "большего"
зонотопа $\widehat{Z}=Z(n, d+1)$. И мы делаем (скажем, повышающий) флоп с этой
мембраной, заменяя ее мембраной $\mathcal{M}'$. Тогда проекция $\mathcal{Q}'$
новой мембраны $\mathcal{M}'$ получается применением (повышающего) флипа к
кубильяжу $\mathcal{Q}'$.

Как мы увидим ниже, флипов  много, хотя бы в том смысле, что от любого
кубильяжа ${\mathcal Q}$ можно добраться до стандартного кубильяжа ${\mathcal
Q}_{st}$ с помощью понижающих флипов. Однако независимо       от этого факта
можно ввести понятие \emph{порядка} на множестве ${\bf Q}(n,d)$. А  именно,
${\mathcal Q} \le {\mathcal Q}'$, если от кубильяжа ${\mathcal Q}$ до кубильяжа
${\mathcal Q}'$ можно добраться серией повышающих флипов. Чтобы показать, что
флипов `много', полезно ввести важное понятие естественного порядка на
множестве кубов произвольного кубильяжа.

%--------------SEC 9
           \section{Порядок кубов в кубильяже} \label{sec:order_in_cubil}

Пусть ${\mathcal Q}$ -- кубильяж (циклического) зонотопа $Z(n,d)\subset
{\mathbb R}^d$. Оказывается, можно частично упорядочить некоторым естественным
образом кубы этого кубильяжа. Говоря грубо -- по росту  $d$-й координаты. Но
более правильно -- по тому, располагается ли  один куб за другим при смотрении
в направлении $d$-й координаты.

Пусть $Q$ -- некоторый куб кубильяжа ${\mathcal Q}$, и $F$ -- его фасета.
Скажем, что фасета $F$ куба $Q$ \emph{видимая} (или освещенная), если
существует прямая (в ${\mathbb R}^d$), параллельная координатному       вектору
$e_d$,  которая сначала пересекает $F$ и лишь после этого входит в куб $Q$.
Противоположная (к $F$ в $Q$) фасета называется \emph{невидимой} или
затененной. Очевидно, что из $2d$ фасет куба половина видимые, а  половина
затенненные.

Пусть теперь $Q$ и $Q'$ -- два куба кубильяжа ${\mathcal Q}$.  Скажем, что куб
$Q$ {\em непосредственно предшествует} кубу $Q'$ (и пишем  $Q\prec Q'$), если
$Q$ и $Q'$ соседние по фасете $F$, которая невидима для $Q$ и видима  для $Q'$.
Так на ${\mathcal Q}$ возникает бинарное отношение $\prec $ (или $\prec
_{\mathcal Q}$, если быть точнее).

\begin{figure}[h]
\begin{center}
\includegraphics[scale=0.4]{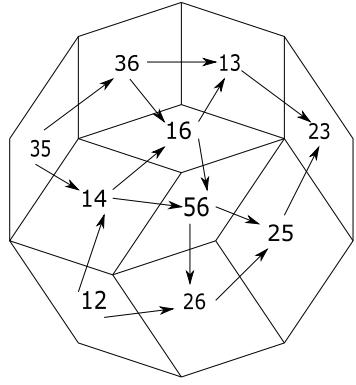}
\end{center}
\caption{Отношение $\prec$ на кубильяже зоногона $Z(5,2)$.}
 \label{fig:8}
  \end{figure}

 Рисунок \ref{fig:8} подсказывает следующее обобщение:

 \begin{prop} \label{pr:6} Отношение $\prec $ ациклично.
 \end{prop}

Это утверждение, как и следующая Лемма о реверсе доказываются в Дополнении 4.
Лемма относится к следующей ситуации.  Пусть ${\mathcal P}={\mathcal P}_n$ -
перегородка      старшего цвета $n$ в      кубильяже ${\mathcal Q}$ зонотопа
$Z=Z(n,d)$. Редукция этой перегородки дает      кубильяж $\widetilde{{\mathcal
Q}}$ зонотопа $\widetilde{Z}=Z(n-1,d)$ и шов-мембрану       ${\mathcal M}$ в
$\widetilde{{\mathcal Q}}$. Проектируя      ${\mathcal M}$ вдоль $d$-ой
координатной оси $e_d$, мы получаем      кубильяж ${\mathcal Q}'=\pi ({\mathcal
M})$ зонотопа $       Z'=Z(n,d-1)$.

Пусть теперь $Q$ и $R$ -- два куба из перегородки ${\mathcal P}$, связанные
отношением непосредственного предшествования, $R\prec Q$. При редукции  кубы
$Q$ и $R$ превращаются в пластинки-фасеты $\gamma (Q)$ и       $\gamma (R)$
мембраны ${\mathcal M}$; их образы при проекции $\pi $ дают кубы $Q'$ и $R'$
кубильяжа ${\mathcal Q}'$. Очевидно, что $ Q'$ и $R'$ тоже соседние в кубильяже
${\mathcal Q}'$, и мы  можем сравнить их      отношением непосредственного
предшествования $\prec '$ в кубильяже  ${\mathcal Q}'$.

 \begin{lemma} \label{lm:2} \rm{(о реверсе).}
\emph{Если кубы  $R$ и $Q$ лежат в перегородке ${\mathcal P}$ и $R\prec Q$ , то $Q' \prec' R'$.}
 \end{lemma}

\begin{figure}[h]
\begin{center}
\includegraphics[scale=0.2]{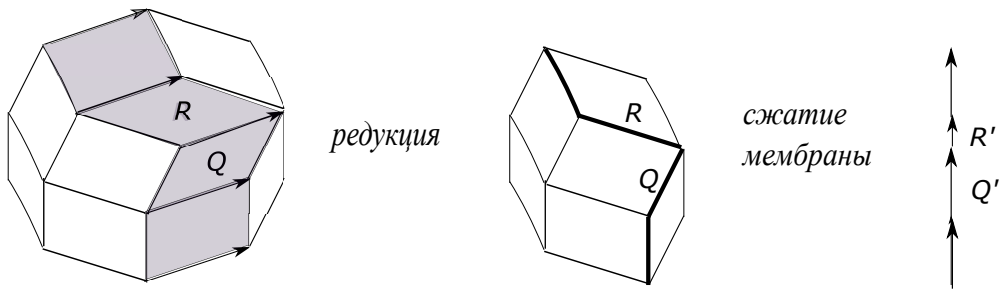}
\end{center}
\caption{\small На рисунке слева для кубов $R$ и $Q$ из перегородки выполнено $R\prec Q$.
Справа нарисован кубильяж $\mathcal{Q'}$, полученный сжатием перегородки.
Для соотвествующих кубов $R'$ и $Q'$ выполнено противоположное соотношение $Q' \prec R'$.}
 \label{fig:9}
  \end{figure}

Непосредственным следствием Леммы о реверсе является то, что \emph{ограничение
отношения $\prec $ на перегородку ${\mathcal P}$  (цвета $n$) противоположно
отношению $\prec '$ для кубильяжа} ${\mathcal Q}'$ (здесь мы отождествляем
множество ${\mathcal P}$ с ${\mathcal Q}'$ через $P \mapsto \gamma (P) \mapsto
P'$).\medskip

{\bf Определение.} {\em  Естественным порядком} на кубильяже ${\mathcal Q}$
называется рефлек\-сивно-транзитивное замыкание отношения $\prec $; мы
обозначаем его $\preceq $ или $\preceq_{\mathcal Q}$.\medskip

По определению это отношение $\preceq $ является предпорядком;  нетривиальность
в том, что оно является \emph{порядком}, то есть антисимметрично ($Q\preceq Q'$
и $Q'\preceq Q$ влекут  $Q=Q'$). Именно это и дает Предложение 6.\medskip

{\bf Пример 1.}  Пусть $Q$ и $Q'$ -- два куба из одного туннеля, и $Q'$ идет
после $Q$ при движении от видимой границы зонотопа к невидимой. Тогда $Q\preceq
Q'$.\medskip

{\bf Пример 2.} Пусть куб $Q$ `частично затеняет' $Q'$, то есть имеется прямая,
параллельная координатному вектору $e_n$, которая протыкает (пересекает по
внутренней точке) куб $Q$ раньше, чем $Q'$. Тогда $Q\preceq Q'$. В самом деле,
надо рассмотреть цепочку кубов $Q_1,...,Q_k$, которые эта прямая пересекает по
пути от $Q$ к $Q'$. Тогда $Q\prec Q_1\prec  ...\prec Q_k\prec Q'$.

Отметим, что мы могли бы начать с этого более сильного  отношения затенения и
(после транзитивного замыкания) получить то же отношение $\preceq $.\medskip

{\textbf{Пример 3.} Пусть куб $Q'$ располагается непосредственно после куба $Q$
в некоторой гирлянде (то есть голова $Q$ совпадает с хвостом $Q'$, см. раздел
7). Тогда $Q$ затеняет $Q'$ и по предыдущему примеру $Q\preceq Q'$. }\medskip

Пусть ${\mathcal Q}$ -- кубильяж, а ${\mathcal Q}_{-i}$ -- редукция ${\mathcal
Q}$ по цвету $i$. Как множество кубов  ${\mathcal Q}_{-i}$ отождствляется с
${\mathcal Q}-{\mathcal P}_i$, так что можно рассматривать ${\mathcal Q}_{-i}$
как подмножество в ${\mathcal Q}$.

 \begin{prop} \label{pr:7}
Ограничение порядка $\preceq $ на ${\mathcal Q}_{-i}$ сильнее, чем порядок
$\preceq _{-i}$ на  ${\mathcal Q}_{-i}$.
 \end{prop}

Иначе говоря, если для кубов $Q$ и $R$ из  редуцированного кубильяжа ${\mathcal
Q}_{-i}$ выполнено соотношение $Q\preceq _{-i}R$, то $Q\preceq R$.

Для доказательства мы должны проверить, что если $Q$ непосредственно
предшествует $R$ в ${\mathcal Q}_{-i}$, то $Q\preceq R$. Соотношение $Q\prec
_{-i} R$ означает, что кубы $Q$ и $R$ соседние в ${\mathcal Q}_{-i}$  по фасете
$F$, причем $Q$ располагается раньше. Возможны два  случая. Первый -- фасета
$F$ не является пластиной шва $\mathcal S$ редукции. Тогда $Q$ и $R$ соседние
уже в ${\mathcal Q}$ и все очевидно. Второй -- фасета $F$ принадлежит шву. Но
тогда она получилась редукцией цвета $i$ в кубе $S=F+[0,v_i]$. И очевидно, что
(в кубильяже ${\mathcal Q}$) мы имеем соотношения $Q\prec S\prec R$. См.
рисунок \ref{fig:10}.

  \begin{figure}[htb]
\unitlength=1mm
\special{em:linewidth 0.4pt}
\linethickness{0.4pt}
\begin{picture}(120.00,35)
\put(25,10){\line(-1,1){10}}
\put(35,20){\line(-1,1){10}}
\put(50,25){\line(-1,1){10}}
\put(25,30){\line(-1,-1){10}}
\put(35,20){\line(-1,-1){10}}
\put(35,20){\line(3,1){15}}
\put(25,30){\line(3,1){15}}
\put(24.00,20.00){\makebox(0,0)[cc]{$Q$}}
\put(30.00,24.00){\makebox(0,0)[cc]{$F$}}
\put(37.00,28.00){\makebox(0,0)[cc]{$R$}}
\put(80,30){\line(-1,-1){10}}
\put(90,20){\line(-1,-1){10}}
\put(80,10){\line(-1,1){10}}
\put(120,20){\line(-1,1){10}}
\put(90,20){\line(-1,1){10}}
\put(105,15){\line(-1,1){10}}
\put(95,25){\line(3,1){15}}
\put(105,15){\line(3,1){15}}
\put(105.00,15.00){\vector(-3,1){15.00}}
\put(95.00,25.00){\vector(-3,1){15.00}}
\put(38.00,5.00){\makebox(0,0)[cc]{$\mathcal{Q}_{-i}$}}
\put(96.00,5.00){\makebox(0,0)[cc]{$\mathcal{Q}$}}
\put(96.00,15.00){\makebox(0,0)[cc]{$v_i$}}
\put(80.00,20.00){\makebox(0,0)[cc]{$Q$}}
\put(107.00,23.00){\makebox(0,0)[cc]{$R$}}
\put(93.00,22.5){\makebox(0,0)[cc]{$S$}}
\end{picture}
  \caption{}
 \label{fig:10}
  \end{figure}

%--------------SEC 10
           \section{Порядок на кубильяжах капсида} \label{sec:order_in_kapsid}

Наиболее просто Предложение 7 работает для стандартного кубильяжа. Дело в том,
что редукция $\widetilde{\mathcal Q}={\mathcal Q}_{-n}$ стандартного кубильяжа
${\mathcal Q}={\mathcal Q}_{st}$ зонотопа $Z(n,d)$ по цвету $n$ является
стандартным кубильяжем (зонотопа $Z(n-1,d)$). Более того, сжатие перегородки
${\mathcal P}={\mathcal P}_n$ цвета  $n$ в ${\mathcal Q}$ дает антистандартный
кубильяж зонотопа      $Z'=Z(n-1,d-1)$. Этими  соображениями можно
воспользоваться, чтобы полностью описать  естественный порядок на стандартном
кубильяже. Мы проделаем это упражнение для случая $n=d+1$, то есть для капсида.

Пусть ${\mathcal Q}$ -- стандартный кубильяж капсида  $Z=Z(d+1,d)$.
Редуцированный кубильяж $\widetilde{{\mathcal Q}}$ -- это тривиальный кубильяж
куба $\widetilde{Z}=Z(d,d)$. Перегородка ${\mathcal P}$ (цвета $n=d+1$)
примыкает к невидимой (задней) части $\partial _-(\widetilde{Z})$ границы куба
$\widetilde{Z}$; более точно, она получается суммированием этой задней части
границы с отрезком $[0,v_n]$. Поэтому кубы из перегородки имеют вид
$F+[0,v_n]$, где $F$ пробегает  невидивые фасеты куба $\widetilde{Z}$.
{(Посмотрите на рис. 6 и 7.)} Видно, что все они $\succ$  куба $\widetilde{Z}$.
И остается разобраться с порядком на кубах перегородки ${\mathcal P}$. Но, как
мы знаем из Леммы о реверсе, этот порядок противоположен порядку кубильяжа,
полученного проекцией $\pi $ (сжатием) задней мембраны $\partial
_+(\widetilde{Z})$, то есть антистандартного кубильяжа `меньшего' капсида
$Z'=Z(d,d-1)$. Так что порядок на кубах из ${\mathcal P}$ -- это в точности
порядок на стандартном кубильяже капсида $Z(d,d-1)$. Индукцией по $d$ мы
получаем

 \begin{prop} \label{pr:8}
Естественный порядок на стандартном кубильяже капсида $Z(d+1,d)$ -- полный (или линейный).
 \end{prop}

\begin{figure}[htb]
\unitlength=.6mm
\special{em:linewidth 0.4pt}
\linethickness{0.4pt}
\begin{picture}(80.00,42.00)(-50,0)
\put(60,45){\line(2,-1){20}}
\put(40,35){\line(2,-1){20}}
\put(40,15){\line(2,-1){20}}
\put(60,5){\line(0,1){20}}
\put(40,15){\line(0,1){20}}
\put(80,15){\line(0,1){20}}

\put(60,5){\line(2,1){20}}
\put(40,35){\line(2,1){20}}
\put(60,25){\line(2,1){20}}
\put(50.00,20.00){\makebox(0,0)[cc]{12}}
\put(70.00,20.00){\makebox(0,0)[cc]{23}}
\put(60.00,35.00){\makebox(0,0)[cc]{13}}
\put(52.00,23.00){\vector(2,3){6.00}}
\put(62.00,32.00){\vector(3,-4){6.67}}
\put(53.00,19.00){\vector(1,0){13.00}}
\end{picture}
 \caption{Порядок на стандартном тайлинге зоногона Z(3,2).}
 \label{fig:11}
  \end{figure}

Иначе говоря, это цепь из $n=d+1$ элемента. Более того, мы      видим, что уже
отношение $\prec $ в этом случае транзитивно. Куб $\widetilde{Z}$ --
минимальный элемент этого порядка. Заметим, что его тип равен  $[d] = [n]-n$.
Та же индукция, что и выше, показывает, что в терминах типов кубов порядок на
стандарном кубильяже капсида $Z(d+1,d)$  совпадает с лексикографическим
порядком $\prec _{lex}$ на множестве      $Gr([d+1],d)$ подмножеств размера $d$
в $[d+1]$. Например, при  $d=6$ этот порядок выглядит так:
           $$
                12345 \prec  12346 \prec  12356 \prec  12456 \prec       13456 \prec  23456.
             $$

Для антистандартного кубильяжа капсида  порядок $\prec $ тоже линейный, но в
терминах типов меняется на  противоположный, антилексикографический, $\prec
_{antilex}$.

Мы оставляем как упражнение разбор следующего по сложности случая: стандартного
кубильяжа зонотопа $Z(d+2,d)$. Ответ для стандартного кубильяжа зонотопа
$Z(6,4)$ изображен на рис. 12.\footnote{Отметим аналогию с рисунками колчанов в
\cite[Table 1]{OT}.}

\begin{figure}[h]
\unitlength=.75mm
\special{em:linewidth 0.4pt}
\linethickness{0.4pt}
\begin{picture}(110.00,55)(-35,0)
\put(31.00,10.00){\makebox(0,0)[cc]{1234}} \put(50.00,10.00){\makebox(0,0)[cc]{1236}}
\put(70.00,10.00){\makebox(0,0)[cc]{1256}} \put(90.00,10.00){\makebox(0,0)[cc]{1456}}
\put(110.00,10.00){\makebox(0,0)[cc]{3456}} \put(40.00,20.00){\makebox(0,0)[cc]{1235}}
\put(60.00,20.00){\makebox(0,0)[cc]{1246}} \put(80.00,20.00){\makebox(0,0)[cc]{1356}}
\put(100.00,20.00){\makebox(0,0)[cc]{2456}} \put(50.00,30.00){\makebox(0,0)[cc]{1245}}
\put(70.00,30.00){\makebox(0,0)[cc]{1346}} \put(90.00,30.00){\makebox(0,0)[cc]{2356}}
\put(60.00,40.00){\makebox(0,0)[cc]{1345}} \put(80.00,40.00){\makebox(0,0)[cc]{2346}}
\put(70.00,50.00){\makebox(0,0)[cc]{2345}}

\put(32.00,12.00){\vector(1,1){6.00}} \put(42.00,22.00){\vector(1,1){6.00}}
\put(52.00,32.00){\vector(1,1){6.00}} \put(62.00,42.00){\vector(1,1){6.00}}
\put(52.00,12.00){\vector(1,1){6.00}} \put(62.00,22.00){\vector(1,1){6.00}}
\put(72.00,32.00){\vector(1,1){6.00}} \put(72.00,12.00){\vector(1,1){6.00}}
\put(82.00,22.00){\vector(1,1){6.00}}
\put(92.00,12.00){\vector(1,1){6.00}}

\put(42.00,18.00){\vector(1,-1){6.00}} \put(62.00,18.00){\vector(1,-1){6.00}}
\put(52.00,28.00){\vector(1,-1){6.00}}
\put(82.00,18.00){\vector(1,-1){6.00}} \put(72.00,28.00){\vector(1,-1){6.00}}
\put(62.00,38.00){\vector(1,-1){6.00}} \put(102.00,18.00){\vector(1,-1){6.00}}
\put(92.00,28.00){\vector(1,-1){6.00}} \put(82.00,38.00){\vector(1,-1){6.00}}
\put(72.00,48.00){\vector(1,-1){6.00}}
\end{picture}
 \caption{Горка}
 \label{fig:12}
  \end{figure}

При рассмотрении естественного порядка на общем кубильяже важную роль играет
следующее соображение. Каждый капсид (то есть подзоногон $Z(d+1,d)$ в кубильяже
${\mathcal Q}$) задает свой тип $K$, подмножество размера $d+1$ в $[n]$. В свою
очередь, пусть $K\subset [n]$ -- произвольное подмножество размера  $d+1$.
Рассмотрим в ${\mathcal Q}$ множество $\mathcal F (K)$, состоящее из кубов
$Q\in {\mathcal Q}$, типы которых содержатся в $K$. Число таких кубов равно
$d+1$, и они  параметризуются своими типами $K-i$, где $i$ пробегает $K$. В
совокупности эти кубы представляют как бы `разбежавшийся'  капсид (настоящий
капсид состоит из кубов, тесно прижатых друг к другу). Редуцируя цвета, не
входящие в $K$, мы `прижмем' эти кубы друг к другу. Более точно, пусть
${\mathcal Q}_K$ обозначает кубильяж, полученный из ${\mathcal Q}$  редукцией
всех цветов, не принадлежащих $K$. Как множество, ${\mathcal Q}_K$
отождествляется с $\mathcal F (K)$. Кубильяж ${\mathcal Q}_K$ является
кубильяжем капсида  $Z(K,d)$ и поэтому либо стандартный, либо антистандартный.
И так как  (см. Пример 3 выше) естественный порядок $\preceq $ на $\mathcal F
(K)$ совпадает с порядком на ${\mathcal Q}_K$, то он во-первых, линейный (то
есть $\mathcal F (K)$ - это  цепь для $\preceq $), и во-вторых, в терминах
типов он либо      лексикографический, либо антилексикографический. Причем
какой --  легко понять. В множестве $K$ как подмножестве $[n]$ есть
максимальный элемент, пусть это $k$. Тогда все определяется тем -- будет ли куб
$Q$ с типом $K-k$  расположен раньше перегородки ${\mathcal P}_k$ цвета $k$ в
${\mathcal  Q}$ (и тогда $Q$ -- минимальный элемент в $\mathcal F(K)$, а
порядок лексикографический), либо позже перегородки (и тогда куб $Q$
максимален, а порядок $\preceq $ на подмножестве $\mathcal F (K)$
антилексикографический).

Итак, ограничения $\preceq $ на такие `капсидо-подобные'  системы $\mathcal F
(K)$  в ${\mathcal Q}$ являются линейными порядками (цепями). В свою очередь,
эти  цепи однозначно определяют $\preceq $. Более точно,

 \begin{prop} \label{pr:9}
Естественный порядок $\preceq $ на кубильяже ${\mathcal Q}$ -- это
транзитивное замыкание цепей $(\mathcal F (K),\preceq _{\mathcal F (K)})$ по
всем $K\subset [n]$ размера $d+1$.
 \end{prop}

Для доказательства достаточно проверить, что если $Q\prec R$ для кубов $Q$ и
$R$ кубильяжа ${\mathcal Q}$, то $Q$ и $R$ встечаются в некотором
`разбросанном' капсиде $\mathcal F (K)$. В самом деле, надо взять в качестве
$K$ объединение типов $Q$ и $R$; так как эти кубы  соседние, размер $K$ равен
$d+1$. \hfill $\Box$ \medskip

Таким образом, естественный порядок $\preceq $ на кубильяже ${\mathcal Q}$
можно было бы определять не через отношение $\prec $ `непосредственного
следования', но через цепи $\mathcal F (K)$ размера $d+1$. Мы вернемся к этому
обстоятельству в комбинаторной части.

%--------------SEC 11
           \section{Стэки и мембраны} \label{sec:stack-membr}

{\bf Определение.} Подмножество $\mathcal S $ кубов в  ${\mathcal Q}$
называется {\em стэком}, если с каждым кубом $Q$  подмножество $\mathcal S $
содержит и меньшие (относительно $\prec$ или $\preceq $) кубы. Иными словами,
это порядковый идеал в посете $({\mathcal Q}, \preceq)$. \emph{Телом} стэка
$\mathcal S $ называется подмножество в $Z=Z(n,k)$, являющееся объединением
кубов, входящих в $\mathcal S $, плюс множество точек видимой границы зонотопа
$Z$.\medskip

Интерес стэков вызван тем, что они задают мембраны кубильяжа ${\mathcal Q}$. В
самом деле, любая прямая, параллельная $d$-ой оси координат, проходя внутри
зонотопа $Z$, сначала идет по телу стэка $\mathcal S$, потом в какой-то момент
пересекает границу стэка и уже не возвращается в стэк. Множество этих
`последних' точек стэка образует мембрану ${\mathcal M}={\mathcal M}(\mathcal S
)$ в кубильяже ${\mathcal Q}$. В самом деле, оно состоит из фасет кубильяжа
${\mathcal Q}$ и биективно  проектируется на $Z'=\pi (Z)$. Обратно, если
${\mathcal M}$ --  мембрана кубильяжа ${\mathcal Q}$, и мы соберем все кубы,
расположенные до мембраны, мы получим стэк $\mathcal S ({\mathcal M})$.
Очевидно, что эти операции взаимно обратны, и мы получаем

\begin{prop} \label{pr:10}
Множество ${\bf S}({\mathcal Q})$ стэков в кубильяже ${\mathcal Q}$ и множество
мембран ${\bf M}({\mathcal Q})$ естественно биективны. \hfill$\Box$
 \end{prop}

\noindent {\bf Следствие.} {\em Зонотоп $Z(d+2,d)$ имеет $2(d+2)$ различных кубильяжей.}\medskip

 \begin{proof}
Согласно Предложению~\ref{pr:3} из раздела 6, задать кубильяж зонотопа
$Z(d+2,d)$ -- это то же самое, что задать мембрану в некотором кубильяже
капсида $Z(d+1,d)$. У последнего имеется всего два кубильяжа -- стандартный и
антистандартный.  И на каждом из них естественный порядок является цепью (длины
$d+1$).  Соответственно, имеется $d+2$ стэка, а в силу предыдущего предложения
-- $d+2$ мембраны.
 \end{proof}

Легко понять, что {посет $\textbf{Q}(d+2,d)$ состоит из двух ветвей кольца размера $2(d+2)$, как на рисунке \ref{fig:13}.}

\begin{figure}[h]
\begin{center}
\includegraphics[scale=0.3]{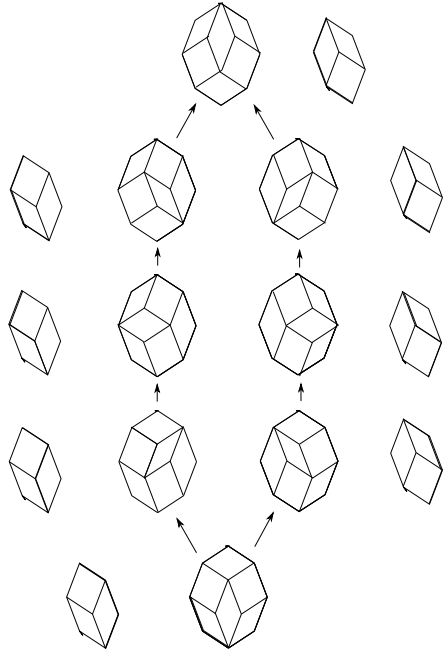}
\end{center}
  \caption{\small "Кольцо". Изображены восемь кубильяжей зоногона $Z(4,2)$. Стрелки
указывают повышающие флипы. Сбоку от каждого кубильяжа помещен кубильяж
$Z(3,2)$ и мембрана в нем, задающие (см. Предложение~\ref{pr:4})
соответствующий кубильяж $Z(4,2)$.}
 \label{fig:13}
  \end{figure}

Мембраны в кубильяже ${\mathcal Q}$ естественно упорядочены: мы говорим, что
мембрана ${\mathcal M}$ кубильяжа ${\mathcal Q}$ встречается \emph{раньше}, чем
мембрана ${\mathcal M}'$, если любая прямая, параллельная $e_d$, пересекает
${\mathcal M}$ не позже, чем ${\mathcal M}'$. В терминах стэков это в точности
отношение включения соответствующих подмножеств в ${\mathcal Q}$). Мы получаем
отсюда следующие  утверждения.

 \begin{prop} \label{pr:11}
{\rm a)} Множество мембран ${\bf M}({\mathcal Q})$ кубильяжа ${\mathcal Q}$ образует дистрибутивную решетку.

{\rm b)} Эта решетка градуирована (как и любая дистрибутивная, см.
\cite{Sta-1}, 3.4); ранг мембраны ${\mathcal M}$ равен числу элементов в стэке
$\mathcal S ({\mathcal M})$.

{\rm c)} Для любой мембраны ${\mathcal M}$, если она не максимальна, существует
куб $Q\in {\mathcal Q}$, который всеми своими видимыми фасетами входит в
мембрану ${\mathcal M}$. Добавление этого куба приводит к мембране ${\mathcal
M}'$, которая отличается от ${\mathcal M}$ повышающим флопом. \hfill$\Box$
  \end{prop}

Таким образом, мы видим, что  любую мембрану ${\mathcal M}$ в ${\mathcal Q}$
можно соединить с минимальной мембраной (видимой  частью границы зонотопа)
понижающими флопами (а с максимальной мембраной -- повышающими флопами). А это
означает также, что кубильяж $\pi ({\mathcal M})$ зонотопа $Z'=Z(n,d-1)$ можно
превратить в стандартный кубильяж зонотопа $Z'$ серией понижающих флипов.

%--------------SEC 12
          \section{Существование флипов}\label{sec:exist_flip}

Понятие флипа уже вводилось в разделе 8 как замена одного кубильяжа капсида
$Z(d+1,d)$ в $Z$ другим кубильяжем. Здесь  мы покажем существование флипов.

 \begin{theorem} \label{tm:1}
Пусть ${\mathcal Q}$ -- кубильяж  зонотопа $Z=Z(n,k)$. Если кубильяж ${\mathcal
Q}$ отличен от стандартного, то в нем можно сделать понижающий флип.
  \end{theorem}

Иными словами, если в кубильяже нельзя сделать понижающий флип, то он
стандартный. Верно, конечно, и обратное: стандартный  кубильяж не допускает
понижающих флипов. Это видно, например, из  того, что стандартный кубильяж
реализуется видимой, или передней, мембраной в (любом) кубильяже зонотопа $\hat
Z^=Z(n,d+1)$, и  `отступать назад' от такой мембраны уже некуда. \medskip

\begin{proof}
Рассмотрим перегородку ${\mathcal P}$ цвета $n$ в ${\mathcal Q}$. Если она
проходит вплотную к невидимой стороне $Z$, мы просто `выбросим' ее как из
${\mathcal Q}$, так и из $Z$, и получим кубильяж ${\mathcal Q}-{\mathcal P}$
зонотопа  $Z(n-1,d)$. Если этот кубильяж ${\mathcal Q}-{\mathcal P}$
стандартный, тогда и      ${\mathcal Q}$ - стандартный, вопреки  предположению.
Поэтому кубильяж ${\mathcal Q}-{\mathcal P}$  зонотопа $Z(n-1,d)$ не
стандарный, и по индукции мы можем сделать в нем (а поэтому и в  $Z$)
понижающий флип.

Таким образом мы можем считать, что перегородка ${\mathcal P}$ не примыкает  к
невидимой части границы зонотопа $Z$, и за этой перегородкой  реально есть
кубы. Возьмем среди таких кубов (строго за  перегородкой ${\mathcal P}$)
минимальный куб $Q$ относительно естественного  порядка $\preceq $ на
${\mathcal Q}$. Этот куб всей видимой  стороной примыкает к перегородке
${\mathcal P}$. И если мы добавим к этому кубу $Q$ все кубы из      перегородки
${\mathcal P}$, которые примыкают к $Q$, мы получим желаемый подзонотоп $Z_0$
(капсид). В самом деле, эти кубы устроены как суммы отрезка $[-v_n,0]$ на
видимые фасеты $Q$. И мы получаем      подзонотоп типа $Bn$, где $B$ - тип куба
$Q$. Это как раз такой набор кубов, с которым можно делать понижающий флип, см.
раздел 8.
\end{proof}

В результате такого понижающего флипа с участием цвета $n$ мы передвигаем
(отодвигаем) перегородку цвета $n$ `на один куб' по направлению к  невидимой
границе зонотопа $Z$. Двигаясь так шаг за шагом, мы  передвинем эту перегородку
вплотную к невидимой границе $Z$. Однако  мы могли бы сделать это сразу, за
один `большой' шаг. Для этого обозначим через ${\mathcal Q}_+$ множество кубов
нашего кубильяжа  ${\mathcal Q}$, расположенных ЗА перегородкой ${\mathcal
P}={\mathcal P}_n$. После этого `сдинем' весь этот  набор кубов ${\mathcal
Q}_+$ на вектор $-v_n$. В результате это множество  передвинется вплотную к
передней (видимой) границе перегородки  ${\mathcal P}$ и      образует кубильяж
зонотопа $Z(n-1,d)$. Если мы добавим к этому  зонотопу произведение (сумму) его
невидимой границы на отрезок  $[0,v_n]$ (иными словами, сделаем эспансию этого
кубильяжа вдоль `задней'  мембраны зонотопа $Z(n-1,k)$ по цвету $n$), мы
получим  исходной зонотоп $Z$ и новый его кубильяж, в котором новая
перегородка ${\mathcal P}'$ цвета $n$ примыкает к невидимой  стороне $Z$. Такую
`крупную'  перестройку кубильяжа можно было бы назвать `большим  флипом', или
\emph{обвалом}, см. рис. 14.

 \begin{figure}[htb]
\unitlength=.6mm
%\special{em:linewidth 0.4pt}
\linethickness{0.4pt}
\begin{picture}(150.00,75.00)(-46,0)

\bezier{328}(45.00,5.00)(0.00,25.00)(35.00,60.00)
\put(45.00,5.00){\vector(2,1){10.00}}
\put(35.00,60.00){\vector(2,1){10.00}}
\bezier{120}(35.00,60.00)(45.00,50.00)(40.00,35.00)
\bezier{156}(40.00,35.00)(30.00,20.00)(45.00,5.00)

\put(27,33){$\mathcal{Q}_-$}
\put(43,33){$\mathcal{P}$}
\put(55,30){$\mathcal{Q}_+$}

\bezier{328}(110.00,5.00)(65.00,25.00)(100.00,60.00)
\put(110.00,5.00){\vector(2,1){10.00}}
\put(100.00,60.00){\vector(2,1){10.00}}
\bezier{120}(100.00,60.00)(110.00,50.00)(105.00,35.00)
\bezier{156}(105.00,35.00)(95.00,20.00)(110.00,5.00)

\put(40.00,10.00){\vector(2,1){10.00}}
\put(37.00,16.00){\vector(2,1){10.00}}
\put(36.00,22.00){\vector(2,1){10.00}}
\put(37.00,29.00){\vector(2,1){10.00}}
\put(40.00,35.00){\vector(2,1){10.00}}
\put(42.00,42.00){\vector(2,1){10.00}}
\put(42.00,49.00){\vector(2,1){10.00}}
\put(39.00,55.00){\vector(2,1){10.00}}

\bezier{136}(45.00,65.00)(55.00,53.00)(50.00,40.00)
\bezier{148}(50.00,40.00)(40.00,26.00)(55.00,10.00)
\bezier{356}(45.00,65.00)(90.00,40.00)(55.00,10.00)

\put(92,33){$\mathcal{Q}_-$}
\put(106,28){$\mathcal{Q}_+\!\!-\!\! v_n$}
\put(129,33){$\mathcal{P}'$}

\put(49,4){$v_n$} \put(114,4){$v_n$}
\put(73,33){$\longrightarrow$}

\put(115.00,10.00){\vector(2,1){10.00}}
\put(120.00,16.00){\vector(2,1){10.00}}
\put(124.00,22.00){\vector(2,1){10.00}}
\put(126.00,29.00){\vector(2,1){10.00}}
\put(125.00,35.00){\vector(2,1){10.00}}
\put(121.00,42.00){\vector(2,1){10.00}}
\put(116.00,49.00){\vector(2,1){10.00}}
\put(108.00,55.00){\vector(2,1){10.00}}

\bezier{356}(100.00,60.00)(145.00,34.00)(110.00,5.00)
\bezier{356}(110.00,65.00)(155.00,40.00)(120.00,10.00)
\end{picture}
 \caption{обвал}
 \label{fig:14}
  \end{figure}

В результате такого обвала мы  получаем кубильяж $\widetilde{{\mathcal Q}}$
зонотопа $\widetilde{Z}=Z(n-1,d)$. С ним можно проделать  такую же операцию
подъема перегородки цвета $n-1$, и так далее. Эту  последовательность из $n$
обвалов можно назвать {\em стандартизацией} кубильяжа ${\mathcal Q}$, потому
что после каждого шага (обвала) кубильяж  становится `все более стандартным', а
в конце процесса --  полностью стандартным. Главное преимущество стандартизации
-- ее  каноничность; если понижающие флипы мы могли делать в разных  местах и в
разной последовательности, то обвалы определены  однозначно.

Симметрично можно делать `антистандартизацию'  кубильяжа,  прижимая (на первом
шаге) перегородку ${\mathcal P}$ к видимой  границе зонотопа  $Z(n,d)$ и
продолжая в таком же духе; в результате через $n$ шагов мы  получим
антистандартный кубильяж.

Уже говорилось, как с помощью флипов вводится порядок $\le$ на множестве
$\textbf{Q}(n,d)$ кубильяжей зонотопа $Z(n,d)$. А именно, мы полагаем $Q\le
Q'$, если от $Q$ до $Q'$ можно добраться с помощью последовательности
повышающих флипов. Циклов при этом быть не может, так что это отношение $\le$
действительно является отношением частичного порядка на множестве
$\textbf{Q}(n,d)$. Фактически, именно этот посет Манин и Шехтман называли
\emph{высшим порядком Брюа} и обозначали $B(n,d)$. Теорема 1 говорит, что этот
посет обладает минимальным (стандартный кубильяж) и максимальным
(антистандартный кубильяж) элементами. Подобные посеты часто оказываются
решетками. И действительно, первый посет Брюа $B(n,1)$ (т. н. \emph{слабый
порядок Брюа}) является решеткой. При малых $n$ посет $B(n,2)$ тоже решетка
(см. рисунок посета $B(5,2)$ в \cite{FZ}), но уже $B(6,2)$ не является
решеткой, \cite{Z}. Более того, этот посет в общем случае не есть посет вершин
многогранника, \cite{FZ}. И все же это градуированный (ранговый) посет.

%--------------SEC 13
           \section{Мембраны в зонотопе}\label{sec:membr_in_zonotop}

До сих пор мы занимались мембранами в кубильяже.  Теперь мы введем более общее
понятие  {\em мембраны в зонотопе} $Z=Z(n,k)$. Это снова $(d-1)$-мерный
кубический комплекс ${\mathcal M}$, расположенный в $Z(n,k)$, причем выполнены
два свойства (заимствованные у мембран в кубильяжах):

1) ${\mathcal M}$ (точнее, тело ${\mathcal M}$) проектируется биективно на $Z'=Z(n,d-1)$,

2) ребра $M$ конгруэнтны векторам $v_i$ (впрочем, при $d>2$ это выводится из свойства 1) примерно как в Лемме 1).

Проекции кубов из ${\mathcal M}$ дают $(d-1)$-мерный кубильяж  зонотопа $Z'$,
который мы снова обозначаем как ${\mathcal Q}'=\pi (M)$. Обратно, если
${\mathcal Q}'$ --  кубильяж зонотопа $Z'$, мы можем (единственным способом)
построить  мембрану ${\mathcal M}$, для которой ${\mathcal Q}'=\pi ({\mathcal
M}))$. Делается это так. Для любой  вершины $v'$ кубильяжа ${\mathcal Q}'$ мы
задаем вершину $v$ как $\sum _{i\in sp(v')}v_i$. После этого организуем эти
вершины в кубильяж, повторяя организацию в кубы вершин $v'$ кубильяжа
${\mathcal Q}'$. Или чуть иначе: мы строим мембрану ${\mathcal M}$ (точнее --
тело этой мембраны) как      некоторое естественное (кусочно линейное) сечение
зонотопа $Z'$  (сечение относительно проекции $\pi $). Как уже говорилось,
каждая  точки кубильяжа ${\mathcal Q}'$ однозначно записывается через  вектора
$v'_i$; остается переписать это через $v_i$. Тем самым,

      \

{\em  Множество ${\bf M}(n,d)$ мембран в зонотопе $Z=Z(n,d)$  отождествляется с
множеством ${\bf Q}(n,d-1)$ кубильяжей зонотопа $Z(n,d-1)$.}

      \

Поясним на примере $d=3$. Кубильяж зоногона $Z'=Z(n,2)$ - это ромбической
тайлинг, чисто двумерная  фигура. Переход к мембране в $Z=Z(n,3)$ делает ее
более рельефной: это  как переход от плоского чертежа к трехмерному макету.
Флипы,  выглядящие как искусственные перетасовки ромбов, превращаются в  более
наглядные изгибания пленок-мембран. Однако пока наши  мембраны `висят в
воздухе'. Следующее утверждение исправляет этот дефект.

 \begin{theorem} \label{tm:2}
 Для любой мембраны ${\mathcal M}$ зонотопа $Z=Z(n,d)$
существует кубильяж ${\mathcal Q}$ этого зонотопа $Z$, для которого она
является мембраной.
 \end{theorem}

           Иными словами, мембрана ${\mathcal M}$ вписывается в некоторый кубильяж ${\mathcal Q}$.\medskip

\begin{proof} Обозначим через $Z_-({\mathcal M})$ область в $Z$ перед  мембраной ${\mathcal M}$.
Воспользуемся Теоремой 1. Делая с мембраной ${\mathcal M}$ понижающие флопы (точнее, мы делаем флипы с кубильяжем $\pi ({\mathcal M}$), а потом поднимаем их на мембраны), мы получаем серию мембран, идущую от ${\mathcal M}$  до `передней' мембраны, то есть до видимой части границы $Z$. Тем самым мы получаем кубильяж области $Z_-({\mathcal M})$.

Аналогично можно поступить с областью $Z_+({\mathcal M})$ за мембраной; вместе
они дадут кубильяж: в который вписана мембрана ${\mathcal M}$.  \end{proof}

Более  того, если делать каноническую стандартизацию кубильяжа $\pi ({\mathcal
M})$, мы получим канонический (или `стандартный')  кубильяж области
$Z_-({\mathcal M})$ в зоногоне Z. `Стандартность' этого кубильяжа проявляется в
том, что в нем нельзя сделать понижающий флип.

\begin{figure}[h]
\begin{center}
\includegraphics[scale=0.75]{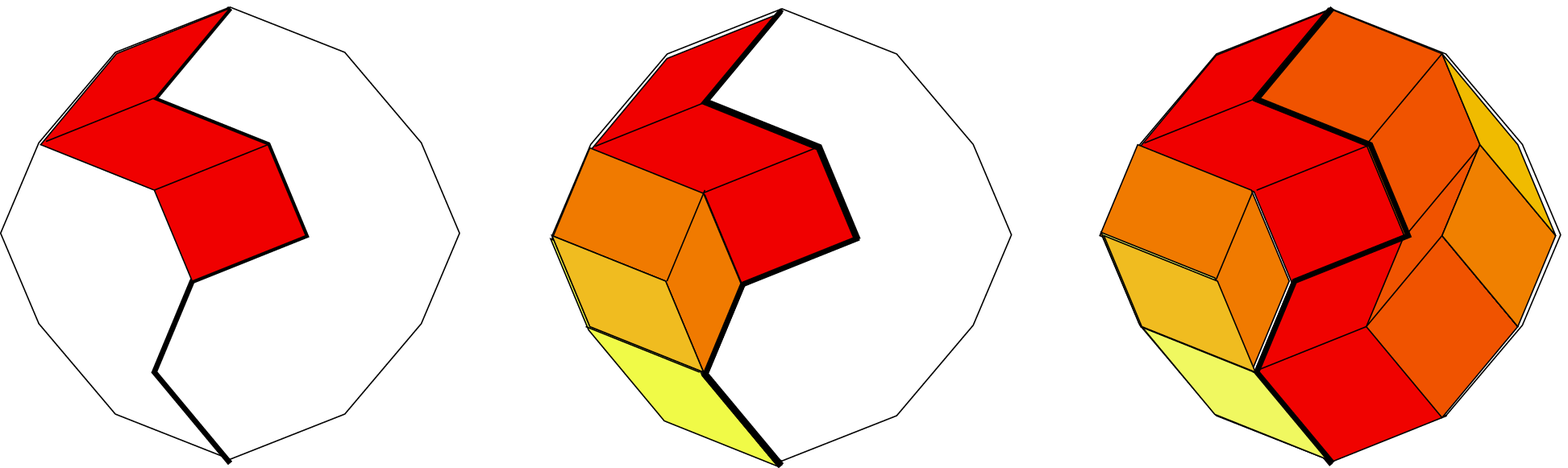}
\end{center}
 \caption{\small Стадии канонизации. Дан ромбический тайлинг и мембрана-змейка
$\mathcal{M}$ в нем. На рис. слева изображен первый шаг стандартизации области
перед мембраной -- построение части перегородки цвета 6. На рис. в центре уже
завершено построение всех перегородок в области перед мембраной, то есть
показана стандартизация этой области. На рис. справа показана
(анти)стандартизация области за мембраной.}
 \label{fig:15}
  \end{figure}

Симметрично можно поступить с областью $Z_+({\mathcal M})$, расположенной в $Z$
после мембраны ${\mathcal M}$. Мы получим каноническое (`антистандартное')
заполнение этой области. Объединяя эти два кубильяжа, мы получаем  кубильяж
${\mathcal Q}$ уже всего зонотопа $Z$, в котором  ${\mathcal M}$ является
мембраной. Мы называем этот стандартно-антистандартный кубильяж зонотопа $Z$
{\em каноническим  расширением} мембраны ${\mathcal M}$ (или {\em каноническим
подъемом} кубильяжа $\pi ({\mathcal M})$).\medskip

{\bf Замечание.} {Рассуждая как при доказательстве Теоремы 1, можно показать,
что если кубильяж области $Z_-({\mathcal M})$ отличен от стандартного (см.
выше), то в нем можно сделать понижающий флип. Это показывает, что кубильяжи
области $Z_-({\mathcal M})$ (как и кубильяжи всего зонотопа) связаны флипами.
Аналогичное замечание верно и для кубильяж области $Z_+({\mathcal
M})$.}\medskip

Мембраны служат как бы средством, связывающим кубильяжи  соседних размерностей.
Кубильяж ${\mathcal Q}$ представляется `сетью'  мембран, а тем самым `сетью'
кубильяжей на 1 меньшей размерности.  Напомним, что ${\bf Q}(n,d)$ -- это
множество кубильяжей  зонотопа $Z(n,d)$. Скажем, что кубильяжи ${\mathcal Q}$
из ${\bf  Q}(n,d)$ и ${\mathcal Q}'$ из ${\bf Q}(n,d-1)$ {\em согласованы},
если $Q'$ поднимается до мембраны ${\mathcal M}$ в ${\mathcal Q}$ (так что
${\mathcal Q}'=\pi ({\mathcal M})$). Отношение согласованности можно
рассматривать как соответствие $c$ из ${\bf Q}(n,d)$ в ${\bf Q}(n,d-1)$.
Описанный выше канонический подъем кубильяжей задает сечение $can: {\bf
Q}(n,d-1) \to {\bf Q}(n,d)$ этого соответствия $c$.

Обилие связей между кубильяжами побуждает взглянуть на них с  категорной точки
зрения. Мы поговорим об этом в Дополнении 1 про  поликатегории.

\

      \

                        \textsc{Часть вторая, комбинаторная}

\

Под комбинаторным подходом к кубильяжам мы понимаем изучение систем подмножеств
базисного множества $[n]$, генерируемых кубильяжами. Мы уже встречались с этим,
когда рассматривали типы кубов кубильяжа. Однако если  брать только типы кубов,
то любой кубильяж  дает все множество-грассманиан ${[n] \choose d}=Gr([n],d)$.
Чтобы отразить специфику конкретного кубильяжа $\mathcal Q $, нужно перенести
на $Gr([n],d)$ естественный порядок $\preceq _\mathcal Q$. Это перебрасывает
мост между геометрическим подходом  и подходом, который использовали Манин с
Шехтманом \cite{MSch-1, MSch-3} и Циглер \cite{Z}. Напомним, что Манин и
Шехтман, создатели высших  порядков Брюа, работали именно в терминах порядков
на $Gr([n],d)$.

Другой выход в комбинаторику открывает понятие спектра. Если  мы соберем в одну
систему $Sp(\mathcal Q )$ спектры всех вершин кубильяжа $\mathcal Q $,  то
получим интересную систему подмножеств в $[n]$. Главное свойство таких систем
--  свойство разделенности, впервые обнаруженное Леклерком и Зелевинским
\cite{LZ} в случае $d=2$ и обобщенное и исследованное затем Галашиним и
Постниковым \cite{Ga, GaP}.

В процессе работы нам постоянно придется иметь дело с подмножествами и
системами подмножеств в $[n]$. Само базисное  множество $[n]$ можно понимать
как аналог $n$-мерного векторного  пространства (над произвольным полем, или
над `полем' ${\bf F}_1$ из одного  элемента). С этой точки зрения подмножество
в $[n]$ размера $k$ надо  понимать как аналог векторного подпространства
размерности $k$.  Из-за этого множество всех подмножеств размера $k$ мы
называем  `дискретным грассманианом' и обозначаем $Gr([n],k)$ (вместо обычного
обозначения ${[n] \choose k}$). Аналогом полного флага подпространств служит
неуплотняемая цепь подмножеств в $[n]$, то есть фактически линейный  порядок на
$[n]$. Напомним, что мы рассматриваем $[n]$ как множество (цепь) с естественным
порядком $(1<2<...<n)$, что означает, что мы как бы фиксируем некоторый полный
флаг.

Для удобства идентификации элементы базисного множества $[n]$ мы называем
\emph{цветами} и используем обычно буквы $i,j$ и т.д.; подмножества в $[n]$
обозначаются буквами $X,Y$ и т.д. и называются  часто просто множествами. Для
краткости множество вида $X\cup \{i\}$ обозначается обычно как $Xi$.
Подмножества в $2^{[n]}$ называются {\em системами множеств} и обозначаются
рукописными буквами (типа $\mathcal S$ или $\mathcal X$).

 %-------------------- SEC 14
           \section{Допустимые порядки}\label{sec:admiss_orders}

Если $Q$ -- куб некоторого кубильяжа $\mathcal Q $ зонотопа $Z(n,d)$, то он
определяет (и определяется) двумя множествами: спектром нижней (корневой)
вершины $b(Q)$, $sp(b(Q))\subset [n]$, и типом $\tau (Q)\subset [n]$. Эти два
подмножества не пересекаются, и второе имеет размер $d$. {Фактически это уже
упоминавшаяся кодировка знаковыми векторами; $Q \mapsto
\frac{\tau(Q)}{sp(b(Q))}$.} Согласно Предложению 1, отображение типа $\tau $
задает биекцию множества $\mathcal Q $ всех кубов кубильяжа $\mathcal Q $ с
грассманианом  $Gr([n],d)$. Если  перенести с помощью этой биекции $\tau $
естественный порядок $\preceq $ на $\mathcal Q $ (см. раздел 9), мы получим
некоторый порядок $\preceq _\mathcal Q$ на грассманиане $Gr([n],d)$.

 \begin{prop} \label{pr:12}
Порядок $\preceq _\mathcal Q $  определяет кубильяж $\mathcal Q $.
 \end{prop}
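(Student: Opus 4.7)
The plan is to recover, from the partial order $\preceq_{\mathcal Q}$ on $Gr([n],d)$, for every $d$-subset $X$ the bottom vertex $b(Q_X)$ of the unique (by Proposition~1) cube $Q_X\in\mathcal Q$ of type $X$. Since $\mathcal Q=\{Q_X:X\in Gr([n],d)\}$ and each $Q_X$ is determined by its type and its bottom vertex, producing the family $\{b(Q_X)\}_X$ from $\preceq_{\mathcal Q}$ suffices.

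I would proceed by induction on $n\ge d$. The base case $n=d$ is trivial, as $\mathcal Q$ consists of a single cube sitting at the origin. For the inductive step, I would use Proposition~4 to factor $\mathcal Q$ as a pair $(\widetilde{\mathcal Q},\mathcal M)$, with $\widetilde{\mathcal Q}=\mathcal Q_{-n}$ a cubillage of $Z(n-1,d)$ and $\mathcal M$ a membrane in $\widetilde{\mathcal Q}$ from which $\mathcal Q$ is obtained by extrusion along $v_n$. The task thus splits into (i) reconstructing $\widetilde{\mathcal Q}$, and (ii) recovering $\mathcal M$ inside $\widetilde{\mathcal Q}$.

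For step (i), I would restrict $\preceq_{\mathcal Q}$ to $\binom{[n-1]}{d}\subset Gr([n],d)$ (the types not containing $n$) and argue that this restriction coincides with $\preceq_{\widetilde{\mathcal Q}}$. Proposition~7 yields one direction (the restriction is coarser). For the opposite direction, every covering relation $X\prec_{\widetilde{\mathcal Q}}Y$ whose common facet is split by the membrane in $\mathcal Q$ is restored as a length-two chain $X\prec_{\mathcal Q}(X\cup\{n\}\setminus\{j\})\prec_{\mathcal Q}Y$ through the adjoining $n$-containing cube, so after admissible (transitive) closure the two orders agree. A cleaner formulation uses Proposition~9: both $\preceq_{\mathcal Q}$ and $\preceq_{\widetilde{\mathcal Q}}$ are determined by their cyclic restrictions to the capsids $\mathcal F(K)$, $K\in\binom{[n]}{d+1}$, and these restrictions agree on every $K\subset[n-1]$. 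The induction hypothesis then reconstructs $\widetilde{\mathcal Q}$.

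For step (ii), I would invoke the bijection of Proposition~11 between membranes in $\widetilde{\mathcal Q}$ and stacks (order ideals) of $(\widetilde{\mathcal Q},\preceq_{\widetilde{\mathcal Q}})$, so that it suffices to decide which cubes $Q_X\in\widetilde{\mathcal Q}$ (types $X\not\ni n$) lie below $\mathcal M$. I would identify these as the cubes such that every adjacent $n$-containing cube $Q_Y$ in $\mathcal Q$ satisfies $X\prec_{\mathcal Q}Y$; the relevant adjacencies appear as covering relations of $\preceq_{\mathcal Q}$ between $n$-avoiding and $n$-containing types (i.e., $Y=(X\setminus\{j\})\cup\{n\}$ for some $j\in X$, or $Y=X\cup\{n\}$). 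This pins down the stack $\mathcal S(\mathcal M)$, hence $\mathcal M$, and combined with $\widetilde{\mathcal Q}$ yields $\mathcal Q$. The main obstacle is precisely the upgrade from ``coarser than'' (all that Proposition~7 gives) to an equality of orders in step (i), and the calibration of the ``below the membrane'' criterion in step (ii); both reduce to bookkeeping of how extrusion along $v_n$ alters covering relations, for which the cyclic decomposition of Proposition~9 is the natural tool.
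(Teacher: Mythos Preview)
Your inductive scheme on $n$ via the decomposition $\mathcal Q\leftrightarrow(\widetilde{\mathcal Q},\mathcal M)$ of Proposition~4 is viable, but it is not what the paper does. The paper's argument is shorter and needs none of Propositions~7, 9 or~11: given two cubillages with the same order $\preceq$, it runs induction on the size of an order ideal $\mathcal S\subset Gr([n],d)$ (a \emph{stack}), showing that the cubes $\{Q_X:X\in\mathcal S\}$ occupy identical positions in both cubillages. At the inductive step one removes a maximal $K\in\mathcal S$; the remaining cubes fill the same region (by induction), its ceiling is a common membrane $\mathcal M'$, and the new cube of type $K$ must sit with its entire lower boundary on $\mathcal M'$, which fixes its location. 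The paper then extracts from this an explicit formula for $b(Q_K)$ in terms of which $(K-i)\cup\{j\}$ lie in $\mathcal S$.

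Regarding your route: in step~(i) you do not actually need equality of $\preceq_{\mathcal Q}\!\!\restriction_{\binom{[n-1]}{d}}$ with $\preceq_{\widetilde{\mathcal Q}}$. You only need the capsid orientations for $K\in\binom{[n-1]}{d+1}$, and those coincide because Proposition~7 makes one order coarser than the other while both restrict to total (lex or antilex) orders on each $\mathcal F(K)$; Proposition~9 then recovers $\preceq_{\widetilde{\mathcal Q}}$. In step~(ii) your ``adjacency'' formulation is imprecise (cubes of types $X$ and $(X\setminus\{j\})\cup\{n\}$ need not share a facet in $\mathcal Q$, and your option $Y=X\cup\{n\}$ has the wrong cardinality). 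The clean criterion, stated in Section~10 just before Proposition~9 and reused in the proof of Theorem~3, is: $Q_X$ lies below $\mathcal P_n$ (equivalently, below $\mathcal M$ after reduction) if and only if $X$ is the minimal element of the chain $\mathcal F(Xn)$ under $\preceq_{\mathcal Q}$. With that fix, your argument goes through; the paper's stack induction is simply more direct.
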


           Иначе говоря, он позволяет однозначно восстановить кубильяж.\medskip

  \begin{proof}
Пусть есть два кубильяжа $\mathcal Q _1$ и $\mathcal Q _2$ зонотопа $Z=Z(n,d)$
с одним и тем же порядком на  $Gr([n],d)$. И пусть $\mathcal S $ --
(порядковый) идеал в $Gr([n],d)$; покажем  индукцией по размеру $\mathcal S $,
что кубы (в $\mathcal Q _1$ и $\mathcal Q _2)$, соответствующие $\mathcal S $
(то есть $\tau ^{-1}(\mathcal S )$), расположены в $Z$ одинаково. Пусть $K\in
Gr([n],d)$ -- максимальный элемент в $\mathcal S $, и  $\mathcal S '=\mathcal S
-\{K\}$. По индуктивному предположению кубы, соотвествующие $\mathcal S '$,
расположены в $Z$ одинаково. И лежат в `передней' части относительно некоторой
мембраны $\mathcal M '$. Тогда куб $Q_1$  (в $\mathcal Q _1$ с типом $K$)
примыкает всей своей видимой границей  к определенному участку (капсиду) этой
мембраны $\mathcal M '$, точно так же, как и куб $Q_2$ (из $\mathcal Q _2$ с
типом $K$). То есть они примыкают  к одному и тому же участку $\mathcal M '$ и
потому совпадают.
      \end{proof}

Можно пояснить это и чуть иначе. Пусть $Q_1$ (соотв., $Q_2$) -- кубы в
кубильяжах $\mathcal Q _1$ (соотв., $\mathcal Q _2$) с одним и тем же типом
(скажем, $K$). И пусть $b_1=b(Q_1)$ и $b_2=b(Q_2)$ -- их корневые вершины.
Достаточно показать, что $b_1=b_2$. Возьмем какую-нибудь видимую фасету $F_1$
куба $Q_1$ (с типом $K-i$ для некоторого $i\in K$), и рассмотрим часть тоннеля
типа $K-i$, идущую от $F_1$ и передней границе зонотопа  $Z$. Мы не знаем, как
проходит этот тоннель, но мы знаем, что он составлен из кубов $Q$, которые a)
лежат в идеале $\mathcal S$, и b) содержат $K-i$ в своем типе $\tau (Q)$. Этот
тоннель выходит к передней части границы $Z$ по единственной фасете
$\widetilde{F}$ типа $K-i$. Обозначим его корневую вершину как $\widetilde{b}$.
Тогда мы получаем, что $b_1= \widetilde{b}+\sum_j (\pm) v_j$, где  сумма
берется по тем $j$, что $K-i+j$ принадлежит $\mathcal S $. Знаки же
определяются как знаки определителя $\det(...v_k...v_j)$, где $k$
пробегает $K-i$. Так как это выражение зависит от $K-i$ и от $\mathcal S $, но
не от $\mathcal Q $, то $b_2=b_1$.\medskip

Полученное утверждение поднимает вопрос о том, какими      условиями выделяются
порядки вида $\preceq _\mathcal Q $. Одно такое условие подсказывает
Предложение 9 раздела 10. Назовем {\em родителем} подмножество $K$ в $[n]$
размера $d+1$, а его {\em семьей} -- систему $\mathcal F (K)$ подмножеств
размера $d$ в $K$. (Иначе говоря, это образ $Gr(K,d)$ в  $Gr([n],d)$ при
естественном вложении. Манин и Шехтман называли это \emph{пакетом}.) У семьи
есть два выделенных линейных порядка, а именно, лексикографический $\preceq
_{lex}$ и противоположный к нему  антилексикографический $\preceq_{alex}$. Если
$K=\{i_1<i_2<...<i_{d+1}\}$, то лексикографический порядок $\preceq _{lex}$ на
$\mathcal M (k)$ имеет вид
                                                    $$
                          K-i_{d+1}<K-i_d<...<K-i_1,
                                                      $$
а антилексикографический -- противоположный. \medskip

{\bf Определение.} Частичный порядок $\preceq $ на $Gr([n],d)$ называется {\em
допустимым}, если его ограничение на любую семью $\mathcal F(K)$ (где $K$ --
произвольное множество размера $d+1$) есть       лексикографический или
антилексикографический порядок.\medskip

Разумеется, любое усиление допустимого порядка также допустимое. В частности,
любое линейное расширение. Манин и Шехтман работали именно с линейными
допустимыми порядками.

В этих терминах Предложение~\ref{pr:9} утверждает, что порядок на $Gr([n],d)$,
индуцированный некоторым кубильяжем, допустим. Оказывается, это условие не
только необходимое, но и в каком-то смысле достаточное.

\begin{theorem} \label{tm:3}  Пусть $\preceq $ -- допустимый порядок на  $Gr([n],d)$.
Тогда  существует (и притом единственный, см. Предложение~\ref{pr:12}) кубильяж
$\mathcal Q $, такой что $\preceq $ -- усиление $\preceq _\mathcal Q$.
  \end{theorem}

Чтобы сделать идею доказательства более понятной, представим,  что порядок
$\preceq $ уже происходит из кубильяжа $\mathcal Q $. Пусть $\mathcal P $ -
перегородка цвета $n$; произведем редукцию цвета $n$. В результате перегородка
сожмется на мембрану $\mathcal M $, а кубильяж $\mathcal Q $  редуцируется в
кубильяж      $\mathcal Q '=\mathcal Q _{-n}$ зонотопа $Z'=Z(n-1,d)$. $\mathcal
Q '$ как множество кубов естественно      вкладывается в $\mathcal Q $. И
ограничение естественного порядка кубильяжа $\mathcal Q $ на $\mathcal Q '$
сильнее естественного порядка на $\mathcal Q '$ (Предложение 7 из раздела 9).
Так что  ограничение $\preceq $      на $Gr([n-1],d)$ -- допустимый порядок, и
по индукции мы можем  восстановить $\mathcal Q '$. Остается восстановить еще и
мембрану $\mathcal M $. Но она задается множеством (типов) кубов, расположенных
в $\mathcal Q '$ до мембраны. Или, что то же самое, множеством кубов,
расположенных в $\mathcal Q $ до перегородки. Пусть $Q$ -- такой куб, и
$T\subset [n-1]$ -- его тип. Рассмотрим родителя $K=Tn$ и его семью как набор
кубов в $\mathcal Q$. Эта семья  `начинается' с $Q$ и затем идут кубы из
перегородки $\mathcal P $, так как их типы содержат цвет $n$. Из этого
следует, что $Q$ -- минимальный член этой семьи. Напротив, если куб $Q$ взят за
перегородкой, то $Q$ -- максимальный член семьи. Таким  образом, кубы (в
$\mathcal Q '$) до мембраны $\mathcal M $ -- это те кубы,  тип которых $T$
минимален в семье с родителем $Tn$. (Все это пояснение можно рассматривать как
третье доказательство Предложения 12.)

Теперь перейдем к доказательству, которое представляет собой обращение
пре\-дыдущих аргументов. Мы начинаем с допустимого порядка $\preceq $ на
$Gr([n],d)$. Ограничиваем его на $Gr([n-1],d)\subset Gr([n],d)$; очевидно, что
это снова допустимый порядок на $Gr([n-1],d)$. По  индукции он происходит из
некоторого кубильяжа $\mathcal Q '$ зонотопа $Z'=Z(n-1,d)$. Теперь образуем
подмножество $\mathcal T $ в $Gr([n-1],d)$, состоящее из таких $T$, что
ограничение $\preceq$ на семью  $\mathcal F(Tn)$ родителя $Tn$ --
лексикографический порядок.

Мы утверждаем, что $\mathcal T $ -- \emph{идеал относительно естественного
порядка $\preceq '$ на} $\mathcal Q '$. В самом деле, пусть $Q\in \mathcal T $
и $R\prec Q$, то  есть $R$ непосредственно предшестует кубу $Q$ в кубильяже
$\mathcal Q '$ (см. раздел 9). Нужно показать, что $R$ тоже принадлежит
$\mathcal T $. Предположим  противное, что ограничение $\preceq $ на семью
родителя $\tau (R)n$ --  антилексикографический порядок. В частности, $\tau
(R)$ -- максимальный  элемент в семье $\mathcal F (\tau (R)n)$. В свою очередь,
$\tau (Q)$ -- минимальный элемент в семье $\mathcal F (\tau (Q)n)$. Кубы $R$ и
$Q$ имеют общую фасету $F$, так что тип $F$, $\tau (F)$, содержится как в $\tau
(R)$, так и в $\tau (Q)$. Поэтому множество  $\tau (F)n$ лежит как в семье
$\mathcal F(\tau (R)n)$, так и в семье $\mathcal F (\tau (Q)n)$. Отсюда
следует, что $\tau (Q)\preceq \tau (F)n\preceq \tau (R)$ и по транзитивности
$\tau (Q)\preceq \tau (R)$. Но это  противоречит тому, что $R\prec Q$ и,
следовательно, $\tau (R)\prec \tau (Q)$.

Итак, $\mathcal T $ -- идеал в $\mathcal Q '$. Этот идеал определяет мембрану
$\mathcal M $ в $\mathcal Q '$; делая экспансию цвета $n$ в этой мембране, мы
получаем кубильяж $\mathcal Q $ (см. Предложение 4 раздела 6). Остается
убедиться, что исходное отношение $\preceq $ сильнее, чем отношение $\preceq
_\mathcal Q $. То есть если куб $R$ непосредственно предшествовал кубу $S$ в
кубильяже  $\mathcal Q $, $R\prec S$, то $\tau (R)\preceq \tau (S)$.

Это очевидно, если $R$ и $S$ лежат в $\mathcal Q '$. Это почти очевидно, если
один из кубов лежит в перегородке $\mathcal P $ (полученной экспансией
мембраны $\mathcal M $), а другой -- нет. Пусть, к примеру, $S$ лежит в
перегородке, а $R$ -- нет. Тогда куб $R$ расположен до перегородки. По
построению идеала $\mathcal T $ семья $\mathcal F (\tau (R) \cup \tau(S))$
`лексикографическая', и  $\tau (R)$ -- минимальный член этой семьи, откуда
$\tau (R)\preceq  \tau (S)$.

Осталось рассмотреть случай, когда $R$ и $S$ лежат в перегородке. Делая
редукцию всех цветов, не вошедших в $\tau (R)\cup \tau (S)$, мы можем считать,
что наш кубильяж $\mathcal Q $ -- один из двух кубильяжей капсида, где
утверждение очевидно в силу результатов раздела 10. Это завершает
доказательство теоремы 3.  $\Box$\medskip

Теорема 3 перебрасывает мостик между геометрическим подходом (кубильяжами) и
подходом Манина-Шехтмана, основанным на рассмотрении допустимых порядков на
грассманианах $Gr([n],d)$. Существование такой связи было анонсировано в работе
Воеводского и Капранова \cite{VK} и более обстоятельно изложено в \cite[Theorem
2.1]{T}.

Некоторое неудобство описания кубильяжей через допустимые  порядки состоит в
том, что разные порядки могут приводить к одному  кубильяжу. Мы уже отмечали,
что если $\preceq $ -- допустимый порядок, то  любое его усиление $\preceq '$
тоже допустимый порядок и задает тот же  кубильяж, что и $\preceq $. Чтобы
восстановить однозначность, можно  работать с минимальными допустимыми
порядками. Конечно, это в  точности порядки, порожденные (в смысле
транзитивного  замыкания) ограничениями на всевозможные семьи в $Gr([n],d)$.
Такие минимальные допустимые порядки задаются указанием для каждого родителя
(множества $K$ размера $d+1$), лексикографически или антилексикографически
упорядочена семья $\mathcal F (K)$ этого родителя. Иначе говоря, отображением
$\sigma :Gr([n],d+1) \to  \{+,-\}$, где + соответствует лексикографии. Конечно,
это назначение $\sigma $ не может быть произвольным; надо, чтобы в  результате
получалось ацикличное отношение (ср. с Предложением 6). Вопрос о том, какие
назначения дают ацикличность (и, следовательно, приводят к кубильяжам) мы
обсудим в следующем разделе.

%-------------------- SEC 15
           \section{Инверсии}\label{sec:invertions}

Раздел 13 позволяет рассматривать кубильяжи зонотопа $Z(n,d-1)$ как мембраны в
зонотопе $Z(n,d)$ на единицу большей размерности. Это  открывает еще один
способ задания кубильяжей.

Пусть $\mathcal Q'$ -- кубильяж зонотопа $Z'=Z(n,d-1)$. Реализуем его как
мембрану $\mathcal M =\mathcal M (\mathcal Q')$ в зонотопе  $Z=Z(n,d)$.
Согласно Теореме 2, существует (и как правило, не один) кубильяж $\mathcal Q $
зонотопа $Z$, в который вписана мембрана $\mathcal M $. Эта мембрана делит кубы
кубильяжа  $\mathcal Q $ на  расположенные до $\mathcal M $ и после $\mathcal M
$. Кубы, расположенные до мембраны  $\mathcal M $, соберем в множество
$\mathcal Q _-(\mathcal M )$. Это множество зависит, конечно, от кубильяжа
$\mathcal Q $, содержащего мембрану $\mathcal M $. Однако множество
\emph{типов} кубов из $\mathcal Q _-(\mathcal M)$  уже зависит только от
мембраны $\mathcal M$, но не от $\mathcal{Q}$. Это видно из следующей
леммы.\medskip

 \begin{lemma} \label{lm:3}
Пусть $Q$ -- куб кубильяжа $\mathcal Q$, расположенный до (соотв., после)
мембраны $\mathcal M$, и $K=\{k_1<k_2<...<k_d\}$ -- тип куба $Q$. Тогда
существует вершина $v$ мембраны $\mathcal M$, такая что $K\cap sp(v)=\{k_d,
k_{d-2},...\}$ (соотв., $K\cap sp(v)=\{k_{d-1}, k_{d-3},...\}$).
 \end{lemma}

\begin{proof}
Произведем редукцию всех цветов, отличных от цветов $K$. В результвте мы
получим зонотоп $Z'=Z(K,d)$ (состоящий из единственного куба $Q=Z'$) и мембрану
$\mathcal M'$ в $Z'$. Пусть $Q$ располагался до мембраны $\mathcal M$; тогда
куб $Q=Z'$ тоже располагается до мембраны      $\mathcal M'$ , то есть мембрана
$\mathcal M'$ -- это невидимая часть границы  $Q$. Пусть теперь $v'$ -- это
`голова' $h(Q)$ куба $Q$ (см. раздел 7) ; как мы знаем, ее спектр равен $\{k_d,
k_{d-2},...\}$. Пусть $v$ -- `прообраз' $v'$ в мембране $\mathcal M$, то есть
та вершина мембраны $\mathcal M$, которая редуцировалась в $v'$. Ясно, что
$sp(v)$ отличается от $sp(v')$  только цветами, не входящими в $K$, откуда
$sp(v)\cap K=\{k_d, k_{d-2},...\}$.

Аналогично рассуждаем в случае, когда  $K$ располагается после мембраны.
\end{proof}

\textbf{Замечание.} Менее техническое рассуждение такое. Кубильяжи области в
зонотопе $Z(n,d)$, расположенной перед мембраной  $\mathcal M$, могут быть
получены друг из друга с помощью флипов (см. замечание из раздела 13). А флипы
не меняют типы кубов, но лишь переставляют их.\medskip

Пусть $\mathcal Q$ -- кубильяж зонотопа $Z(n,d)$, $\mathcal M$ --
соответствующая мембрана в зонотопе $Z(n,d+1)$ (так что $\mathcal
Q=\pi(\mathcal M)$), а $\mathcal{Q}'$ -- некоторый кубильяж $Z(n,d+1)$,
содержащий мембрану $\mathcal M$ (такой существует по Теореме 2).\medskip

{\bf Определение.} {\em  Системой инверсий} $Inv(\mathcal Q)$  кубильяжа
$\mathcal Q$ называется система $\tau (\mathcal Q' _-(\mathcal M ))$ в
$Gr([n],d+1)$ (независящая от выбора $\mathcal{Q'}$ в силу Леммы 3). {Система
$Inv(\mathcal Q)$ обозначается также как} $Inv(\mathcal M)$. \medskip

То же рассуждение, как при доказательстве Леммы 3 (с редукцией всех цветов, не
входящих в тип куба $Q$), дает еще одно описание системы инверсий:

\begin{prop} \label{pr:14}
Родитель $T\in Gr([n],d+1)$ принадлежит $Inv(\mathcal Q )$ тогда и  только
тогда, когда семья этого фатера в $Gr([n],d)$ упорядочена отношением
$\preceq_\mathcal Q $ антилексикографически.
 \end{prop}

      Или\medskip

\noindent  $Inv(\mathcal Q )=\{T\in Gr([n],d+1), \text{ ограничение } \preceq
_\mathcal Q  \text{ на } Gr(T,d)  \text{ --  антилексикография}\}$.\medskip

Например, система $Inv$ для стандартного кубильяжа пустая. Напротив, $Inv$ для
антистандартного кубильяжа -- все $Gr([n],d)$. Когда мы делаем повышающий флип
в кубильяже $\mathcal Q $, мы добавляем к домембранным кубам еще один куб. Так
что при этом система $Inv(\mathcal Q )$ увеличивается на  один элемент.

Важное свойство системы инверсий $Inv(\mathcal Q )$ в том, что она позволяет
однозначно восстановить кубильяж $\mathcal Q $.

\begin{prop} \label{pr:15}
Отображение $Inv:\textbf{Q}(n,d)  \to 2^{Gr([n],d+1)}$ инъективно.
 \end{prop}

 \begin{proof}
Из Предложения~\ref{pr:14} следует, что порядок  $\preceq _\mathcal Q $
определяет $\mathcal Q $. Но порядок $\preceq _\mathcal Q $ порождается
порядками на семьях, а  система $Inv(\mathcal Q )$ как раз и показывает, какая
семья упорядочена  антилексикографически.
 \end{proof}

Как в предыдущем разделе, это поднимает вопрос о том, какие системы в
$Gr([n],d)$ являются `инверсными', то есть  происходят (с помощью $Inv$) из
кубильяжей $Z(n,d-1)$. Вопрос был решен Циглером в статье \cite{Z} (хотя
фундамент для решения был заложен в более ранней работе Лас Вернаса \cite{LV}).
Введем следующее\medskip

{\bf Определение.} Множество $\mathcal S $ в $Gr([n],d)$ называется {\em
консистентным}, если для любого  $T\subset [n]$ размера $d+1$ семья  $\mathcal
F (T)$ этого родителя пересекается с $\mathcal S $ по начальному или конечному
отрезку  $\mathcal F (T)$ (семья $\mathcal F (T)$ располагается в
лексикографическом порядке).

 \begin{lemma} \label{lm:4}
Для любой мембраны $\mathcal M $ в  $Z(n,d)$ система $Inv(\mathcal M )$
образует консистентное подмножество в $Gr([n],d)$.
 \end{lemma}

 \begin{proof}
Включим мембрану $\mathcal M $ в кубильяж $\mathcal Q  $ зонотопа $Z=Z(n,d)$, и
пусть $\preceq $      -- естественный порядок на $\mathcal Q $, а $\preceq
_\mathcal Q $ -- соответствующий  (допустимый) порядок на грассманиане
$Gr([n],d)$. Мы знаем из раздела 11, что $Inv(\mathcal M )$ -- это идеал
относительно $\preceq      _\mathcal Q $. С другой стороны, для любой семьи
$\mathcal F (T)$ порядок $\preceq _\mathcal Q $ индуцирует лексикографию или
антилексикографию. В первом случае пересечение $Inv(\mathcal M )$ с $\mathcal F
(T)$ будет      идеалом относительно лексикографического порядка, то есть
начальным отрезком. Во втором -- идеалом относительно противоположного порядка,
то есть конечным отрезком.
 \end{proof}

 \begin{theorem} \label{tm:4}
Обратно, пусть $\mathcal S $ --      консистентная система в      $Gr([n],d)$.
Тогда существует мембрана $\mathcal M $ в зонотопе $Z=Z(n,d)$, для которой
$\mathcal S =Inv(\mathcal M )$.
  \end{theorem}

           Мембрана $\mathcal M $ единственна, согласно Предложению 14.\medskip

\begin{proof}
Рассмотрим  подсистему $\mathcal S '$ в $\mathcal S $, состоящую из множеств
$S\in \mathcal S $, не содержащих цвет $n$. Система $\mathcal S '$ лежит в
$Gr([n-1],d)$ и тоже консистентна. Поэтому по индукции $\mathcal S '$
реализуется некоторой мембраной $\mathcal M '$ в $Z'=Z([n-1],d)$, так что
$\mathcal S '=Inv(\mathcal M ')$. Дополним эту мембрану до  кубильяжа $\mathcal
Q '$ зонотопа $Z'$ (Теорема 2). И затем сделаем экспансию по цвету $n$ мембраны
$\mathcal M '$ в $\mathcal Q '$. Мы получаем кубильяж $\mathcal Q $ зонотопа
$Z$, редукция которого $\mathcal Q _{-n}$  равна $\mathcal Q '$; в частности,
мембрана $\mathcal M '$ `утолщается' до перегородки $\mathcal P $ в $\mathcal Q
$ цвета $n$. Кубы кубильяжа $\mathcal Q $, типы которых  принадлежат $\mathcal
S -\mathcal S '$, лежат  в $\mathcal P$.

Мы утверждаем, что $\mathcal S $ \emph{является стэком (идеалом) в кубильяже}
$\mathcal Q $; в этом случае $\mathcal M $ -- это мембрана, соответствующая
стэку $\mathcal S $. Для этого  нужно проверить, что если $R\prec S$ -- два
куба из перегородки $\mathcal P $, и $S$ принадлежит $\mathcal S $, то и $R$
принадлежит $\mathcal S $. Но кубы $R$ и $S$ соседние, поэтому объединение их
типов, то есть $T=\tau (R)\cup \tau (S)$ имеет размер $d+1$. Рассмотрим семью
$\mathcal F $ этого родителя $T$, расположенную лексикографически. Минимальным
элеменом (`начальным членом') этой семьи служит множество $T-n$; пусть $Q$ --
куб в $\mathcal Q $ с  типом $T-n$. Этот куб не принадлежит перегородке
$\mathcal P $, поэтому он лежит либо до перегородки, либо после нее.

Рассмотрим случай, когда $Q$ лежит до перегородки. Тогда  в естественном
порядке $\preceq $ на $\mathcal Q $ это минимальный элемент, и семья $\mathcal
F $ упорядочена лексикографически. В силу консистентности $\mathcal S$ весь
интервал между $\tau (Q)=T-n$ и $\tau (S)$ принадлежит $\mathcal S$, и,  в
частности, множество $\tau (R)$. Так что $R$ принадлежит $\mathcal S $.

Аналогично рассматривается случай, когда $Q$ лежит после перегородки. В этом
случае $Q$ -- максимальный элемент (относительно ограничения $\preceq $ на
семью $\mathcal F $), и $\tau (Q)$ не принадлежит      $\mathcal S $. Значит
естественный порядок $\preceq $ на $\mathcal F $ антилексикографический, в нем
$R$ идет      после $S$, и снова из консистентности $R$ принадлежит $\mathcal
S$.
\end{proof}

Таким образом, мы имеем два эквивалентных (лучше сказать -- криптоморфных)
описания кубильяжей $Z(n,d)$: как  (минимальных) допустимых порядков на
$Gr(n,d)$ (Манин-Щехтман) и как  консистентных подмножеств в $Gr(n,d+1)$
(Циглер). Имея такие описания, можно переписывать на соответствующий язык
разные  понятия и конструкции с кубильяжами. Например, повышающий флип
соответствует увеличению консистентного множества на один элемент. Так что
отношение $\le $ для кубильяжей (см. раздел 8) согласовано с отношением
включения для консистентных множеств: если $\mathcal Q  \le \mathcal Q '$, то
$Inv(\mathcal Q )\subset Inv(\mathcal Q ')$. Обратное свойство верно, если
$d=1$ (классической свойство слабого порядка Брюа) или $d=2$ (\cite{FW, ER}),
но нарушается  для больших $d$ (Циглер в \cite{Z} привел контрпример при
$d=3$). В следующем разделе мы приведем еще одно описание (инициированное
Леклерком и Зелевинским \cite{LZ} и развитое Галашиным и Постниковым
\cite{GaP}), в  терминах систем $Sp(\mathcal Q )$ спектров вершин кубильяжа.

%-------------------- SEC 16
           \section{Отношение разделенности}\label{sec:separ_relat}

Напомним, что с каждой вершиной $v$ кубильяжа $\mathcal Q $ зонотопа $Z(n,d)$
мы  связывали подмножество $sp(v)$ в $[n]$ -- спектр вершины. Когда $v$
пробегает все вершины $\mathcal Q $, мы получаем систему $Sp(\mathcal  Q )$,
подмножество в $2^{[n]}$. В разделе 4 было показано, что система $Sp(\mathcal Q
)$ однозначно определяет кубильяж $\mathcal Q $. Поэтому естественно встает
вопрос -- какими свойствами обладают такие системы?

Первое свойство -- это ее размер. В разделе 2 было показано, что размер системы
$Sp(\mathcal Q )$ равен ${n \choose \le d}$. Этого, конечно, мало, чтобы
характеризовать такие системы. Второе важное структурное свойство состоит в
том, что любые два множества из  $Sp(\mathcal Q )$ являются
($d-1$)-разделенными.\medskip

{\bf Определение.} Два подмножества $X$ и $Y$ в $[n]$ называются
$r$-\emph{разделенными} ($r$ -- натуральное число), если не существует
возрастающей цепочки (размера $r+2$) $i_0<i_1<...<i_{r+1}$, такой что элементы
с четными номерами лежат в одном из множеств $X-Y$ или $Y-X$, а элементы с
нечетными -- в другом. Это симметричное и рефлексивное отношение обозначим как
$S_r$. Система подмножеств называется  $r$-\emph{разделенной}, если любые два
ее члена   $r$-разделены. \medskip

Иными словами, $X$ и $Y$ $r$-разделены, если множество  $[n]$ можно разбить на
$r+1$ интервал $I_0,...,I_r$ (некоторые могут  быть пустыми), так что $X-Y$
лежит в одной части этих интервалов, а  $Y-X$ -- в другой части.

Такие интервалы удобно задавать `разделителями'  (сепараторами), то есть $r$
точками $c_1\le c_2\le ...\le c_r$ на вещественной прямой, так что интервал
$I_i$ состоит из целых точке $x$, удовлетворяющих неравенствам  $c_i<x<c_{i+1}$
(считая $c_0=0$, $c_r=n+1$).

Отношения $S_r$ слабеют с ростом $r$, $S_0\subset  S_1\subset ...\subset S_n$.
Рассмотрим  несколько первых членов этой серии отношений.\medskip

{\bf Пример 0.} Отношение $XS_0Y$ означает, что  множества $X$ и $Y$  сравнимы
по включению: $X\subset Y$ или $Y\subset X$. Можно также сказать, что отношение
$S_{-1}$ это отношение равенства =. \medskip

{\bf Пример 1.} $S_1$ -- это отношение сильной разделенности, введенное
Леклерком и Зелевинским в \cite{LZ}. Оно означает, что после удаления общей
части либо $X$ располагается левее $Y$ ($X<Y$), либо $Y$ располагается левее
$X$.\medskip

{\bf Пример 2.} $S_2$ -- это отношение хорд-разделенности,  исследованное в
\cite{Ga}. \medskip

Отношения разделенности $S_r$ имеют непосредственное отношение к кубильяжам.
Это показывает

 \begin{prop} \label{pr:16}
Пусть $\mathcal Q $ -- кубильяж зонотопа $Z(n,d)$. Тогда для любых двух вершин
$v$ и $w$ кубильяжа $\mathcal Q $ множества $sp(v)$ и $sp(w)$
$(d-1)$-разделены.
 \end{prop}

Иными словами, система $Sp(\mathcal Q )$ $(d-1)$-разделенная. Например, для
любого ромбического тайлинга $\mathcal T $ система $Sp(\mathcal T)$ 1-разделена
-- факт, замеченный в \cite{LZ}. Или другой, крайний случай. Рассмотрим, как в
разделе 7, куб $Z(d,d)$. У него есть две `выступаюшие' точки --  хвост $t$ и
голова $h$. Спектр головы -- $\{d,d-2,...\}$, спектр хвоста -- $\{d-1,
d-3,...\}$, В совокупности эти элементы, чередуясь, покрывают      все $[n]$.
Поэтому множества $sp(h)$ и $sp(t)$ являются $(d-1)$-разделенными  (но не
$(d-2)$-разделенными). Если же мы `сплющим' (спроектируем) этот куб в капсид
$Z(d,d-1)$, точка $t$ превратится во внутреннюю      (центральную) точку
стандартного кубильяжа зонотопа $Z(d,d-1)$ и, как легко проверить, будет
$(d-2)$-разделена со всеми другими  вершинами этого кубильяжа. Но она  не
$(d-2)$-разделена с проекцией точки $h$, центральной точкой другого,
антистандартного  кубильяжа. \medskip

Доказательство предложения 15. Предположим, что $X=sp(v)$ и $Y=sp(w)$ не
($d-1$)-разделены. Тогда существуют элементы-цвета $i_0<i_1<...<i_d$, такие,
что $i_0,i_2,...$ принадлежат, скажем, $X-Y$, а $i_1,i_3,...$ принадлежат
$Y-X$. Делая редукцию по всем  цветам, отличным от $i_0,...,i_d$, мы получаем
кубильяж капсида      $Z(d+1,d)$. И в нем две точки $v'$ и $w'$ (образы $v$ и
$w$ при редукции) со спектрами $i_0,i_2,...$ и $i_1,i_3,...$. Но, как мы видели
выше, эти точки  лежат в разных кубильяжах, одна -- в стандартном ($v'$ при
нечетном $d$ и $w'$ при четном), а другая -- в антистандартном.     \hfill
$\Box$\medskip

В следующем Предложении мы опишем спектр стандартного (и  антистандартного)
кубильяжей зонотопа $Z(n,d)$.

 \begin{prop} \label{pr:16}
Пусть $v$ --  вершина стандартного кубильяжа зонотопа $Z(n,d)$. Тогда
существует разложение $[n]=I_0\sqcup I_1\sqcup  ... \sqcup I_d$  на $d+1$
последовательный интервал, так что $sp(v)=I_{d-1}\sqcup I_{d-3}\sqcup ... $.
Обратно, любое множество такого вида есть спектр некоторой вершины стандартного
тайлинга.

Для антистандартного кубильяжа спектры вершин имеют вид $I_d\sqcup
I_{d-2}\sqcup ...$.
  \end{prop}

\begin{proof}
Стандартный кубильяж зонотопа $Z(n,d)$ получается из  стандартного же кубильяжа
$Z(n-1,d)$ путем экспансии (добавления)  цвета $n$ вдоль задней (невидимой)
мембраны (см. Пример из раздела  6). Поэтому спектр стандартного кубильяжа
состоит из двух частей  -- первая состоит из множеств вида $I_{d-1}\cup
I_{d-3}\cup ...$, где  интервалы $I_d,I_{d-1},...,I_0$ берутся из $[n-1]$, а
вторая  получается из спектров вершин `задней' мембраны (невидимой части
границы зонотопа $Z(n-1,d)$) добавлением цвета $n$. Но `задняя'  мембрана --
это фактически антистандартный кубильяж зонотопа       $Z(n-1,d-1)$, и спектры
ее вершин имеют вид $J_{d-1}\cup      J_{d-3}\cup ...$, где  интервалы
$J_{d-1},...,J_0$ снова берутся из $[n-1]$. В первом случае  нужно `подправить'
интервал $I_d$, заменив его на интервал  $I_d\cup \{n\}$  (заметим, что $I_d$
либо содержит цвет $n\!-\!1$, либо пуст). Во втором  случае мы добавляем новый
интервал $J_d=\{n\}$.
    \end{proof}

К примеру, для $d=1$ спектр стандартного кубильяжа $Z(n,1)$  состоят из
$(n+1)$-го интервала $[k]$, $k=0,1,...,n$. Соотвественно,  спектры
антистандартного тайлинга состоят из дополнительных      интервалов: $[n],
[n-1..n],..., [n-k..n],...,\emptyset$.

Для $d=2$ спектр стандартного ромбического тайлига состоит из  произвольных
интервалов в $[n]$, тогда как спектр антистандарного  тайлинга -- из
коинтервалов (дополнений к интервалам).

Для $d=3$ спектр стандартного кубильяжа состоят из т.н. `полуторных'
интервалов, то есть множеств вида $[1,i]\cup [j,k]$, где      $0\le i\le j\le
k-1\le n$.

{Как следствие Предложения 16 мы получаем описание спектров \emph{периферийных}
вершин, то есть вершин зонотопа. Это в точности вершины, которые принадлежат
одновременно стандартному и антистандартному кубильяжам. Тут удобно ввести
понятие {\em интервального ранга} подмножества $X \subset [n]$ как минимального
числа $r$, такого что $X$ представляется как объединение интервалов  $I_1\sqcup
... \sqcup I_r$.}\medskip

{\bf Следствие 1.} {\em Множество $X$ является спектром периферийной вершины
зонотопа $Z(n,d)$ тогда и только тогда, когда сумма интервальных рангов $X$ и
дополнительного множества $[n]-X$ не больше $d$.} \hfill $\Box$ \medskip

{В частности, если размер некоторого множества $X$ не больше, чем
$\frac{d-1}{2}$ (то есть $2|X|\le d-1$), то соответствующая точка $v(X)$
является периферийной, то есть вершиной зонотопа $Z(n.d)$. Иными словами,
внутренние (не лежащие на границе зонотопа) целые точки появляются только на
высоте $h\ge d/2$ и выше. Этот факт тесно связан с явлением, обычно упоминаемым
при обсуждении циклических политопов (см., например, \cite[Пример 0.6]{Zbook}).
А именно, что любое множество вершин циклического политопа размера не больше
$\frac{d-1}{2}$ принадлежит некоторой грани этого политопа.}\medskip

{\bf Следствие 2.} {\em  Спектр периферийной вершины $(d-1)$-разделен с любым
подмножеством в $[n]$.} \hfill $\Box$ \medskip

Это утверждение коррелирует с Предложением 3, которое  утверждает, в частности,
что любая `целая' точка вписывается в  некотрый кубильяж.

%-------------------- SEC 17
           \section{Разделенность и кубильяжи}\label{sec:separ_and_cubil}

Мы уже получили, что система $Sp(\mathcal Q )$ для кубильяжа  $\mathcal Q $
зонотопа $Z(n,d)$ является $(d-1)$-разделенной и имеет размер ${n \choose \le
d}$. Теперь пойдем в обратном направлении и покажем, что  система с такими
свойствами получается как спектр некоторого кубильяжа. Для начала покажем, что
число ${n \choose \le d}$ -- это верхняя граница для размера
$(d-1)$-разделенной системы в $[n]$.

 \begin{prop} \label{pr:18}
Пусть $\mathcal S $ -- $(d-1)$-разделенная система подмножеств в $[n]$. Тогда
ее размер не превышает ${n \choose \le d}$.
 \end{prop}

 \begin{proof}
Разобьем $\mathcal S $ на две части:  $\mathcal S _0$, члены которой  не
содержат цвет $n$, и $\mathcal S _1$, члены  которой содержат $n$. Обозначим
через $\mathcal S _2$ систему множеств вида $X-n$, где $X$ пробегает $\mathcal
S _1$. Очевидно, что  размер $\mathcal S _2$ равен размеру $\mathcal S _1$.
Легко понять, что      $\mathcal S _0\cup \mathcal S _2$ --
($d-1$)-разделенная система в $[n-1]$, поэтому (по индукции) ее размер  не
превосходит ${n-1 \choose \le d}$.

Рассмотрим теперь пересечение $\mathcal T =\mathcal S_0\cap \mathcal S_2$. Оно
состоит из множеств $X\subset [n-1]$, которые принадлежат  $\mathcal S $,
причем $Xn$ тоже  принадлежит $\mathcal S $. Мы утверждаем, что {\em система
$\mathcal T $ ($d-2$)-разделенная}. В  самом деле, предположим, что в $\mathcal
T $ есть множества  $X$ и $Y$, которые не $(d-2)$-разделенные. Значит есть
последовательность  $i_0<i_1<...<i_{d-1}$, такая, что члены с четными индексами
принадлежат (скажем) $X-Y$, а с нечетными -- $Y-X$. Но тогда
последовательность $i_0<i_1<...<i_{d-1}<i_d=n$ обладает тем же  свойством либо
для $\widetilde{X}=Xn$ и $Y$, либо для $X$ и $\widetilde{Y}=Yn$. Это
противоречит предположению о ($d-1$)-разделенности $\mathcal S $.

Снова по индуктивному предположению размер $\mathcal T $ не превосходит  ${n-1
\choose \le d-1}$. Отсюда размер $\mathcal S $, равный сумме размеров
$\mathcal S _0$ и $\mathcal S _2$, то есть сумме размеров $\mathcal S_0\cup
\mathcal S _2$ и $\mathcal S _0\cap \mathcal S _2$, не  превосходит ${n-1
\choose \le d}+{n-1 \choose \le d-1}={n \choose \le d}$.
\end{proof}

Ввиду Предложения 17 ($d-1$)-разделенные системы в $[n]$ размера ${n \choose
\le d}$  называают {\em максимальными по размеру}, противопоставляя это {\em
максимальности по включению}. Следующий результат установлен Галашиным и
Постниковым \cite{GaP} в несколько большей общности (они не  ограничивались
кубильяжами циклических зонотопов).

\begin{theorem} \label{tm:5}
Пусть $\mathcal S $ -- $(d-1)$-разделенная система в $[n]$ размера      ${n
\choose \le d}$. Тогда она имеет вид $Sp(\mathcal Q )$ для некоторого
(единственного) кубильяжа зонотопа $Z(n,d)$.
  \end{theorem}

 \begin{proof}
Оно близко к доказательству Предложения 17. Снова разделим $\mathcal S $ на
$\mathcal S _0$ и  $\mathcal S _1$  и   обозначим $\mathcal S _2$ как выше.
Тогда из доказательства Предложения 17  следует, что размер $\mathcal S _0\cup
\mathcal S _2$ равен ${n-1 \choose \le  d}$, а размер пересечения $\mathcal S
_0\cap \mathcal S _2$ равен      ${n-1 \choose \le  d-1}$. По индукции мы можем
считать, что система $\mathcal S_0\cup \mathcal S _2$ реализуется некоторым
кубильяжем $\mathcal Q' $ в $Z(n-1,d)$, а пересечение $\mathcal S _0\cap
\mathcal S _2$ -- кубильяжем $\mathcal Q ''$ зонотопа      $Z(n-1,d-1)$.
Очевидно, что $\mathcal Q ''$       реализуется мембраной $\mathcal M $ в
кубильяже $\mathcal Q' $. Остается взять в качестве $\mathcal Q$      экспансию
по цвету $n$ кубильяжа $\mathcal Q' $ вдоль этой мембраны. На уровне
спектров это сведется к замене всех множеств $X$ из $\mathcal S_2$ на множества
$Xn$. Так что $Sp(\mathcal Q)=\mathcal S _0\cup \mathcal S_1=\mathcal S $.
 \end{proof}

Таким образом, мы получаем еще одно криптоморфное воплощение  кубильяжей. И
снова можно в этих новых терминах пересказывать  разные конструкции.

Приведем еще один интересный пример разделенных систем.

 \begin{prop} \label{pr:19}
Пусть $\mathcal{{M}}_1$   и $\mathcal{{M}}_2$  -- две мембраны в зонотопе
$Z(n,d)$, причем  $Inv(\mathcal{M}_1)\subset~ Inv(\mathcal{M}_2)$. Тогда
система $Sp(\mathcal{M}_1)\cup Sp(\mathcal{M}_2)$ $(d-1)$-разделена.
 \end{prop}

 \begin{proof}
Пусть это не так, и некоторые множества $X\in Sp(\mathcal{M}_1)$  и  $Y\in
Sp(\mathcal{M}_2)$ не $(d-1)$-разделены. Тогда сушествует последовательность
элементов $i_1<i_2<...<i_{d+1}$  из $[n]$, которые поочередно лежат то в $X-Y$,
то в $Y-X$. Рассмотрим случай, когда в $X-Y$ лежат элементы $i_{d+1}, i_{d-1},
...$ (а элементы $i_{d}, i_{d-2}, ...$ лежат в $Y-X$), и образуем
$d$-элементное  множество $K=\{i_1,...,i_d\}$. По Лемме~\ref{lm:3}, примененной
к $X$, мы получаем, что $K$ является инверсным для мембраны $\mathcal{{M}}_1$,
а значит и для мембраны $\mathcal{{M}}_2$. Но по той же Лемме~\ref{lm:3} мы
получаем, что множество $X$ неинверсное для мембраны $\mathcal{{M}}_2$.

Аналогично разбирается случай, когда элементы $i_{d+1}, i_{d-1}, ...$ лежат в
$Y-X$. Однако теперь мы образуем $d$-элементное множество
$K'=\{i_2,...,i_{d+1}\}$. По Лемме~\ref{lm:3} $K'$ неинверсное для мембраны
$\mathcal{{M}}_2$, а значит и для $\mathcal{{M}}_1$. Но по той же лемме $K'$
инверсное для $\mathcal{{M}}_1$.
 \end{proof}

%-------------------- SEC 18
           \section{Чистота и расширяемость}\label{sec:purity}

Предложение 17  поднимает следующий вопрос -- когда  данная
($d-1$)-разделен\-ная система $\mathcal S $ подмножеств в $[n]$ расширяется до
системы максимального размера, то есть до ($d-1$)-разделенной  системы размера
${n \choose \le d}$? Или, в терминах раздела 4, когда соответствующее множество
целых точек зонотопа $Z(n,d)$ вписывается в некоторый кубильяж этого зонотопа.
Если это так, система $\mathcal S $ называется {\em расширяемой} (или, точнее,
$(n,d)$-\emph{расширяемой}). Если любая ($d-1$)-разделенная система расширяема,
то говорят, что для пары $(n,d)$ выполнено свойство \emph{чистоты}.

Например, мы видели в разделе 4, что любая одноэлементная система (то есть
любое подмножество в $[n]$) расширяема. Согласно Теореме 2 из раздела 13 любая
мембрана вписывается в кубильяж. Покажем, что  любая двухэлементная система
расширяема.

 \begin{prop} \label{pr:20}
Пусть $X$ и $Y$ -- два ($d-1$)-разделенных подмножества в $[n]$. Тогда двухэлементная система $\{X,Y\}$ расширяема.
 \end{prop}

            \begin{proof}
Будем доказывать индукцией по $n$. Предположим, что  цвет $i$ не входит в $X$ и
$Y$. Тогда их можно рассматривать как  подмножества в $[n]-i$. По индукции
множества $X$ и $Y$ вписываются в      кубильяж $\mathcal Q $ зонотопа
$Z([n]-i,d)$. Остается сделать экспансию по  цвету $i$ (см. раздел 4)
относительно невидимой (в направлении      $v_i$) части границы этого зонотопа.

Аналогично рассуждаем в том случае, когда цвет $i$ входит как в  $X$, так и в
$Y$. Только теперь надо делать экспансию по цвету $i$  относительно видимой по
этому цвету части границы зонотопа.

Таким образом, все сводится к случаю, когда множества  $X$ и $Y$ дополнительны
друг к другу в $[n]$. Пользуясь интервальным  представлением, мы видим, что $X$
и $Y$ образованы из чередующихся  интервалов в некотором разложении $I_0\sqcup
I_1\sqcup  ... \sqcup  I_r$ цепи $[n]$ на  последовательно идущие непустые
интервалы. В силу ($d-1$)-разделенности $X$ и $Y$ мы имеем $r<d$. Но это
означает, что $X$ и $Y$  (точнее, реализующие их точки $v(X)$ и $v(Y)$)
принадлежат `периферии'  зонотопа $Z(n,d)$ и потому вписываются в любой
кубильяж.
         \end{proof}

Как мы увидим в следующем разделе, это утверждение уже неверно для трех
множеств. Это означает, что свойство чистоты  выполнено не для любых $(n,d)$. А
для каких же?

Тривиально чистота имеет место для $d=1$. Чистота для $d=2$ была  установлена
Леклерком и Зелевинским в семинальной работе \cite{LZ}  (см. также наш обзор
\cite{UMN}). Чистота для $d=3$ установлена в работе      Галашина \cite{Ga}.
Кроме того, по тривиальным причинам чистота  имеет место для $n=d+1$ (капсиды).
Этими случаями и исчерпывается чистота, как показано в \cite{GaP}. В следующем
разделе мы более подробно рассмотрим случай $n=d+2$ при $d\ge  4$ и приведем
простые и  явные примеры нерасширяемых систем. Для распространения таких
результатов на случай $n>d+2$ полезны будут следующие два факта.

\begin{prop} \label{pr:20}
Пусть $\mathcal S \subset 2^{[n]}$. Система $\mathcal S$ $(n,d)$-расширяема
тогда и только тогда, когда она (рассмотренная как система подмножеств в
$[n+1]$) $(n+1,d)$-расширяема.
 \end{prop}

 \begin{proof}
Пусть $\mathcal S $ $(n,d)$-расширяема. Тогда она ($d-1$)-разделенная и
реализуется как подмножество вершин некоторого кубильяжа $\mathcal Q $ зонотопа
$Z=Z(n,d)$. Пусть кубильяж $\mathcal Q '$ зонотопа $Z(n+1,d)$ получен из
$\mathcal Q $ экспансией по цвету $n+1$ относительно невидимой (задней)
мембраны $Z$. Так как $\mathcal Q \subset \mathcal Q '$, то $\mathcal S $
вписывается в $\mathcal Q '$.

Обратно, пусть $\mathcal S $ вписывается в некоторый кубильяж $\mathcal Q '$
зонотопа $Z(n+1,d)$. И пусть $\mathcal P =\mathcal P _{n+1}$ -- перегородка
цвета  $n+1$ в $\mathcal Q '$. Так как  множества из $\mathcal S $ не содержат
цвет $n+1$, эта система вершин лежит (нестрого) до перегородки $\mathcal P $.
Поэтому при редукции  $\mathcal P $ мы получим кубильяж $\mathcal Q $ зонотопа
$Z$, в который $\mathcal S $ вписывается.
\end{proof}

 \begin{prop} \label{pr:21}
Пусть $\mathcal S $ -- $(d-1)$-разделенная система подмножеств в $[n]$, и
$n'=n+1$. Образуем систему $\mathcal S '\subset 2^{[n+1]}$, $\mathcal S
'=\{Xn', X\in \mathcal S \}$. Система $\mathcal S $ $(n,d)$-расширяема тогда и
только тогда, когда система $\mathcal S \cup \mathcal S '$
$(n+1,d+1)$-расширяема.
 \end{prop}

            \begin{proof}
Пусть $\mathcal S $ $(n,d)$-расширяемая, то есть вписывается в  некоторый
кубильяж $\mathcal Q $ зонотопа $Z(n,d)$. Реализуем $\mathcal Q $ как мембрану
$\mathcal M $ в некотором кубильяже $\mathcal Q '$ зонотопа $Z(n,d+1)$, так что
$\mathcal S $ образует часть вершин $\mathcal M $. Пусть, наконец, $\mathcal Q
''$ -- экспансия $\mathcal Q '$ по цвету $n'$ относительно этой мембраны
$\mathcal M $; это некоторый кубильяж  зонотопа    $Z(n+1,d+1)$. При этом
мембрана $\mathcal M $ расширяется до перегородки $\mathcal P $ цвета $n'$ в
$\mathcal Q ''$. Система $\mathcal S $ лежит на одной (видимой) стороне этой
перегородки, тогда  как $\mathcal S '$ лежит на другой (невидимой) стороне
перегородки $\mathcal P $. Откуда видно, что $\mathcal S \cup \mathcal S '$
вписывается в $\mathcal Q ''$.

Обратно, пусть система $\mathcal S \cup \mathcal S '$ (как множество  целых
точек в  зонотопе $Z''=Z(n+1,d+1)$) вписывается в некоторый кубильяж $\mathcal
Q ''$ зонотопа  $Z''$. Рассмотрим перегородку $\mathcal P $ цвета $n'$ в
$\mathcal Q ''$. Ясно, что  $\mathcal S $ располагается на одной стороне этой
перегородки, тогда  как $\mathcal S '$ -- на другой (и получается из $\mathcal
S $ сдвигом на вектор $v_{n'}$). Редукция  этой перегородки приводит к
кубильяжу $\mathcal Q '$ зонотопа $Z'=Z(n,d+1)$; при  этом перегородка
$\mathcal P $ сжимается до мембраны $\mathcal M $, а системы $\mathcal S$ и
$\mathcal S '$ сливаются в одну систему $\mathcal S $ точек на этой мембране.
Остается спроектировать эту мембрану и получить кубильяж $\mathcal Q $ зонотопа
$Z(n,d)$,      содержаций систему $\mathcal S $ как подмножество вершин.
\end{proof}

{\bf Следствие.} {\em  Пусть $\mathcal S $ -- нерасширяемая  система в зонотопе
$Z(d+r,d)$, $r\le r'$ и $d\le d'$. Тогда  существует  нерасширяемая система в
зонотопе $Z(d'+r',d')$.}  \hfill $\Box$\medskip

Мы говорили выше, что любая мембрана вписывается в кубильяж. Покажем, что не
любая пара мембран (даже $(d-1)$-разделенных) вписываются в кубильяж.
Воспользуемся примером, построенным Циглером \cite{Z}. А именно, он построил
две мембраны $\mathcal M_1$ и $\mathcal M_2$ в зонотопе $Z(8,4)$ (он задавал их
в форме консистентных систем), которые обладали двумя свойствами: 1)
$Inv(\mathcal M_1)\subset Inv(\mathcal M_2)$, и 2) эти две мембраны не
вписываются в один кубильяж зонотопа  $Z(8,4)$ (справедливости ради надо
сказать, что Циглер формулировал свойство 2) немного в других терминах). Так
вот, в силу Предложения 19 объединение спектров $\mathcal M_1$ и $\mathcal M_2$
является 3-разделенной системой в $Gr([8],4)$, но не продолжающейся до
максимальной по размеру. Это также дает пример отсутствия чистоты.

%-------------------- SEC 19
           \section{Случай $Z(6,4)$}\label{sec:Z64}

Здесь мы более подробно исследуем вопрос о расширяемых/нерасширяемых системах
точек в $Z(d+2,d)$. Мы уже касались в разделе 11 вопроса о кубильяжах таких
зонотопов и выяснили, что их бывает $2n$ штук, где $n=d+2$. Мы подробно
рассмотрим $Z(6,4)$ (в случае $Z(5,3)$ все 2-разделенные подсистемы
расширяемы). На этом простом примере видны почти все эффекты общего  случая
$Z(d+2,d)$.

При малом $n-d$ почти все целые точки лежат на границе зонотопа $Z(n,d)$.
Например, при $n=d+1$ только две точки не на границе. В случае $n=d+2$
непериферийных точек $2n$. В самом деле, по формуле (1.2) число  периферийных
точек равно $2({d+1 \choose d-1}+...+{d+1 \choose 0})=
2(2^{d+1}-1-(d+1))=2^n-2n$. И так как все периферийные точки ($d-1$)-разделены
со всеми точками, то нам надо разобраться с разделенностью этих $2n$
непериферийных точек-множеств. Здесь мы и начнем считать, что $n=6$, а $d=4$.

В этом случае имеется $52=64-12$ периферийных  точек-множеств. Выпишем явно 12
непериферийных точек-множеств. Как было показано в Следствии 1 Предложения 16,
периферийные точки задаются разбиением отрезка $[n]$ на $d$ последовательно
идущих интервала; после этого надо взять объединение либо `четных' интервалов,
либо `нечетных'. В случае, когда $d=4$, а $n=6$, непериферийные точки-множества
-- это те, которые не укладываются (альтернированно) в четыре интервала, а
должны задействовать 5 или 6 (непустых) интервалов. Понятно, что такие
интервалы либо одноэлементные (и тогда их шесть), либо почти все
одноэлементные, и только один интервал двухэлементный (и тогда интервалов
пять). Выпишем эти 12 множеств как строки сдедующей таблицы.\medskip

$$\begin{array}{cccccc}
  1 &  & 3 &  & 5 &  \\
  1 &  & 3 &  & 5 & 6 \\
  1 &  & 3 &  &  & 6 \\
  1 &  & 3 & 4 &  & 6 \\
  1 &  &  & 4&  & 6 \\
  1 & 2 &  & 4 &  & 6 \\
   & 2 &  & 4 &  & 6 \\
   & 2 &  & 4 &  &  \\
   & 2 &  & 4 & 5 &  \\
   & 2 &  &  & 5 &  \\
   & 2 & 3 &  & 5 &  \\
   &  & 3 &  & 5 &
\end{array}$$

Эти 12 множеств удобно расположить по кругу, как на цифербате  часов, помещая
135 на месте шести часов, а дополнительное  множество 246 на месте двенадцати
часов. См. рис. 16.

\begin{figure}[htb]
\unitlength=.65mm
\special{em:linewidth 0.4pt}
\linethickness{0.4pt}
\begin{picture}(125.00,120.00)(-40,0)
\put(71.00,5.00){\circle{10.00}}
\put(71.00,5.00){\makebox(0,0)[cc]{\small 135}}

\put(95.00,10.00){\circle{10.00}}
\put(95.00,10.00){\makebox(0,0)[cc]{\small 35}}

\put(115.00,30.00){\circle{10.00}} \put(115.00,30.00){\circle{11.50}}
\put(115.00,30.00){\makebox(0,0)[cc]{\small 235}}

\put(120.00,55.00){\circle{10.00}}
\put(120.00,55.00){\makebox(0,0)[cc]{\small 25}}

\put(115.00,80.00){\circle{10.00}}
\put(115.00,80.00){\makebox(0,0)[cc]{\small 245}}

\put(95.00,100.00){\circle{10.00}}
\put(95.00,100.00){\makebox(0,0)[cc]{\small 24}}

\put(70.00,105.00){\circle{10.00}} \put(70.00,105.00){\circle{11.50}}
\put(70.00,105.00){\makebox(0,0)[cc]{\small 246}}

\put(45.00,100.00){\circle{10.00}}
\put(45.00,100.00){\makebox(0,0)[cc]{\small 1246}}

\put(25.00,80.00){\circle{10.00}}
\put(25.00,80.00){\makebox(0,0)[cc]{\small 146}}

\put(20.00,55.00){\circle{10.00}}
\put(20.00,55.00){\makebox(0,0)[cc]{\small 1346}}

\put(25.00,30.00){\circle{10.00}} \put(25.00,30.00){\circle{11.50}}
\put(25.00,30.00){\makebox(0,0)[cc]{\small 136}}

\put(45.00,10.00){\circle{10.00}}
\put(45.00,10.00){\makebox(0,0)[cc]{\small 1356}}
\end{picture}
 \caption{циферблат}
 \label{fig:15}
  \end{figure}

Какие же множества среди этих 12-ти 3-разделенные? Легко  понять, что
неразделенными является множество и его дополнение, а также два соседа этого
дополнения. Так что неразделенными являются  множество и три `противоположных'
к нему. Это можно увидеть также  из того, что подряд идущие 5 множеств на
циферблате вписываются в один кубильяж (и дают все 12 кубильяжей зонотопа
$Z(6,4)$). Например, `цифры' от часа до пяти (а именно,  множества  24, 245,
25, 235 и 35) вписываются в стандартный  кубильяж. Из этого описания
3-неразделенности видно, что три  множества (размещенные как XII часов, IV часа
и VIII часов, то есть  множества 246, 235 и 136) образуют (вместе с периферией)
максимальную по включению 3-разделенную систему из 52+3=55 множеств,  тогда как
максимум по размеру равен 52+5=57. Это показывает, что в  $Z(6,4)$ имеется
система размера 55, которая не (6,4)-расширяемая (и максимальна по включению).

Аналогичные рассуждения показывают, что в $Z(d+2,d)$ (при  $d\ge  4$) имеется
$(d+2,d)$-нерасширяемая система.\footnote{Отсутствие чистоты при $d=7$
фактически утверждается в \cite[Proposition 8.1]{OT}.} Любопытно  отметить, что
в  случае $d=8$ на цифеблате из 20 `часов' можно указать 5 `цифр'  (0,  4, 8,
12 и 16), которые дают максимальную по включению 7-разделенную  подсистему в
этой 20-элементной непериферийной системе. Вместе с  периферией это дает
1009-элементную максимальную по включению 7-разделенную систему подмножеств
множества [10]. В то время как  максимум по размеру равен 1004+9=1013. Разница
равна 1013-1009=4=$d/2$,  что дает основания предположить, что {\em размер
максимальной по   включению ($d-1$)-разделенной системы отличается от максимума
по  размеру не более чем на $d/2$.}

Вместе со Следствием из предыдущего раздела мы получаем теорему
Галашина-Постникова \cite{GaP}, что      \emph{если $d\ge  4$ и $n\ge  d+2$, то
существует $(n,d)$-нерасширяемая система  множеств в $[n]$. }В частности, для
таких $(n,d)$ нет чистоты.

% SEK 20
\section{Слабая разделенность}\label{sec:weak}

В разделе 16 и последующих вводилось и изучалось отношение $k$-разделенности в
множестве $2^{[n]}$. Обобщая понятие слабой разделенности Леклерка и
Зелевинского \cite{LZ}, можно предложить понятие слабой $k$-разделенности в
случае нечетного $k$. (Для четного $k$ мы не видим хорошего определения.)
Удобнее, однако, задавать это отношение как усиление отношения
$(k+1)$-разделенности.

Итак, пусть  $k$ -- нечетное натуральное число. Напомним, что подмножества $A$
и $B$ в $[n]$  назывались     $(k+1)$-разделенными: если существует такое
разбиение $[n]$ на $k+2$ последовательных интервала $[n]=I_0\sqcup I_1\sqcup
... \sqcup I_{k+1}$, что множество  $A - B$ располагается в объединении
`четных' интервалов $I_0\sqcup I_2\sqcup ... I_{k+1}$, а $B-A$ -- в объединении
`нечетных' интервалов  $I_1\sqcup ... I_k$  (или наоборот).\medskip

\textbf{Определение.}  Множества  $A$   и $B$   как выше называются
\emph{сильно $(k+1)$-разделен\-ными} (или \emph{слабо $k$-разделенными}), если
размер $A$ не больше, чем размер  $B$. Образно говоря, `окружающее' множество
по размеру не превосходит `окружаемое'.\medskip

Например, множество 15 слабо 1-разделено с множествами 234 и 24, но не с
множеством 3. Отношение  слабой 1-разделенности -- это в точности отношение
слабой разделенности Леклерка-Зелевинского. Если множества $A$   и $B$
$k$-разделены, то они слабо $k$-разделены, что оправдывает употребление эпитета
`слабо'. Если $A$  и $B$  имеют одинаковый размер, слабая   $k$-разделенность
превращается в $(k+1)$-разделенность.

В некоторых отношениях слабая $k$-разделенность `похожа' на $k$-разделенность.
Например: в Предложении 17 утверждается, что размер любой $k$-разделенной
системы подмножеств $[n]$ не превышает ${n \choose \le k+1}$. Покажем, что это
же верно для слабой $k$-разделенности.\medskip

\textbf{Предложение 22}. \emph{Пусть $k$ -- нечетное число. Размер слабо
$k$-разделенной системы подмножеств в $[n]$ не превышает    ${n \choose \le
k+1}$.}\medskip

\begin{proof}
Оно проводится индукцией по $k$; при $k=-1$ утверждение очевидно, так как
сильная 0-разделенность -- это просто равенство. Поэтому размер такой системы
равен 1, как и ${n \choose 0}$. При $k\ge 1$ рассуждение начинается как в
предложении 17. Пусть $\mathcal{W}$  -- слабо $k$-разделенная система в $[n]$;
разделим ее на две части: $\mathcal{W}_0$, члены которой не содержат цвет  $n$,
и $\mathcal{W}_1$, члены которых содержат цвет   $n$. Пусть $\mathcal{W}_2$
состоит из множеств  вида  $X-n$, где  $X$  пробегает $\mathcal{W}_1$. Ясно,
что $\mathcal{W}_0\cup \mathcal{W}_2$ снова слабо $k$-разделенная система (уже
в $[n-1]$), и по индукции ее размер не превосходит ${n-1 \choose \le k+1}$.

Обозначим через  $\mathcal D $  систему множеств $X$, которые сами лежат в
$\mathcal W_0 $, а их `двойник' $Xn$      лежит в $\mathcal W_1$. Мы
утверждаем, что \emph{размер $\mathcal D $ не превосходит  ${n-1 \choose \le
k}$.} Как и в предложении 17, Предложение 22 следует из этой оценки. Следующие
три леммы доказывают эту оценку. \medskip

{\bf Лемма 5.} {\em  Пусть множества  $A$ и $B$  из $\mathcal D $ имеют  один и
тот же размер. Тогда они $(k-1)$-разделены. }\medskip

Доказательство. $(k-1)$-неразделенность означает, что ($A-B$ и $B-A$) нельзя
альтернированно разместить в разбиении $[n-1]$ на $k$ интервалов. То есть что
для их размещения нужно задействовать $k+1$ интервал. Предположим, что такое
размещение имеет схему $ABAB...AB$ (слово из  $k+1$ буквы). В этом случае
сравним $B$  и  $An$. И хотя эти множества помещаются в $k+2$ интервала,
размер окружающего множества $An$ превышает размер окружаемого $B$, что
противоречит тому, что $An$  и $B$ слабо  $k$ -разделены.  Аналогично в случае
схемы $BA...BA$, где мы будем сравнивать множества $A$ и $Bn$.  $\Box$ \medskip

Обозначим через $\mathcal D _i$ подсистему тех множеств из $\mathcal D$,
которые имеют размер $i$. И дополним ее двумя другими  системами подмножеств в
$[n-1]$. Для их определения рассмотрим две вспомогательные  системы $\mathcal S
$ и $\mathcal A $. Чтобы их задать, мы рассмотрим разбиение $[n-1]$ на $k$
интервалов, $[n-1]=I_0\sqcup ...\sqcup I_{k-1}$. Тогда объединение `нечетных'
интервалов $(I_1\sqcup ...\sqcup I_{k-2})$ этого разбиения дает элемент системы
$\mathcal S$, а объединение `четных' интервалов дает элемент системы  $\mathcal
A $. Через $\mathcal S _i$ мы обозначаем подсистему в $\mathcal S $,
образованную множествами размера $i$, и аналогично понимаются $\mathcal A
_i$.\medskip

{\bf Лемма 6.} {\em  Объединение систем $\mathcal S _{n-1}\sqcup ...\sqcup
\mathcal S _{i+1}\sqcup \mathcal D _i\sqcup \mathcal A_{i-1}\sqcup ...\sqcup
\mathcal A_0$  образует  слабо $(k-2)$-разделенную систему подмножеств
множества $[n-1]$. } \medskip

Доказательство получается их следующих трех простых замечаний.  Первое:
множество из $\mathcal S _l$ слабо $(k-2)$-разделено с любым  множеством
меньшего или равного размера. В самом деле, оно имеет вид $I_1\sqcup...\sqcup
I_{k-2}$ для некоторого разбиения $[n-1]=I_0\sqcup ...\sqcup I_{k-1}$ на $k$
интервалов. Условие на размер дает сильную $(k-1)$-разделенность, то есть
слабую $(k-2)$-разделенность. Второе: по тем же причинам множество из $\mathcal
A _l$ слабо $(k-2)$-разделено с любым  множеством большего или равного размера.
Третье -- множества из $\mathcal D _i$ слабо $(k-2)$-разделены по Лемме 5 (они
$(k-1)$-разделены и имеют одинаковый размер). $\Box$\medskip

По предположению индукции размер системы $\mathcal S _{n-1}\sqcup ...\sqcup
\mathcal S _{i+1}\sqcup      \mathcal D _i\sqcup \mathcal A _{i-1}\sqcup
...\sqcup \mathcal A _0$ не превосходит ${n-1 \choose \le k-1}$.  С другой
стороны размер системы $\mathcal S $ (как спектра стандартного кубильяжа
зонотопа  $Z(n-1,k-1)$) равен ${n-1 \choose \le k-1}$. Откуда мы заключаем, что
размер $\mathcal D _i$ не превосходит разности размеров $\mathcal S _0\sqcup
\mathcal S _1\sqcup ...\sqcup \mathcal S _i$ и $\mathcal A _0\sqcup ...\sqcup
\mathcal A _{i-1}$. И осталось разобраться с  размером этой разности. \medskip

\textbf{Лемма 7.} \emph{Эта разность равна числу вершин стандартного (а на
самом деле -- любого) кубильяжа зонотопа $Z(n-1, k)$ на высоте $i$.}\medskip

Имея это равенство, мы быстро завершаем доказательство Предложения 22. В самом
деле, размер $\mathcal D _i$ не превосходит числа вершин стандартного кубильяжа
на уровне $i$. Поэтому размер всей системы $\mathcal D$ не превосходит числа
вершин стандартного кубильяжа $Z(n-1, k)$, то есть ${n-1 \choose \le k}$. Но
именно нужно было доказать.\medskip

Доказательство леммы 7. Рассмотрим зонотоп $Z=Z(n-1,k)$. Элементы системы
$\mathcal S $ реализуются как спектры вершины видимой части  границы $Z$,
элементы $\mathcal A $ -- как спектры вершин невидимой части границы. Обрежем
$Z$ на высоте $i$. Тогда элементы $\mathcal S _0\sqcup \mathcal S _1\sqcup
...\sqcup \mathcal S _i$ реалзуются как вершины видимой части этого обрубка
(`чаши'), тогда как элементы $\mathcal A _0\sqcup ...\sqcup  \mathcal A _{i-1}$
-- как вершины  невидимой части (исключая те, которые находятся на высоте $i$).

А теперь возьмем любой кубильяж $Z$, напрмер, стандартный. И рассмотрим
гирлянды (см. раздел 7) в этом кубильяже, точнее, те, которые начинаются в
точках видимой части границы `обрубка'. Каждая гирлянда где-то пересекает
другую  границу `обрубка', а это либо невидимоя часть обрубка, либо его верхнее
`горизонтальное' основание (потолок). И при этом получается любая вершина
кубильяжа этой невидимой части границы обрубка. Отсюда мы получаем, что
интересующая нас разность в точности равна числу вершин нашего  кубильяжа,
находящихся на высоте $i$.  Лемма 7 доказана, а вместе с ней доказано и
Предложение 22.
\end{proof}

Наконец, можно поставить вопрос о чистоте отношения слабой разделенности. Для
$k=1$ утвердительный ответ был получен в \cite{UMN}. Однако уже при $k=3$ ответ
отрицателен. Для получаеия контрпримера обратимся снова к разделу 19, то есть к
системе подмножеств в $[6]$. В этом случае есть 52 периферийных точек-множеств,
которые 3-разделимы с любыми подмножествами в $[6]$. Мы утверждаем, что

а) Три множества 25, 1356 и 1246 слабо 3-разделены. Они даже 3-разделены, см. раздел 19.

б) Вместе с периферийными точками-множествами  эти три множества образуют
нерасширяемую дальше (максимальную по включению) слабо 3-разделенную систему
размера 52+3=55. Тогда как ${6 \choose \le 4}=57$. Утверждение про
нерасширяемость проверяется непосредственно. Например, множество 25 4-разделено
с множествами 136, 1346 и 146, но разделено не сильно из соображений размера.
\medskip

Отметим, с другой стороны, что нерасширяемая 3-разделенная система $\{246, 235,
\\ 136\}$, рассмотренная как слабо 3-разделенная, допускает расширение за счет
добавления двух множеств 146 и 245.

\addcontentsline{toc}{section}{Дополнение 1. Поликатегорный взгляд на кубильяжи}

   \section*{Дополнение 1. Поликатегорный взгляд на кубильяжи}

Попробуем дать некоторое представление об этой достаточно изощренной
конструкции, придуманной Маниным, Шехтманом, Капрановым и Воеводским
(\cite{MSch-3, VK, KV}).

В основе лежит идея, что кубильяжи можно трактовать как  морфизмы, но между
чем? Между кубильяжами меньшей размерности. Но  те, в свою очередь, можно
рассматривать как морфизмы, и так мы  приходим не просто к категории, но к
поликатегории. Грубо  говоря, поликатегория -- это система категорий, в которой
для любых  объектов $a$ и $b$ одной категории множество $Hom(a,b)$ в свою
очередь является  множеством объектов другой категории, более высокого уровня.

Чтобы настроиться на этот поликатегорный взгляд, вернемся к ромбическим
тайлингам. Пусть есть две мембраны в  тайлинге, то есть две змейки, идущие в
зоногоне $Z(n,2)$ из  нижней  вершины  $\emptyset$ в верхнюю  вершину $[n]$.
Предполагая, что  вторая мембрана $\mathcal M _2$ проходит правее первой
$\mathcal M_1$, мы можем сказать,  что $\mathcal M _1\le \mathcal M _2$, если
между этими мембранами-змейками  можно вписать (частичный)  ромбический тайлинг
(или, что эквивалентно, обе мембраны вписаны в  некоторый ромбический тайлинг).
Интерпретируя мембраны-змейки как линейные порядки (или перестановки) на
множестве $[n]$, мы получаем (слабый) порядок Брюа.

Однако можно поступить более тонко и сказать, что нас интересует не просто
существование тайлингов, `соединяющих'  $\mathcal M _1$ и $\mathcal M _2$, но и
сами `соединяющие' их тайлинги. Иначе говоря, морфизмом  из $\mathcal M _1$ в
$\mathcal M _2$ объявляется произвольный (частичный) тайлинг $\mathcal T $
между $\mathcal M_1$ и $\mathcal M _2$. Так мы получаем вместо посета Брюа
категорию Брюа, в которой объектами служат перестановки (или линейные порядки
на $[n]$), морфизмами --  соединяющие их тайлинги, а композиция морфизмов
производится просто путем объединения тайлингов. На самом деле, мы  получаем
большее: ведь два тайлинга $\mathcal T _1$ и $\mathcal T_2$ между $\mathcal M
_1$ и $\mathcal M _2$ тоже можно  сравнивать с помощью повышающих флипов. Так
что $Hom(\mathcal M_1,\mathcal M _2)$ не просто (довольно богатое) множество,
но тоже посет! И композиция  морфизмов-тайлингов уважает эти структуры посетов.

Но если уж мы встали на этот путь, то грех останавливаться. Можно не просто
говорить, что тайлинги $\mathcal T _1$ и $\mathcal T _2$ соединяются
повышающими флипами, но само такое соединение назвать морфизмом  (уже
следующего, второго уровня). А соединение -- это некоторый  (частичный)
кубильяж уже в зонотопе $Z(n,3)$. И эту конструкцию  можно продолжать и далее,
поднимаясь все выше и выше по  размерности, пока мы не дойдем до зонотопа-куба
$Z(n,n)$, в котором имеется уже только один тривиальный кубильяж.

           Так выглядит эта поликатегорная картина в общих чертах.

В следующем примере мы приведем лишь наиболее интересный фрагмент
поликатегорного полотна, да и то в предельно простом случае. А  именно, возьмем
$n=3$, и изобразим лишь первый, второй и третий этажи  получающейся `пагоды'.

Объектами первого этажа служат вершины куба $Z(3,3)$, то есть подмножества
базисного множества $[3]=\{1,2,3\}$. Но что считать морфизмами? Казалось бы,
естественно полагать, что из множества $X$  идет стрелка в $Y$, если $X\subset
Y$. На этом пути мы приходим просто к  посету $2^{[3]}$. Кубильяжная точка
зрения говорит, что морфизмом надо считать `путь' от $X$ в $Y$, частичную
змейку из  $X$ в $Y$. Композиция змеек очевидна: если есть змейка из $X$ в $Y$
и из $Y$ в $Z$, надо просто приставить их друг к другу и получить змейку из $X$
в $Z$.

 \begin{figure}[htb]
\unitlength=.8mm
\special{em:linewidth 0.4pt}
\linethickness{0.4pt}
\begin{picture}(136.00,48.00)(-15,0)
\bezier{72}(30.00,5.00)(22.00,10.00)(15.00,15.00)
\bezier{60}(15.00,15.00)(15.00,22.00)(15.00,30.00)
\bezier{72}(15.00,30.00)(22.00,35.00)(30.00,40.00)
\bezier{22}(30.00,40.00)(36.00,36.00)(45.00,30.00)
\bezier{20}(45.00,30.00)(45.00,23.00)(45.00,15.00)
\bezier{72}(45.00,15.00)(38.00,11.00)(30.00,5.00)
\put(30,5){\vector(-3,2){15}}
\put(15,15){\vector(0,1){15}}
\put(15,30){\vector(3,2){15}}
\put(30,25){\vector(0,1){15}}
\put(30,5){\vector(3,2){15}}
\put(45,15){\vector(-3,2){15}}
\bezier{32}(15.00,15.00)(22.00,20.00)(30.00,25.00)
\put(10,20){$a$}
\put(38,20){$b$}

\bezier{72}(80.00,5.00)(72.00,10.00)(65.00,15.00)
\bezier{20}(65.00,15.00)(65.00,22.00)(65.00,30.00)
\bezier{22}(65.00,30.00)(72.00,35.00)(80.00,40.00)
\bezier{72}(80.00,40.00)(86.00,36.00)(95.00,30.00)
\bezier{20}(95.00,30.00)(95.00,23.00)(95.00,15.00)
\bezier{22}(95.00,15.00)(88.00,11.00)(80.00,5.00)
\put(80,5){\vector(-3,2){15}}
\put(65,15){\vector(3,2){15}}
\put(80,20){\vector(3,2){15}}
\put(80,5){\vector(0,1){15}} \put(80,25){\vector(0,1){15}}
\put(95,30){\vector(-3,2){15}}
\put(70,21){$a$}
\put(85,19){$b$}

\bezier{72}(130.00,5.00)(122.00,10.00)(115.00,15.00)
\bezier{60}(115.00,15.00)(115.00,22.00)(115.00,30.00)
\bezier{72}(115.00,30.00)(122.00,35.00)(130.00,40.00)
\bezier{72}(130.00,40.00)(136.00,36.00)(145.00,30.00)
\bezier{60}(145.00,30.00)(145.00,23.00)(145.00,15.00)
\bezier{72}(145.00,15.00)(138.00,11.00)(130.00,5.00)
\put(130,5){\vector(-3,2){15}}
\put(115,15){\vector(0,1){15}}
\put(115,30){\vector(3,2){15}}
\put(145,15){\vector(0,1){15}}
\put(130,5){\vector(3,2){15}}
\put(145,30){\vector(-3,2){15}}

\put(130,5){\line(0,1){15}}
\put(130,20){\line(-3,2){15}}
\put(130,20){\line(3,2){15}}
\bezier{30}(130.00,25.00)(130.00,32.00)(130.00,40.00)
\bezier{32}(115.00,15.00)(122.00,20.00)(130.00,25.00)
\bezier{32}(130.00,25.00)(136.00,20.00)(145.00,15.00)

\put(110,20){$a$}
\put(148,20){$b$}
\end{picture}
  \caption{Змейки в зоногоне $Z(3,2)$}
 \label{fig:17}
  \end{figure}

Объектами второго этажа являются {частичные змейки. Если есть две змейки $a$ и
$b$ с одним и тем же началом и одним и тем же концом, возникает вопрос} --
можно ли замостить  промежуток между ними ромбами? И морфизмами второго этажа
(между $a$ и  $b$)  являются  именно такие ромбические замощения области между
$a$ и $b$ (здесь надо считать, что змейка $b$ проходит правее змейки $a$). В
нашем случае с $n=3$ такое замощение бывает либо одно  (как слева на рис. 17),
или ни одного (как на среднем рисунке), где змейки идут из      $\emptyset$ в
$[3]$).

Однако (в нашем простом случае) есть единственное исключение,  когда между
змейкой $123$ и $321$ (здесь мы понимаем сокращение $123$ как линейный порядок
на множестве $\{1,2,3\}$) возможны два замощения  или тайлинга:  стандартный и
антистандартный. Так что если нарисовать часть  второго этажа, содержащую
только `полные' змейки (из $\emptyset $  в $[3]$,  то есть шесть линейных
порядков на множестве $[3]$), мы получим следущую картину (рис. 18).

 \begin{figure}[htb]
\unitlength=1mm
\special{em:linewidth 0.4pt}
\linethickness{0.4pt}
\begin{picture}(97.00,43.00)
\put(40.00,20.00){\makebox(0,0)[cc]{123}}
\put(42.00,18.00){\vector(1,-1){13.00}}
\put(57.00,3.00){\makebox(0,0)[cc]{132}}
\put(59.00,3.00){\vector(1,0){18.00}}
\put(80.00,3.00){\makebox(0,0)[cc]{312}}
\put(82.00,5.00){\vector(1,1){13.00}}
\put(97.00,20.00){\makebox(0,0)[cc]{321}}

\put(43.00,23.00){\vector(3,1){35.00}}
\put(43.00,17.00){\vector(3,-1){36.00}}

\put(42.00,22.00){\vector(1,1){13.00}}
\put(57.00,37.00){\makebox(0,0)[cc]{213}}
\put(59.00,37.00){\vector(1,0){18.00}}
\put(80.00,37.00){\makebox(0,0)[cc]{231}}
\put(82.00,35.00){\vector(1,-1){13.00}}
\bezier{216}(42.00,22.00)(67.00,32.00)(94.00,22.00)
\bezier{216}(42.00,18.00)(65.00,8.00)(94.00,18.00)

\put(58.00,6.00){\vector(3,1){36.00}}
\put(58.00,34.00){\vector(3,-1){35.00}}

\put(68.00,20.00){\makebox(0,0)[cc]{\Large $\Downarrow$}}
\end{picture}
  \caption{Частичная картина поликатегории в случае $n=3$.}
 \label{fig:17}
  \end{figure}

Мы хотели бы обратить внимание, что из змейки 123 в змейку 321 идут два
морфизма-тайлинга: стандартный и антистандартный. И эти два морфизма, как
объекты третьего этажа, тоже связаны морфизмом (третьего уровня), который
изображен двойной стрелкой $\Rightarrow$. (Конечно, на третьем этаже имеется
еще много `банальных'
      (изолированных) объектов.)

Этим примером мы хотели бы завершить рассказ о  поликатегорном взгляде на
кубильяжи. Подробности фомализма можно  посмотреть в статье Манина-Шехтмана
\cite{MSch-3} или Капранова-Воеводского \cite{VK, KV}. Само понятие
поликатегории кратко обсуждается в книге \cite{Mac} и более подробно -- в книге
\cite{Lu-1}. Отметим еще статью \cite{KV-3}, где на на поликатегорном языке
обсуждается уравнение Замолодчикова.

\addcontentsline{toc}{section}{Дополнение 2. Связь с триангуляциями и посетом Тамари-Сташева  }

    \section*{Дополнение 2. Связь с триангуляциями и посетом Тамари-Сташева  }

Практически одновременно с введением кубильяжей в форме высшего посета Брюа
появились (в работе Воеводского и Капранова \cite{KV}) посеты Тамари-Сташева,
которые во многом параллельны посетам Брюа и тесно с ними связаны. На эту тему
написано  много работ, с которыми можно познакомиться по обзору \cite{RR}.

Чтобы определить эти посеты, снова рассмотрим циклическую  конфигурацию
векторов $C(n,d)$. Концы этих векторов являются точками  на гиперплоскости в
$\mathbb R ^d$, заданной уравнением $x_1=1$, то есть      гиперплоскостью на
`высоте' 1. Выпуклая оболочка этих точек  $v_1,...,v_n$ называется {\em
циклическим политопом} и обозначается  $P(n,d)$; это пересечение зонотопа
$Z(n,d)$ с гиперплоскостью, и его размерность равна $d-1$. При  $d>2$ точки
$v_i$ являются вершинами этого политопа.

Вместо кубильяжей зонотопа $Z(n,d)$ теперь интересуются  триангуляциями
политопа $P(n,d)$, то есть разбиениями на $(d-1)$-мерные  симплексы, вершины
которых лежат в множестве $\{v_1,...,v_n\}$. При  $d=2$ это разбиение отрезка
$[v_1,v_n]$ на подотрезки, при $d=3$ -- разбиение выпуклого $n$-угольника на
$(n-2)$ треугольника, и так  далее. Множество триангуляций обозначим ${\bf
TS}(n,d)$.

Триангуляции во многом похожи на кубильяжи. Например, среди  них есть
`стандартная' и `антистандартная' триангуляции (или  нижняя и верхняя
триангуляции в терминологии  \cite{RR}), которые  получаются как ограничения
соответствующих кубильяжей. В  частности, этими триангуляциями исчерпываются
множества ${\bf TS}(d+1,d)$. Замена стандартной триангуляции на антистандартную
в ${\bf  TS}(d+1,d)$ называется повышающим флипом. Аналогичную вещь (повышающий
флип)  можно делать при любом $n$, если удается найти фрагмент вида стандартной
триангуляции ${\bf TS}(d+1,d)$. Мы говорим, что $\mathcal T \le \mathcal T '$,
если от $\mathcal T $ можно  добраться до $\mathcal T '$ серией повышающих
флипов. Это дает структуру  посета в множестве ${\bf TS}(n,d)$, называемую {\em
посетом  Тамари-Сташева}.\medskip

\textbf{Пример 1.} $d=2$. Задать тут триангуляцию -- значит задать  разбиение
на отрезки отрезка $[t_1,t_n]$, причем так, чтобы концы  отрезков были какие-то
$t_i$. Любое подмножество в $\{t_2,...,t_{n-1}\}$  задает такое разбиение на
отрезки. Типичный повышающий флип  состоит в замене двух последовательных
отрезков $[t_i,t_j]$ и    $[t_j,t_k]$ ($t_i<t_j<t_k$) на отрезок $[t_i,t_k]$,
то есть фактически  удаление $t_j$ из подмножества. Мы видим, что посет
Тамари-Сташева в  этом случае (анти)изоморфен булевой решетке подмножеств
множества  $\{2,...,n-1\}$. \medskip

\textbf{Пример 2.} Случай $d=3$ более интересен. Надо взять $n$ точек на
параболе $y=x^2$, упорядоченных по возрастанию $x$; выпуклая оболочка этих
точек дает циклический многоугольник $P=P(n,3)$. Возьмем некоторую  его
триангуляцию и войдем (сверху) в треугольник со стороной  $[v_1,v_n]$. После
чего выйдем через любую из оставшихся сторон. Если при этом мы покидаем $P$, на
этом путь заканчивается. Если попали в другой треугольник триангуляции, процесс
продолжается -- мы снова выходим через одну из двух других сторон  этого
треугольника. И так далее. В результате получается то, что называется {\em
плоским бинарным деревом} (с $(n-2)$-мя некорневыми вершинами, соответствующими
треугольникам триангуляции), см. рис. 20 и 21. В книге \cite{Sta-2} (упр. 6.19)
приведены еще 65 способов задания множества таких деревьев, среди которых стоит
выдеить способ `правильной' расстановки скобок в строке из $(n-1)$-й буквы.
Этот способ использовал Тамари при определении его посета ${\bf TS}(n,3)$.

Чтобы говорить не просто о множестве ${\bf TS}(n,3)$, но и о частичном  порядке
на нем, надо конкретизировавать повышающие флипы. Они  устроены так: надо взять
фрагмент триангуляции, изображенный слева на рис. 19, и заменить диагональ
($ik$) четырехугольника на другую диагональ ($jl$).

\begin{figure}[h]
\begin{center}
\includegraphics[scale=0.3]{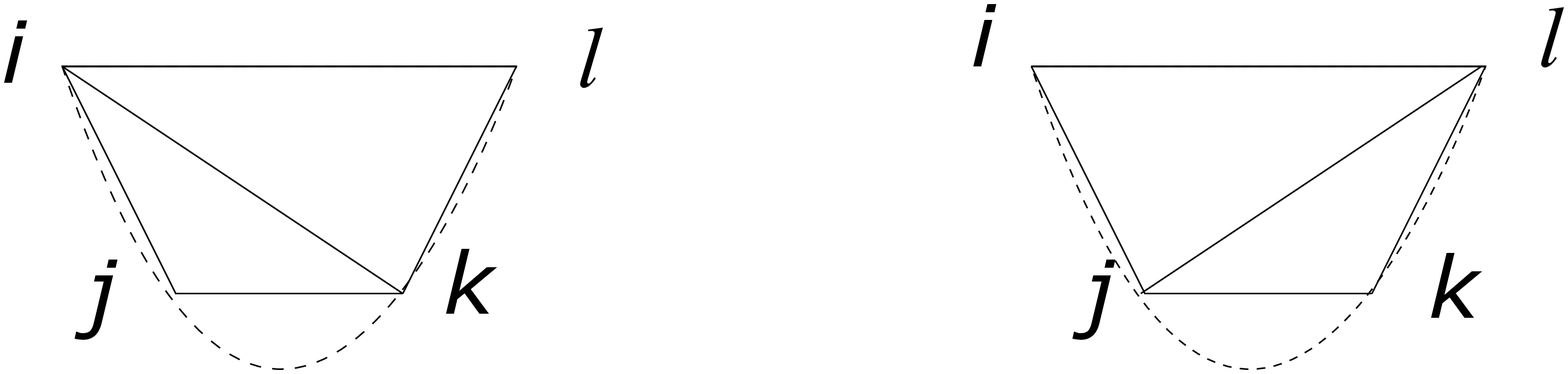}
\end{center}
 \caption{флип триангуляции}
 \label{fig:19}
  \end{figure}

Ниже нарисованы пять трангуляций пятиугольника и соответствующие бинарные
деревья.

\begin{figure}[h]
\begin{center}
\includegraphics[scale=0.25]{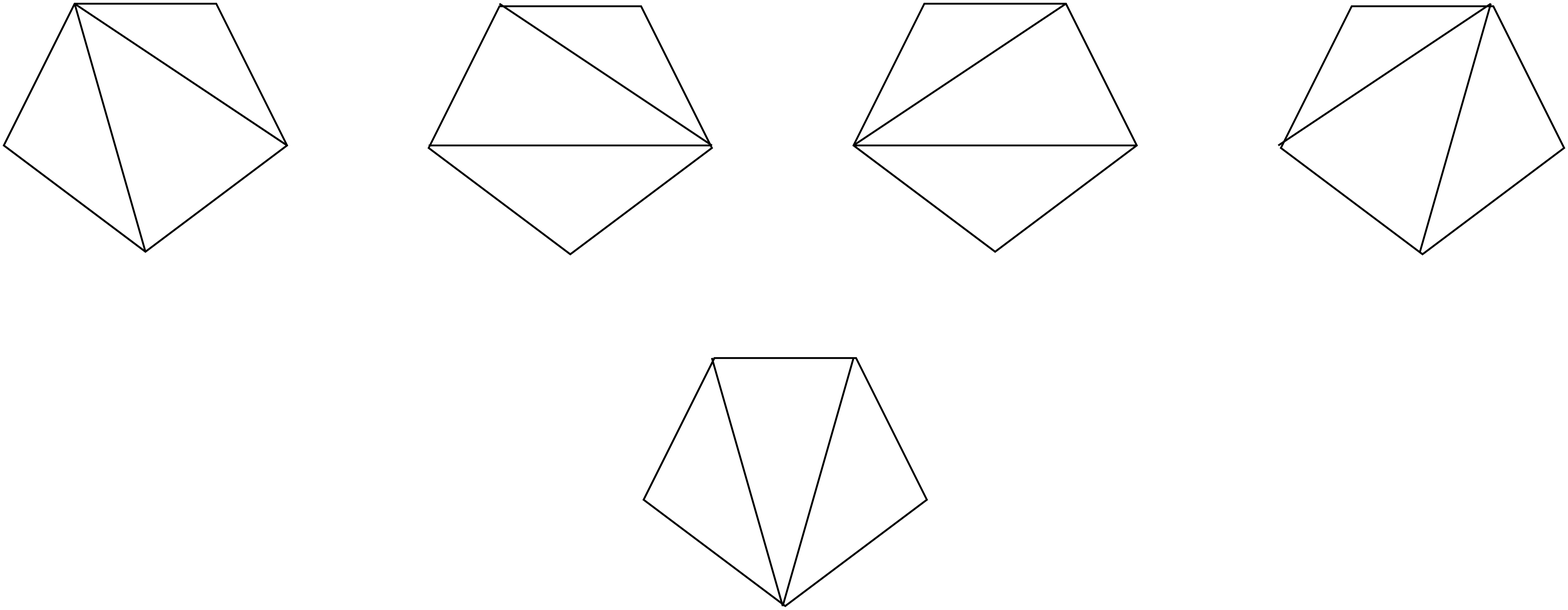}
\end{center}
 \caption{\small пять триангуляций пятиугольника $P(5,2)$.}
 \label{fig:20}
  \end{figure}

\begin{figure}[hb]
\begin{center}
\includegraphics[scale=0.2]{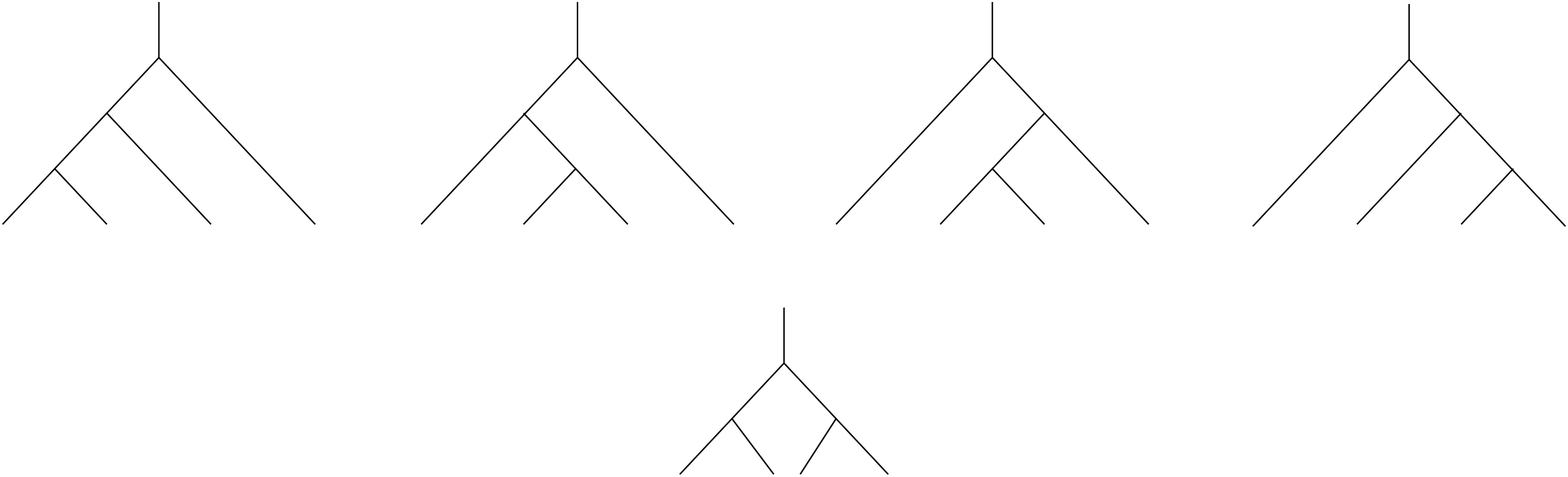}
\end{center}
 \caption{\small соответствующие пять бинарных дерева.}
 \label{fig:21}
  \end{figure}

           Помимо очевидной аналогии с кубильяжами имеется более точная
      связь между ними. Пусть $\mathcal Q $ -- кубильяж зонотопа
      $Z(n,d)$. Если мы
      пересечем этот зонотоп гиперплоскостью $x_1=1$, мы получим в
      точности политоп $P(n,d)$; сечения кубов той же гиперплоскостью дают
      триангуляцию политопа $P(n,d)$. Это дает естественное отображение
                                    $$
                           sec:\textbf{Q}(n,d) \to {\bf TS}(n,d).
                                      $$
Триангуляция $\mathcal T =sec(\mathcal Q )$ дает полное представление о кубах,
лежащих в `основании' кубильяжа $\mathcal Q $ (то есть укорененных в 0), но
мало что говорит о том, как лежат кубы выше. Поэтому нет оснований надеяться на
инъективность отображения $sec$, и уже простейшие примеры показывают
неинъективность. В то же время  есть веские основания надеяться, что верна
следующпя\medskip

\textbf{Гипотеза 1.} {\em Отображение $sec:\textbf{Q}(n,d) \to {\bf TS}(n,d)$
      сюръективно.}\medskip

Во всяком случае это так для $d\le 3$. В случае $d=2$ это простое  упражнение.
Случай $d=3$ можно получить из работы  \cite[sec. 7]{DKK-18} или из результатов
\cite{Ga}.

Отметим, что в обзоре \cite{RR} (в пункте 8.3) также говорится про связь
посетов Тамари и Брюа, но там обсуждается другое  отображение. Что же касается
отображения $sec$, то оно обсуждается в  работе \cite{DMH}. Там  заявляется о
сюръективности этого отображения, правда посет Тамари понимается  несколько
иначе, более `комбинаторно'. А вся  их работа посвящена применению
Тамари-посетов к солетонным решениям уравнения Кадомцева-Петвиашвили. Имеется
еще несколько работ о связи уравнения КП и кубильяжей-триангуляций, из которых
отметим самую последнюю \cite{Ko}

Ометим еще, что отображение $sec:{\bf Q}(n,d) \to {\bf  TS}(n,d)$  согласовано
с флипами. Если мы делаем флип в кубильяже  $\mathcal Q $,  то он может
индуцировать флип в триангуляции $sec(\mathcal Q )$. Точнее, если мы  делаем
флип внутри некоторого капсида, укорененного в 0, мы получаем флип
соответствующей триангуляции. Если же капсид  расположен `высоко', то флип
кубильяжа не оказывает никакого воздействия на триангуляцию. В любом случае это
дает, что  отображение $sec:{\bf Q}(n,d) \to {\bf TS}(n,d)$ согласовано со
структурами посетов.

\addcontentsline{toc}{section}{Дополнение 3. Слабые мембраны }

      \section*{Дополнение 3. Слабые мембраны    }

Это дополнение непосредственно продолжает материал разделов 9-11. С другой
стороны оно примыкает к тематике      Дополнения 2 про триангуляции циклических
политопов.

В разделе 6 вводилось понятие мембраны в кубильяже. Это $(d-1)$-мерный
подкомплекс кубильяжа, который при  проекции $\pi$ вдоль  $d$-го координатного
вектора $e_d$ биективно проектируется туда же, куда весь зонотоп $Z(n,d)$.
Аналогично понимается и слабая мембрана. Это тоже некоторая ($d-1$)-мерная
пленка в зонотопе $Z(n,d)$, биективно проектирующаяся туда же, куда и весь
зонотоп. Но есть два отличия. Первое -- эта пленка уже не является
подкомплексом кубильяжа $\mathcal Q $, но подкомплексом  некоторого измельчения
этого кубильяжа. Второе -- проектируем мы не  вдоль вектора (направления)
$e_d$, но вдоль вектора $e_d+\varepsilon e_1$, где $\varepsilon $ -- небольшое
положительное число. То есть мы глядим на зонотоп почти в  направлении $e_d$,
но чуть-чуть `снизу'. Иначе говоря, проекция $\pi_\varepsilon$ вдоль вектора
$e_d+\varepsilon e_1$ переводит точку $x=(x_1,x_2,...,x_d)\in \mathbb R^d$ в
$\pi_\varepsilon(x)=(x_1-\varepsilon x_d, x_2,...,x_{d-1})\in \mathbb R^{d-1}$.

Скажем более точно. Циклический зонотоп $Z=Z(n,d)$ как бы растет из точки 0
`вверх', до высоты $n$, и все его вершины, как и вершины кубильяжа $\mathcal Q
$, находятся на целочисленной высоте. Рассечем и зонотоп $Z$, и все кубы
кубильяжа $\mathcal Q $ `горизонтальными' гиперплоскостями  $H_k$, заданными
уравнениями $x_1=k$, где $k$ пробегает целые число от 1 до $n-1$. В результате
каждый куб $Q$ будет разрезан на $d$ частей , называемых {\em кусками}. Каждый
кусок -- это некоторый гиперсимплекс (в терминологии \cite{GS}). На рис. 22
показано рассечение трехмерного куба на три куска: нижний тетраэдр, октаэдр и
верхний тетраэдр.

\begin{figure}[htb]
\begin{center}
\includegraphics[scale=0.7]{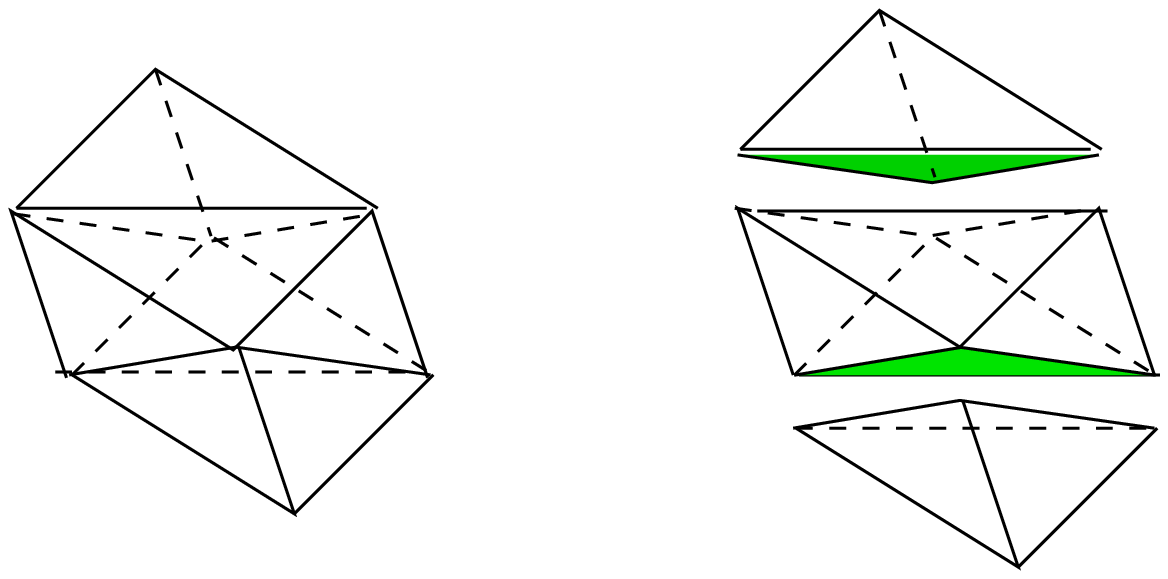}
\end{center}
 \caption{куски}
 \label{fig:22}
  \end{figure}

Получается разбиение зонотопа $Z$, но уже не на кубы, но на более мелкие
куски-гиперсимплексы, которое мы обозначаем $\mathcal Q \!\! \equiv $ и
называем {\em измельчением} кубильяжа      $\mathcal Q $.

Измельченный кубильяж  $\mathcal Q \!\!\! \equiv $ имеет те же вершины, что и
$\mathcal Q $. Но нас больше будут      интересовать `фасеты' $\mathcal Q
\!\!\equiv $, или фасеты кусков. Они бывают `вертикальные' (это куски фасет
кубов кубильяжа $\mathcal Q $) и `горизонтальные', лежащие в некоторой
гиперплоскости $H_k$. (И те, и другие -- гиперсимплексы, только теперь
размерности $d-1$.) Большинство фасет куска $P$ вертикальные, и только две (или
одна,  если кусок -- симплекс) горизонтальные.

\emph{Слабая мембрана} (или $w$-\emph{мембрана}) в кубильяже $\mathcal Q $ --
это подкомплекс $\mathcal W $ измельчения $\mathcal Q $, который при проекции
$\pi _\varepsilon $ вдоль вектора $e_d+\varepsilon e_1$ , биективно
(гомеоморфно) проектируется на  зонотоп (меньшей размерности) $\pi _\varepsilon
(Z)$. Обычные мембраны (из раздела 6, их можно называть сильными) могут
пониматься как слабые. Как и сильные мембраны, слабые мембраны гомеоморфны
($d-1$)-мерному диску, край которого -- это обод зонотопа $Z$  (относительно
$\pi $ или $\pi _\varepsilon $), и они делят  зонотоп $Z$ на две части -- перед
мембраной ($Z_-(\mathcal W )$) и после мембраны  ($Z_+(\mathcal W )$); каждая
из этих частей тоже разбита на куски. Проекции (при $\pi _\varepsilon $) клеток
мембраны дают некоторое разбиение зонотопа $\pi _\varepsilon (Z)$ на
плитки-гиперсимплексы, которое можно было бы назвать \emph{гиперкомби}, по
аналогии с понятием комби в размерности 2, \cite{DKK-17, DKK-18}. Это
интересный объект, но мы      (пока) не будем им заниматься.

Главное отличие слабых мембран от сильных в том, что первые  могут иметь
горизонтальные участки (`выступы' или `балконы'), см. рис. 23 и 24.

\begin{figure}[h]
\begin{center}
\includegraphics[scale=0.2]{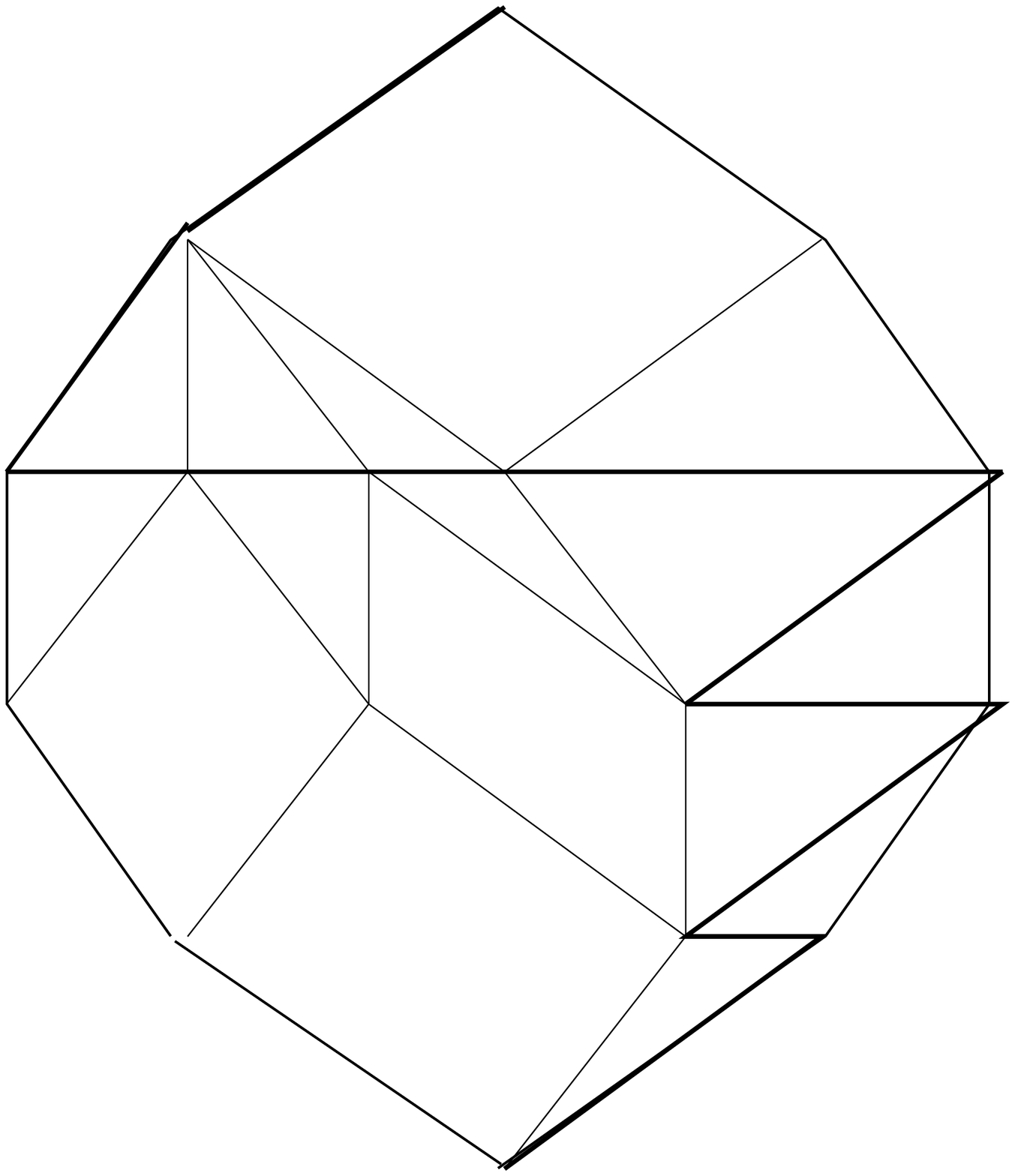}
\end{center}
 \caption{слабая мембрана в ромбическом тайлинге; видны три балкона.}
 \label{fig:23}
  \end{figure}

Простейшие примеры $w$-мембран (назовем их \emph{главными}) получаются
следующим образом. Пусть $\mathcal Q $ -- кубильяж зонотопа $Z=Z(n,d)$. Мы идем
(сверху вниз) сначала по передней (видимой) части границы $Z$ до уровня  $k$,
$0\le k\le n$. Затем идем горизонтально по этому уровню $k$ вплоть до невидимой
(задней) части границы и наконец спускаемся до 0 по задней границе. На рис.
\ref{fig:24} слева изображена главная $w$-мембрана (при $k=2$) для зоногона
$Z(5,2)$, а справа -- для зонотопа $Z(5,3)$ (вид спереди и немного снизу).

\begin{figure}[h]
\begin{center}
\includegraphics[scale=0.2]{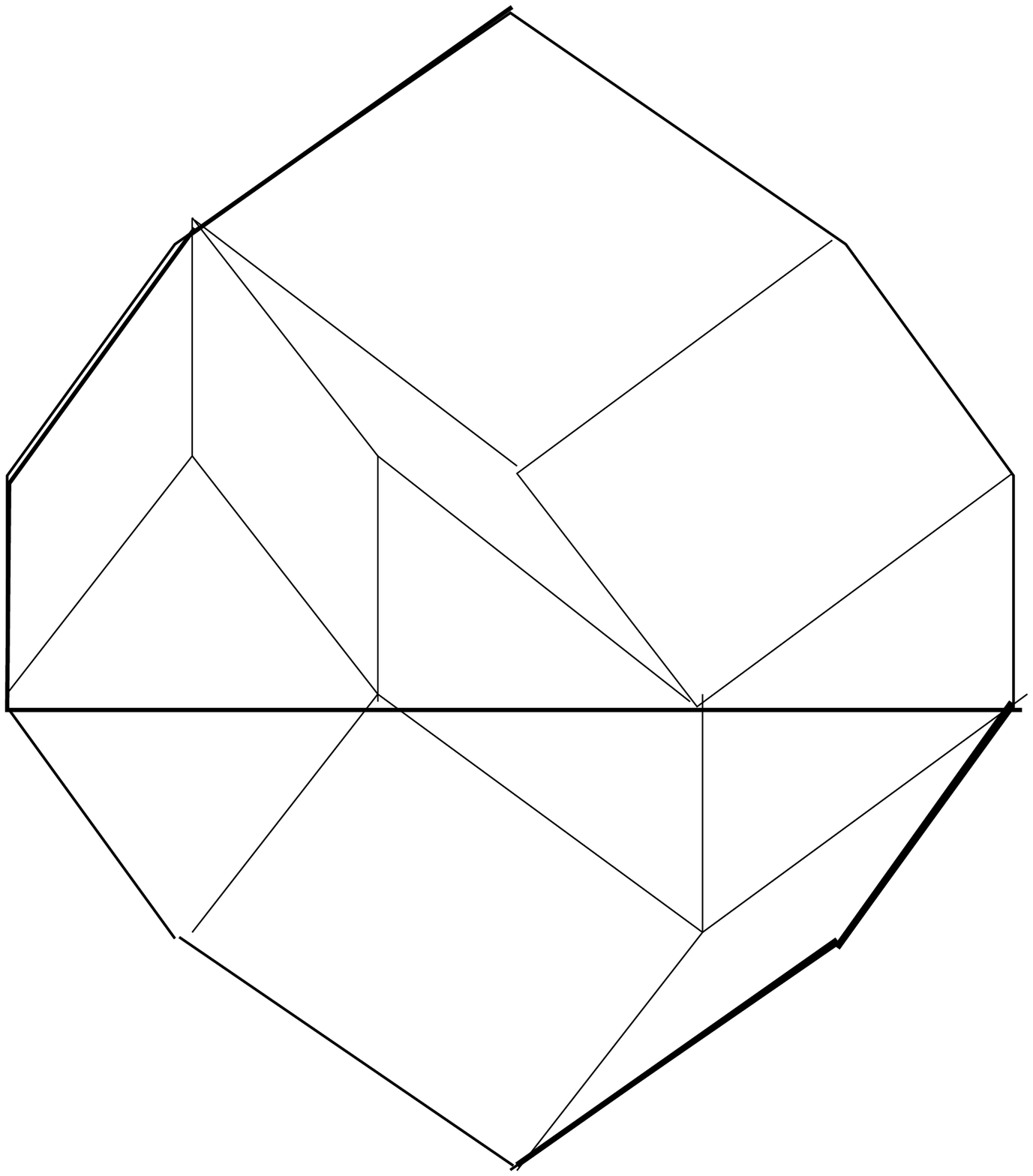}
\hspace{3cm}
\includegraphics[scale=0.2]{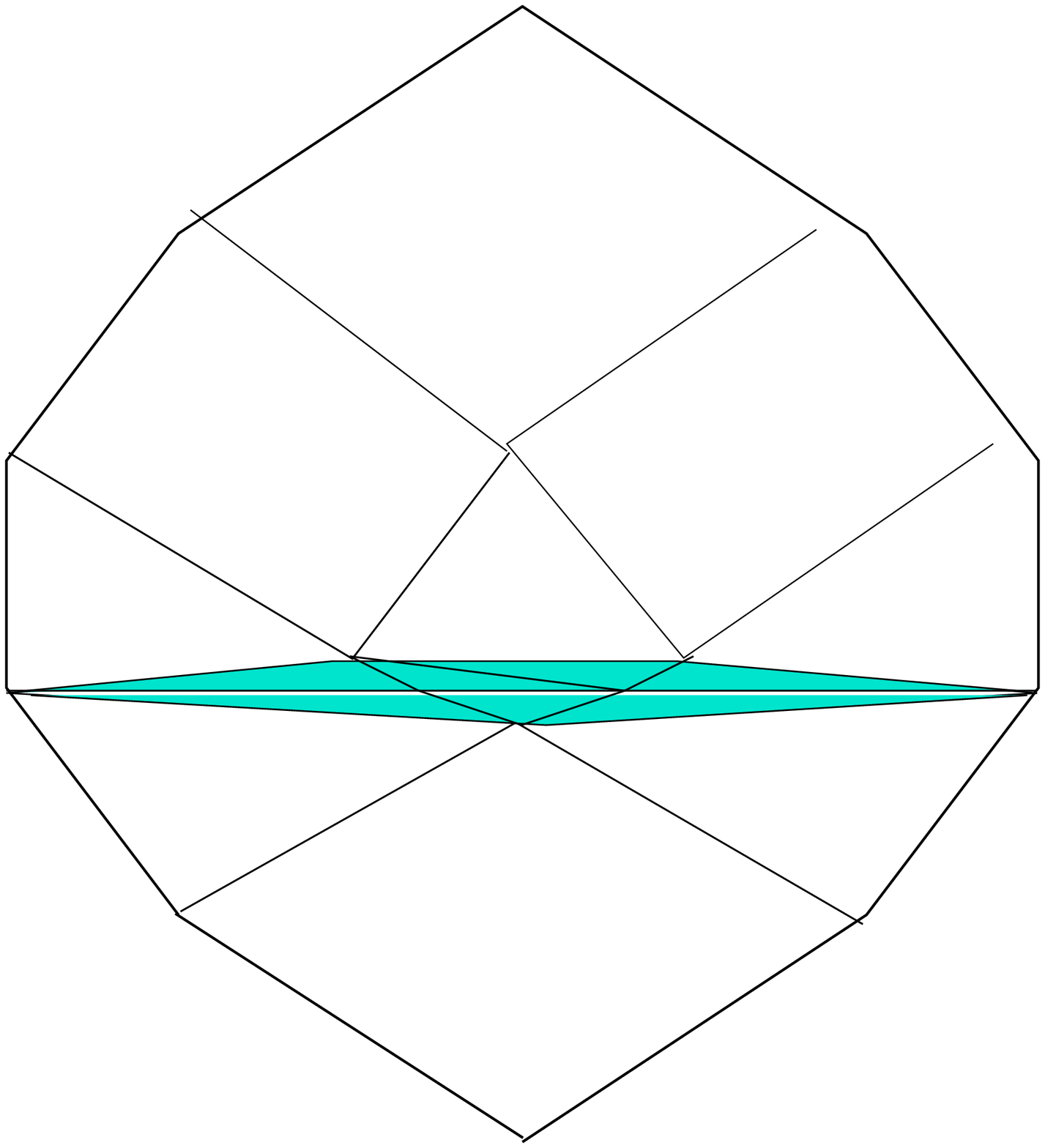}
\end{center}
 \caption{простые мембраны.}
 \label{fig:24}
  \end{figure}

Тут уступы имеются лишь на высоте 2; в общем случае уступы могут  встречаться
на любой высоте. Для тривиального кубильяжа куба $Z(d,d)$ все мембраны главные.

Отметим, что триангуляции циклического политопа из Дополнения 2 можно понимать
как главные слабые мембраны (для $k=1$).

Основная вещь, о которой мы хотим рассказать -- это аналог  естественного
порядка на множестве кусков измельченного кубильяжа $\mathcal Q \!\! \equiv $.
Поступим в точности как в разделе 9. А именно, введем понятие
\emph{непосредственного следования} $\prec $ для кусков. Только проектируем (и,
соотвественно, говорим о видимых и невидимых фасетах кусков) не отображением
$\pi $, а чуть измененным отображением $\pi _\varepsilon $. При этом  заметим
следующее. Пусть кусок $P$ (лежащий в кубе $Q$) непосредственно  предшествует
куску $P'$ (лежащему в кубе $Q'$), $P\prec P'$. Тогда возможны  два случая:

           1) куски $P$ и $P'$ лежат на одном этаже (между гиперплоскостями
$H_k$ и $H_{k+1}$), и тогда $Q\prec Q'$;

           2) куски $P$ и $P'$ разделены гиперплоскостью $H_k$; тогда $Q=Q'$, и $P$ лежит под $P'$.

Это замечание помогает доказать следующий аналог Предложения 6:\medskip

           {\bf Утверждение.} {\em  Отношение $\prec $ на $\mathcal Q\!\! \equiv $ ациклично. }\medskip

В самом деле, пусть $P_1\prec ...\prec P_m$ -- некоторый направленный  путь в
$\mathcal Q \equiv $. И пусть $Q_l$ -- куб кубильяжа  $\mathcal Q $, содержащий
кусок $P_l$. Тогда мы получаем направленный путь $Q_1...Q_m$ в $\mathcal Q $, в
котором  соседние кубы либо связаны отношением $\prec _\mathcal Q $, либо
совпадают. В силу      Предложения 6 раздела 9, если на этом пути встречается
хоть раз отношение $\prec _\mathcal Q $, то $Q_1$ отлично от $Q_m$ и тем  более
$P_1$ отлично  от $P_m$. Поэтому можно считать, что все $Q_l$ одни и те же и
равны      некоторому кубу $Q$. Тогда все куски $P_l$ -- это куски этого куба
$Q$. Теперь утверждение очевидно,  потому что каждый следующий кусок лежит на
этаж выше предыдущего. \hfill $\Box$\medskip

Основываясь на этом утверждении, мы транзитивно замыкаем отношение $\prec $ и
получаем отношение порядка $\preceq $ на  $\mathcal Q \!\!\!\equiv $, которое
снова называем \emph{естественным}. И точно так же, как это делалось в разделе
11, можно ввести понятие стэка (или идеала) для этого порядка и  отождествить
слабые мембраны (для кубильяжа $\mathcal Q $) со стэками. Польза этого простого
наблюдения в том, что стэк `разбирается'  путем удаления некоторого
максимального (относитеольно порядка $\preceq $) элемента (куска). В терминах
слабых мембран мы получаем понятие слабого (понижающего) флопа. Важное
следствие состоит в том, что от любой слабой мембраны до `минимальной' (и уже
сильной) мембраны можно добраться серией таких слабых понижающих флопов. Кроме
того, множество слабых  мембран (в кубильяже $\mathcal Q $) является (как в
Предложении 11)  дистрибутивной решеткой.

Однако на этом аналогии частично обрываются, и ситуация  начинает зависеть от
четности/нечетности $d$. Вернемся к  простейшему кубильяжу куба $Z(d,d)$. У
измельчения такого куба имеется $d$ кусков. И почти у всех кусков (кроме
средних) все вершины лежат на ободе  этого куба. Как мы знаем из раздела 7, не
на ободе находятся только две вершины, обозначенные там как $t$ и $h$ (хвост и
голова). Обе эти вершины принадлежат либо одному `среднему' куску (если $d$
нечетно), либо одному `среднему' сечению (если $d$ четно, см. Рис.
\ref{fig:4}). И это сказывается на числе вершин слабой мембраны. Когда мы
удаляем нецентральный кусок из груды $\mathcal S(\mathcal W )$ домембранных
кусков (делая соотвествующий понижающий флоп), то число вершин слабой мембраны
не меняется. Однако когда мы  удаляем центральные куски, положение усложняется.
Удаление более верхнего куска (при четном $d$) приводит к увеличению числа
вершин мембраны на 1, удаление более низкого -- к уменьшению на 1, а удаление
единственного центрального куска (при нечетном $d$) не меняет числа вершин
мембраны.\medskip

{\bf Вывод.} {\em При нечетном $d$ слабые мембраны имеют одно и то же  число
вершин, равное ${n \choose \le d-1}$. При четном $d$  число вершин у слабых
мембран может меняться (в каких пределах?).}\medskip

Например,  рис.  \ref{fig:23}  изображает тайлинг $Z(5,2)$ cо слабой мембраной,
у которой 12 вершин (вместо `нормальных' шести).\medskip

А теперь вспомним Предложение 22, которое говорит,  что (при нечетном $d$)
размер слабо $(d-2)$-разделенной системы в $[n]$ не превосходит ${n \choose \le
d-1}$. В~\cite{DKK19} доказывается, что \emph{при нечетном $d$ спектр любой
слабой мембраны в $Z(n,d)$  является слабо $(d-2)$-разделенной системой} (ср. с
Предложением 15) . Также есть основания предполагать следующее.
\medskip

\textbf{Гипотеза 2} (ср. с Теоремой 5). \emph{Если $\mathcal{W}$ -- слабо
$(d-2)$-разделенная система размера ${n \choose \le d-1}$, то она реализуется
как спектр некоторой слабой мембраной в зонотопе} $Z(n,d)$.\medskip

Во всяком случае, для $d=3$ эта гипотеза верна.

\addcontentsline{toc}{section}{Дополнение 4. Доказательство ацикличности}

\section*{Дополнение 4. Доказательство ацикличности}

В этом дополнении мы собираемся доказать Предложение 6 об ацикличности
отношения $\prec$ на множестве кубов кубильяжа  $\mathcal{Q}$. На самом деле мы
покажем большее -- что отношение $\prec$ ациклично не только на множестве кубов
фиксированного кубильяжа $\mathcal{Q}$, но на множестве $\mathcal{C}$ всех
кубов всех кубильяжей. (Абстрактным) \emph{кубом} называется произвольный
$d$-мерный куб $C$ в зонотопе $Z(n,d)$, порожденный некоторым набором векторов
в ${\bf V}$  и растущий из целой точки. Такой куб задается указанием своего
корня $v(X)$ ($X\subset [n]$) и своего типа $T\subset [n]$; $T$ имеет размер
$d$ и не пересекается с $X$. Как показано в Предложении 3, такой куб
вписывается в некоторый кубильяж. Множество всех кубов в $Z(n,d)$ обозначается
$\mathcal{C}(n,d)$; очевидно, оно равно  объединению множества всех кубов всех
кубильяжей $Z(n,d)$.

На множестве $\mathcal{C}(n,d)$ введем бинарное отношение $\prec$ ровно так,
как это было сделано в разделе 9. Напомним, что  $Q\prec Q'$, если эти кубы $Q$
и $Q'$ соседние по некоторой общей фасете $F$, причем эта фасета $F$ невидимая
в $Q$ и видимая в $Q'$. Предложение 6 следует из более сильного
утверждения.\medskip

\textbf{Теорема.} \emph{Отношение $\prec$ на множестве $\mathcal{C}(n,d)$ ациклично. }\medskip

Для доказательства теоремы мы выразим комбинаторно отношение $\prec$. Пусть куб
$Q$ равен $(X,T)$, куб $Q'$ --  $(X',T')$,  и   $Q\prec Q'$. Обозначим через
$F$ фасету, по которой эти кубы соседствуют. Комбинаторно эта фасета равна
$(S,J)$, где  $S$  - корень фасеты, а $J$ -- ее тип. Очевидно, $J=T\cap T'$,
размер ее равен $d-1$. Пусть $T=Ji$, $T'=Jk$ для некоторых цветов $i$ и $k$.
Вектор $v_i$ может либо входить в корень $F$, либо выходить из него. В первом
случае $S=Xi$, во втором $X=Si$. Аналогично с $k$.  Так что возможен один из
четырех случаев, изображенных на рисунке 25.

\begin{figure}[htb]
                         \unitlength=1.1mm
\special{em:linewidth 0.4pt}
\linethickness{0.4pt}
\begin{picture}(120.00,55.00)(-5,0)

\put(19.8,20){\line(0,1){15}} \put(20.2,20){\line(0,1){15}}
\put(20,20){\line(-1,1){10}} \put(20,20.2){\vector(-1,1){10}}
\put(10,30){\line(0,1){15}}
\put(10,45){\line(1,-1){10}}
\put(20,35){\line(2,-1){10}}
\put(30,30){\line(0,-1){15}}
\put(30,15){\line(-2,1){10}} \put(30,15.2){\vector(-2,1){10}}

\put(49.8,20){\line(0,1){15}} \put(50.1,20){\line(0,1){15}}
\put(50,20){\line(-1,1){10}} \put(50,20.2){\vector(-1,1){10}}
\put(40,30){\line(0,1){15}}
\put(40,45){\line(1,-1){10}}
\put(50,35){\line(2,1){10}}
\put(60,40){\line(0,-1){15}}
\put(60,25){\line(-2,-1){10}} \put(50,20.2){\vector(2,1){10}}

\put(79.8,20){\line(0,1){15}} \put(80.2,20){\line(0,1){15}}
\put(80,20){\line(-2,-1){10}} \put(80,20.2){\vector(-2,-1){10}}
\put(70,15){\line(0,1){15}}
\put(70,30){\line(2,1){10}}
\put(80,35){\line(2,-1){10}}
\put(90,30){\line(0,-1){15}}
\put(90,15){\line(-2,1){10}} \put(90,15.2){\vector(-2,1){10}}

\put(109.8,20){\line(0,1){15}} \put(110.2,20){\line(0,1){15}}
\put(110,20){\line(-2,-1){10}} \put(100,15.2){\vector(2,1){10}}
\put(100,15){\line(0,1){15}}
\put(100,30){\line(2,1){10}}
\put(110,35){\line(1,1){10}}
\put(120,45){\line(0,-1){15}}
\put(120,30){\line(-1,-1){10}} \put(110,20.2){\vector(1,1){10}}

\put(70,15){\vector(2,1){10}}
\put(90,15){\vector(-2,1){10}}
\put(80.00,17.00){\makebox(0,0)[cc]{$S$}}

\put(15.00,32.00){\makebox(0,0)[cc]{$Q$}}
\put(25.00,25.00){\makebox(0,0)[cc]{$Q'$}}
\put(50.00,17.00){\makebox(0,0)[cc]{$X\!\!=\!\!S\!\!=\!\!X'$}}
\put(18.00,17.00){\makebox(0,0)[cc]{$X\!\!=\!\!S$}}
\put(113.00,17.00){\makebox(0,0)[cc]{$S\!\!=\!\!X'$}}

\put(52.00,29.00){\makebox(0,0)[cc]{$J$}}
\put(82.00,27.00){\makebox(0,0)[cc]{$J$}}
\put(112.00,29.00){\makebox(0,0)[cc]{$J$}}

\put(70.00,12.00){\makebox(0,0)[cc]{$X$}}
\put(100.00,12.00){\makebox(0,0)[cc]{$X$}}

\put(30.00,12.00){\makebox(0,0)[cc]{$X'$}}
\put(90.00,12.00){\makebox(0,0)[cc]{$X'$}}
%\put(120.00,27.00){\makebox(0,0)[cc]{$X'$}}

\put(74.00,19.00){\makebox(0,0)[cc]{$i$}}
\put(86.00,19.00){\makebox(0,0)[cc]{$k$}}
\put(20.00,5.00){\makebox(0,0)[cc]{I}}
\put(50.00,5.00){\makebox(0,0)[cc]{II}}
\put(80.00,5.00){\makebox(0,0)[cc]{III}}
\put(110.00,5.00){\makebox(0,0)[cc]{IV}}
\end{picture}
 \caption{четыре варианта примыкания кубов к стенке $F$ типа $J$}
 \label{fig:25}
  \end{figure}

Мы должны теперь выразить то, что фасета $F$ невидимая в кубе $Q$. Для этого
будем временно считать, что корень фасеты $F$ расположен в нуле. Пусть
$J=\{j_1<j_2<...<j_{d-1}\}$; тогда линейное уравнение
$\det(v_{j_1},...,v_{j_{d-1}}, \cdot)=0$ задает гиперплоскость, содержащую
фасету $F$. То, что эта фасета невидимая в $Q$, означает, что вектор $v_i$
лежит в положительном полупространстве (если он входит в корень $F$) и в
отрицательном полупространстве (если он выходит из корня $F$). То есть в
случаях I и II $\det(v_{j_1},...,v_{j_{d-1}}, v_i)<0$, а в случаях III и IV
этот определитель больше 0.

Симметрично фасета $F$ видимая в кубе $Q'$, если вектор $v_k$ лежит в
положительном полупространстве в случаях II  и IV, и в отрицательном
полупространстве в случаях I  и III. То есть определитель
$\det(v_{j_1},...,v_{j_{d-1}}, v_k)$ должен иметь знак + или - соответственно.

Это возвращает нас к вопросу о знаках определителей, о котором мы рассказали в
разделе 5. А именно,  точки $j_1<j_2<...<j_{d-1}$ на отрезке $[n]$, делят этот
отрезок на $d$ интервалов, которые удобно нумеровать справа налево. Нулевой
интервал состоит из точек-чисел строго больших $j_{d-1}$, первый интервал --
между $j_{d-2}$ и $j_{d-1}$ и так далее; последний интервал состоит из чисел,
строго меньших $j_1$.  Если число $j$ попадает в интервал с четным номером, то
определитель $\det(v_{j_1},...,v_{j_{d-1}}, v_j)$ положителен; если с нечетным
номером -- отрицателен. Можно говорить, что число $j$ \emph{четно} относительно
$J$, если $j$ попадает в интервал с четным номером, и \emph{нечетно} в
противном случае.

Подведем итог. Пусть задана фасета $F$, представленная как пара $(S,J)$, и куб
$Q=(X,T)$, имеющий $F$ своей фасетой, так что $T=Ji$ для некоторого $i$, не
принадлежащего $J$. \emph{Фасета  $F$ невидима в кубе $Q$ тогда и только тогда,
когда выполнено одно из двух:}

a) $X=S$ \emph{и $i$ нечетно относительно} $J$;

b) $X=S-i$ \emph{и  $i$ четно относительно}  $J$.

Симметрично, пусть куб $Q'=(X',T')$ имеет $F$ своей фасетой, так что $T'=Jk$.
\emph{Фасета $F$ видима в кубе $Q$ тогда и только тогда, когда выполнено одно
из двух:}

a') $X'=S$ \emph{и $k$ четно относительно} $J$;

b') $X'=S-k$\emph{ и  $k$ нечетно относительно}  $J$.

Таким образом, $Q\prec Q'$ тогда и только тогда, когда выполнены комбинации: a)
и a')  (случай II рисунка), или a) и b') (случай I рисунка), или  b) и  a')
(случай IV) или b)  и  b')  (случай III).

Итак, мы выразили комбинаторно отношение $\prec $. Теперь приступим к
доказательству теоремы. Для этого разделим все множество $\mathcal{C}$ кубов на
три группы, или уровня, $\mathcal{C}_0$, $\mathcal{C}_1$ и $\mathcal{C}_2$. К
нулевому уровню $\mathcal{C}_0$  мы относим те кубы $(X,T)$, для которых цвет
$n$ не входит ни в $X$, ни в  $T$; к первому уровню (`перегородка цвета $n$')
$\mathcal{C}_1$ мы относим те кубы, для которых $n$ принадлежит типу $T$;
наконец, второй уровень $\mathcal{C}_2$ состоит из кубов, для которых $n\in X$.
Следующее утверждение является ключевым.\medskip

\textbf{Лемма.} \emph{Уровень монотонно не убывает относительно $\prec$.}\medskip

\begin{proof}
Пусть $Q\prec Q'$. Обозначим через  $l$ и $l'$ уровни $Q$ и $Q'$,
соотвественно. Мы должны проверить две вещи. Первая: если $l'=0$, то и $l=0$.
Вторая: если $l=2$, то и $l'=2$.

Начнем со второго утверждения. Допустим, что $l=2$ (то есть $n$ принадлежит
$X$), но $n$ не принадлежит $X'$. Так как в случае a') $X'$ содержит $X$, этот
случай не реализуем и имеет место случай b'). Но тогда $k=n$, а $n$ всегда
четно, так что мы получаем противоречие.

Проверим теперь первое утверждение. Допустим, что $l'=0$, тогда как $l>0$. Это
значит, что $n$ не принадлежит ни $X'$, ни  $T'$, и в то же время принадлежит
либо $X$, либо   $T$. Принадлежать $X$ цвет $n$  не может, как было только что
показано. Таким образом, $n\in T=Si=(T'-k)\cup \{i\}$. Так как $n$ не
принадлежало $T'$, то $i=n$. Но цвет $n$ всегда четен, так что мы попадаем в
случай a), когда $X=S-i=S-n$. Значит $n$ принадлежит $S$ и тем более $X'=Sk$.
Противоречие.
\end{proof}

Приступим к доказательству теоремы. Оно ведется индукцией по $n$. При $n=d$
утверждение теоремы верно, поскольку имеется всего один куб. Будем считать, что
$d<n$.

Предположим, что имеется циклический монотонный путь $Q_0\prec Q_1 \prec ...
\prec Q_N=Q_0$. В силу предыдущей Леммы этот путь целиком располагается на
одном из уровней, нулевом, первом или втором.

Если цикл проходит на уровне 0, то он лежит в зонотопе $Z(n-1,d)$, что
противоречит индуктивному предположению.

Если цикл лежит на втором уровне, то заменим каждый корень $X_i$ на $X_i-n$. Мы
снова получаем циклический путь в зонотопе $Z(n-1.d)$, что невозможно по
индуктивному предположению.

Остается рассмотреть случай, когда циклический путь располагается на уровне 1.
То есть, если $Q_i=(X_i,T_i)$   для  $i=0,...,N$, то $n\in T_i$ для всех $i$. В
этом случае, заменяя каждое множество $T_i$ на `редуцированное'  множество
$\widetilde{T_i}=T_i-n$, мы получим `путь'  $\widetilde{Q_0}, ... ,
\widetilde{Q_N}$ в $Z(n-1,d-1)$, где $\widetilde{Q_i}=(X_i,\widetilde{T_i})$.
Главное замечание состоит в том, что мы снова получаем цикл, хотя и
противоположно ориентированный, что противоречит индуктивному предположению.
Это замечание следует из приводимой ниже Леммы о реверсе.\medskip

\textbf{Лемма о реверсе.} \emph{Пусть для кубов $Q=(X,T)$ и $Q'=(X',T')$ из
$\mathcal{C}(n,d)$ выполнено соотношение $Q\prec Q'$. Предположим, что $n\in T,
T'$, и положим $\widetilde{T}=T-n$, $\widetilde{T'}=T'-n$. Тогда для кубов
$\widetilde{Q}=(X,\widetilde{T})$ и $\widetilde{Q'}=(X',\widetilde{T'})$ в
зонотопе $\widetilde{Z}=Z(n\!-\!1,d\!-\!1)$ выполнено противоположное
соотношение $\widetilde{Q'} \ \widetilde{\prec} \ \widetilde{Q}$, где
$\widetilde{\prec}$ -- соответствующее отношение на множестве
$\mathcal{C}(n-1,d-1)$.}\medskip

\begin{proof}
Пусть $J=T\cap T'$; очевидно, что $J$ также содержит $n$. Положим
$\widetilde{J}=J-n$. Причина реверса состоит в том, что в силу
$J=\widetilde{J}n$ четность какого-либо цвета $i$ относительно множества
$\widetilde{J}$ противоположна четности этого цвета относительно $J$.
\end{proof}

      \end{document}